\numberwithin{equation}{section}
\newtheorem{Theorem}{Theorem}[section]
\newtheorem*{Theorem*}{Theorem}
\newtheorem{Corollary}[Theorem]{Corollary}
\newtheorem{Lemma}[Theorem]{Lemma}
\newtheorem{Proposition}[Theorem]{Proposition}
 { \theoremstyle{definition}
\newtheorem{Definition}[Theorem]{Definition}

\newtheorem{Example}[Theorem]{Example}
\newtheorem{Remark}[Theorem]{Remark} }
\numberwithin{table}{section}
\numberwithin{figure}{section}
\newcommand{\rFs}[5]{\,_{#1}F_{#2} \left( \genfrac{.}{.}{0pt}{}{#3}{#4}
	\ ;#5 \right)}
\begin{document}
\allowdisplaybreaks

\newcommand{\arXivNumber}{2210.03041}

\renewcommand{\PaperNumber}{055}

\FirstPageHeading

\ShortArticleName{Matrix Spherical Functions for $(\mathrm{SU}(n+m),\mathrm{S}(\mathrm{U}(n)\times \mathrm{U}(m)))$: Two Specific Classes}

\ArticleName{Matrix Spherical Functions\\ for $\boldsymbol{(\mathrm{SU}(n+m),\mathrm{S}(\mathrm{U}(n)\times \mathrm{U}(m)))}$:\\ Two Specific Classes}

\Author{Jie LIU}

\AuthorNameForHeading{J.~Liu}

\Address{Institute for Mathematics, Astrophysics and Particle Physics, Radboud University Nijmegen,\\ Heyendaalseweg 135, 6525 AJ Nijmegen, The Netherlands}
\Email{\href{mailto:liujiemath@hotmail.com}{liujiemath@hotmail.com}}

\ArticleDates{Received October 18, 2022, in final form July 13, 2023; Published online August 04, 2023}

\Abstract{We consider the matrix spherical function related to the compact symmetric pair $(G,K)=(\mathrm{SU}(n+m),\mathrm{S}(\mathrm{U}(n)\times\mathrm{U}(m)))$. The irreducible $K$ representations $(\pi,V)$ in the~${\rm U}(n)$ part are considered and the induced representation $\mathrm{Ind}_K^G\pi$ splits multiplicity free. In this case, the irreducible $K$ representations in the ${\rm U}(n)$ part are studied. The corresponding spherical functions can be approximated in terms of the simpler matrix-valued functions. We can determine the explicit spherical functions using the action of a differential operator. We consider several cases of irreducible $K$ representations and the orthogonality relations are also described.}

\Keywords{representation theory; Lie group; special functions}

\Classification{17B10; 22E46; 33C50}

\section{Generalities}\label{subsection:notationsandpreliminaries}
\subsection{Introduction}

There is a close relation between representation theory and special functions. In this paper, we consider explicit matrix-valued polynomials, i.e., matrix spherical functions. We use the notion of spherical functions for symmetric pair in \cite{GangV,Warn-2} taking values in a matrix algebra.

The matrix-valued spherical functions of rank one type have been exploited in several cases. In \cite{KoelvPR-IMRN, KoelvPR-PRIMS}, the matrix-valued spherical functions for symmetric pair $(\mathrm{SU}(2)\times \mathrm{SU}(2),\operatorname{diag})$ arising from Koornwinder \cite{Koor-SIAM1985} are studied. In \cite{GrunPT}, the matrix-valued spherical functions for symmetric pair $(\mathrm{SU}(3),\mathrm{U}(2))$ are studied, and for the more general case for symmetric pair $(\mathrm{SU}(m+1),\mathrm{U}(m))$ they are given in \cite{PaTi, Prui-pp}. The approach in~\cite{PaTi} is to find two different differential operators and the spherical functions are the corresponding eigenfunctions. The approach in~\cite{Prui-pp} is to find the $K$ intertwiner~$j$ by Lemma~\ref{newdef}. The rank two case for symmetric pair $(\mathrm{SU}(2+m),\mathrm{S}(\mathrm{U}(2)\times \mathrm{U}(m)))$ has been studied in~\cite{KoelLiu}. The approximate spherical functions of this case can be related to the Krawtchouk polynomials, and it is also a matrix analogue of Koornwinder's~$BC_2$ orthogonal polynomials in \cite{Koor-IndagM1, Koor-IndagM2, Koor-IndagM3, Koor-IndagM4}. Moreover, in \cite{StokR} it shows the relation to mathematical physics and possible applications.

Scalar-valued spherical functions for symmetric pair $(\mathrm{SU}(n+m),\mathrm{S}(\mathrm{U}(n)\times \mathrm{U}(m)))$ are given by \cite{HeckS}. In this paper, we calculate the matrix spherical functions for the same symmetric pair. The approach to calculating the corresponding matrix spherical functions is motivated by \cite{KoelLiu}.

Now we introduce the contents of this paper. In Sections~\ref{subsec:mulfretri} and~\ref{subsec:sphfunc}, we briefly recall the definition of a multiplicity free triple and the spherical function. In Section \ref{sec:structhe}, we describe the structure theory and the representation theory in more details for the symmetric pair $(G,K)=(\mathrm{SU}(n+m),\mathrm{S}(\mathrm{U}(n)\times \mathrm{U}(m)))$. In Section \ref{sec:sphfunctoA}, we study the spherical function restricted to a subgroup $A$ of $G$, since it uniquely determines the spherical function by the Cartan decomposition. In Section \ref{sec:radialpartofthecasimiroperator}, we calculate the radial part of the Casimir operator since the spherical function is the eigenfunction of this operator. In Section \ref{geninj}, we give the simplest cases and obtain the approximate functions. It is an intermediate step for calculating the corresponding spherical functions in Sections~\ref{exa:aomega1+bomegan} and~\ref{sec:omegasbomegan}. In Section \ref{sec:matrixweight}, we study the orthogonality relations and calculate the matrix weight function.

\subsection{Multiplicity free triples}\label{subsec:mulfretri}
Let $G$ be a compact Lie group and $K$ be a compact subgroup of
$G$. We define $\pi_{\lambda}^{G}$ as the irreducible $G$ representation with the highest weight $\lambda$ and $V_{\lambda}^{G}$ as the corresponding ir\-re\-ducible~$G$ module. Also $\pi_{\mu}^{K}$ and $V_{\mu}^{K}$ can be defined similarly. We define $\big(P_{G}^{+},P_{K}^{+}\big)$ as the set of the highest weight of $(G,K)$, and $\big[\pi_{\lambda}^{G}|_{K}:\pi_{\mu}^{K}\big]$ as the multiplicity of $V_{\mu}^{K}$ in $V_{\lambda}^{G}$ decomposed as $K$ module.

Let $\mu$ be a highest weight of $K$, the triple $(G,K,\mu)$ is a multiplicity free triple if and only if $\big[\pi_{\lambda}^{G}|_{K}:\pi_{\mu}^{K}\big]\leq1$ for all $\lambda\in P_{G}^{+}$. We define
$P_{G}^{+}(\mu)=\big\{\lambda\in P_{G}^{+}\mid \big[\pi_{\lambda}^{G}|_{K}:\pi_{\mu}^{K}\big]=1\big\}$.

\subsection{Spherical functions}\label{subsec:sphfunc}

Now we give some preliminaries of spherical functions. We recall some results from \cite{GangV, Helg-1962, KoelvPR-JFA, Kram}.

Let $\mathbb{C}[G]$ be the algebra of matrix elements of finite-dimensional irreducible representations of the compact group $G=\mathrm{SU}(n+m)$. Then we have an action of $G\times G$ on $\mathbb{C}[G]$ by
\[ [(g_1,g_2)\cdot f](g)=f\big(g_1^{-1}gg_2\big),\qquad g_1,g_2,g\in G, \]
which is the biregular representation. By restriction, $\mathbb{C}[G]$ is a $K\times K$ representation. For a~fixed $K$-representation $\pi_\mu^K\colon K\rightarrow V_\mu^K$ of highest weight $\mu$, we also have $\operatorname{End}\big(V_\mu^K\big)$ as a $K\times K$ representation by
\[(k_1,k_2)\cdot T=\pi_\mu^K(k_1)T\pi_\mu^K\big(k_2^{-1}\big),\qquad k_1,k_2\in K, T\in\operatorname{End}\big(V_\mu^K\big).\]
Then we consider $\mathbb{C}[G]\otimes\operatorname{End}\big(V_\mu^K\big)$ as a space of functions $\widetilde{\Phi}\colon G\rightarrow\operatorname{End}\big(V_\mu^K\big)$, which is a $K\times K$ representation by
\[\big[(k_1,k_2)\cdot \widetilde{\Phi}\big](g)=\pi_\mu^K(k_1)\widetilde{\Phi}\big(k_1^{-1}gk_2\big)\pi_\mu^K\big(k_2^{-1}\big),\qquad k_1,k_2\in K,g\in G.\]

\begin{Definition}\label{def}
A matrix spherical function is an element of $(\mathbb{C}[G]\otimes\operatorname{End}(V_\mu^K))^{K\times K}$, i.e., $(K\times K)$-invariant elements. So $\Phi\colon G\rightarrow\operatorname{End}\big(V_\mu^K\big)$ is a matrix spherical function if
\[\Phi(k_1g k_2)=\pi_\mu^K(k_1)\Phi(g)\pi_\mu^K(k_2)\]
for any $k_1,k_2\in K$ and $g\in G$.
\end{Definition}

Assuming that $(G,K,\mu)$ is a multiplicity free triple, we can associate a matrix spherical function to each $\lambda\in P_G^+(\mu)$.

{\samepage

\begin{Lemma}\label{newdef}
For $\lambda\in P_{G}^{+}(\mu)$, we define a matrix-valued function $\Phi\colon G\rightarrow\operatorname{End}\big(V_{\mu}^{K}\big)$ such that
\[\Phi^{\mu}_{\lambda}(g)=j^{\ast}\circ\pi_{\lambda}^{G}(g)\circ j,\]
where $j$ is a $K$-invariant embedding, i.e., $j\in\textnormal{Hom}_K\big(V_\mu^K,V_\lambda^G\big)$, and $j^\ast$ is the adjoint of $j$. Then~$\Phi^{\mu}_{\lambda}$ is a matrix spherical function.
\end{Lemma}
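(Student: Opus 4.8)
The plan is to verify the transformation rule $\Phi^{\mu}_{\lambda}(k_1 g k_2)=\pi_\mu^K(k_1)\Phi^{\mu}_{\lambda}(g)\pi_\mu^K(k_2)$ directly from the $K$-equivariance of $j$, after first checking that $\Phi^{\mu}_{\lambda}$ is a well-defined element of $\mathbb{C}[G]\otimes\operatorname{End}(V_\mu^K)$. For the latter: the composition $g\mapsto j^{\ast}\circ\pi_\lambda^G(g)\circ j$ maps $V_\mu^K\to V_\lambda^G\to V_\lambda^G\to V_\mu^K$, so it indeed takes values in $\operatorname{End}(V_\mu^K)$; and fixing a basis $\{v_i\}$ of $V_\mu^K$, the $(i,k)$-entry of $\Phi^{\mu}_{\lambda}(g)$ is $\langle\pi_\lambda^G(g)jv_k,jv_i\rangle$, a matrix coefficient of $\pi_\lambda^G$, hence an element of $\mathbb{C}[G]$.

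The heart of the argument is a pair of intertwining identities. I would equip $V_\mu^K$ and $V_\lambda^G$ with $K$-invariant Hermitian inner products, which exist by compactness of $K$ and with respect to which $\pi_\mu^K$ and the restriction $\pi_\lambda^G|_K$ are unitary. By definition $j\in\operatorname{Hom}_K(V_\mu^K,V_\lambda^G)$ means $j\circ\pi_\mu^K(k)=\pi_\lambda^G(k)\circ j$ for all $k\in K$. Taking Hermitian adjoints and using unitarity (so $\pi_\mu^K(k)^{\ast}=\pi_\mu^K(k^{-1})$ and $\pi_\lambda^G(k)^{\ast}=\pi_\lambda^G(k^{-1})$), this gives $\pi_\mu^K(k^{-1})\circ j^{\ast}=j^{\ast}\circ\pi_\lambda^G(k^{-1})$, equivalently $j^{\ast}\circ\pi_\lambda^G(k)=\pi_\mu^K(k)\circ j^{\ast}$ for all $k\in K$. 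Thus both $j$ and $j^{\ast}$ intertwine the relevant $K$-actions.

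With these in hand the transformation rule is immediate: for $k_1,k_2\in K$ and $g\in G$ one writes $\pi_\lambda^G(k_1 g k_2)=\pi_\lambda^G(k_1)\pi_\lambda^G(g)\pi_\lambda^G(k_2)$, so that
\[\Phi^{\mu}_{\lambda}(k_1 g k_2)=j^{\ast}\pi_\lambda^G(k_1)\pi_\lambda^G(g)\pi_\lambda^G(k_2)j=\pi_\mu^K(k_1)\,\big(j^{\ast}\pi_\lambda^G(g)j\big)\,\pi_\mu^K(k_2)=\pi_\mu^K(k_1)\Phi^{\mu}_{\lambda}(g)\pi_\mu^K(k_2),\]
applying the $j^{\ast}$-identity on the left end and the $j$-identity on the right end. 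By Definition~\ref{def} this says precisely that $\Phi^{\mu}_{\lambda}$ is a matrix spherical function.

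There is no genuine obstacle here; the one point requiring a little care is the passage from the equivariance of $j$ to that of $j^{\ast}$, which is exactly where unitarity of the representations restricted to $K$ enters. I would also remark that the multiplicity-free hypothesis $\lambda\in P_G^+(\mu)$ guarantees $j$ is unique up to a scalar, so that $\Phi^{\mu}_{\lambda}$ is canonically associated to $\lambda$ up to normalization; this plays no role in the verification above but justifies the notation.
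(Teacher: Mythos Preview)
Your proof is correct and complete; the paper itself does not give an argument but simply refers to \cite[p.~15]{Prui-IMRN}, where presumably the same standard verification via the intertwining properties of $j$ and $j^\ast$ is carried out. Your additional remarks on why $\Phi^\mu_\lambda$ lands in $\mathbb{C}[G]\otimes\operatorname{End}(V_\mu^K)$ and on the role of unitarity in passing from the equivariance of $j$ to that of $j^\ast$ are exactly the points one needs, and your closing observation about the multiplicity-free hypothesis matches the paper's own comment following the lemma.
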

\begin{proof}
See \cite[p.~15]{Prui-IMRN}.
\end{proof}}

\begin{Remark}
 Note that Definition \ref{def} includes other spherical functions beyond the ones considered in Lemma \ref{newdef}.
\end{Remark}

We recall \cite[Section~2]{KoelvPR-JFA}. Since all
$K$ representations are unitary, we can take $j$ to be unitary. Then $j^\ast\in\textnormal{Hom}_K\big(V_\lambda^G,V_\mu^K\big)$ and $j^\ast\circ j=\mathbb{I}_{V_\mu^K}$ and $j\circ j^\ast$ is a projection operator. Since $\dim_\mathbb{C}\textnormal{Hom}_K\big(V_\mu^K,V_\lambda^G\big)=1$, $\Phi_\lambda^\mu$ is independent of the choice of $j$.

We define such spherical functions in Lemma \ref{newdef} as the zonal spherical functions if $\mu=0$ and $\lambda\in P_{G}^{+}(0)$. We denote the vector space spanned by $\Phi_{\lambda}^{\mu}$'s with $\lambda\in P_{G}^{+}(\mu)$ by $E^{\mu}$, i.e., $E^\mu=\big(\mathbb{C}[G]\otimes\operatorname{End}\big(V_\mu^K\big)\big)^{K\times K}$, and the vector space spanned by zonal spherical functions by~$E^{0}$. Note that $E^\mu$ is a module over $E^0$; multiplying $\Phi\in E^\mu$ with $\phi\in E^0$ gives
\[(\phi\Phi)(k_1g k_2)=\pi_\mu^K(k_1)(\phi\Phi)(g)\pi_\mu^K(k_2)\]
since $\phi$ is bi-$K$-invariant.

\begin{Remark}\label{sphericalfunctionacting}
We choose a basis $v_1,v_2,\dots,v_d$ of $V_\mu^K$ with $d=\dim V_\mu^K$, then for $v\in V_{\mu}^{K}$, we have
\[\Phi_{\lambda}^{\mu}(g)v=\sum_{i=1}^{d} \frac{\big\langle\pi_{\lambda}^{G}(g)j(v),j(v_{i})\big\rangle}{\langle j(v_{i}),j(v_{i})\rangle}v_{i},\]
where $\langle\cdot,\cdot\rangle$ is a $G$-invariant Hermitian inner product.
\end{Remark}

\begin{Remark}[orthogonality]\label{sphfuncortho}
The Schur orthogonality relations for the matrix spherical functions give
\[\big\langle\Phi^{\mu}_{\lambda},\Phi^{\mu}_{\lambda'}\big\rangle=\int_{G} \operatorname{Tr}\big(\Phi^{\mu}_{\lambda}(g)\big(\Phi^{\mu}_{\lambda'}(g)\big)^{\ast}\big)\,{\rm d}g
=\frac{\bigl(\dim V_{\mu}^{K}\bigr)^{2}}{\dim V_{\lambda}^{G}}\delta_{\lambda,\lambda'}.\]
\end{Remark}

The focus of this paper is to calculate the spherical functions $\Phi^{\mu}_{\lambda}$ as explicitly as possible. To make such a function explicit we need to know the embedding $j$ in Lemma~\ref{newdef} explicitly, a~notoriously difficult problem. To narrow down the problem we make several assumptions:\looseness=1
\begin{itemize}\itemsep=0pt
\item Only particular irreducible $K$-representations are considered. The classification in \cite{PezzvP} gives roughly two families of irreducible $K$-representations, one for the first block of $K$ and one for the second.
\item The module structure of $E^\mu$ can be understood on a spectral level, where the spectrum has a product structure $B(\mu)\times\mathbb{N}^{n}$. The set $B(\mu)$ is called the bottom and in the transition from the spherical functions to the orthogonal polynomials, the crucial information is captured by the spherical functions with irreducible $G$-representations from the bottom.\looseness=1
\end{itemize}

Here is the main idea of this paper. Instead of calculating the spherical functions, the spherical functions $\Phi^{\mu}_{\lambda}$ are approximated by functions $Q^{\mu}_{\lambda}$. They are approximations in the following sense,
\[Q^{\mu}_{\lambda}=a_{\lambda}\Phi^{\mu}_{\lambda}\mbox{ $+$ lower order terms},\]
where $a_{\lambda}$ is a non-zero constant. The lower order terms can be described by the partial ordering on the weight lattice for $G$.

If the approximations are known, then the spherical functions can be recovered by means of an extra piece of information, namely that they are eigenfunctions of the quadratic Casimir operator. To fully control this operator, it has to be calculated explicitly which is technically involved.

\section{Structure theory}\label{sec:structhe}

The goal of this section is to describe the structure theory of the compact
symmetric space and to fix notation. We take $G=\mathrm{SU}(n+m)$ and $K=\mathrm{S}(\mathrm{U}(n)\times \mathrm{U}(m))$, where $m\geq n$ and $K$ is the $(2\times 2)$-block diagonal type with ${\rm U}(n)$ in the upper left-hand block and ${\rm U}(m)$ in the down right-hand block. This section is a generalization of \cite[Section 2.1]{KoelLiu}.

Let
\[
L=\left(
 \begin{matrix}
 0 & 0 & \cdots & 1 \\
 0 & \cdots & 1 & 0 \\
 \vdots & \vdots & \vdots & \vdots \\
 1 & 0 & \cdots & 0 \\
 \end{matrix}
 \right)
\] be an $n\times n$ matrix. The abelian subgroup $A$ of $G$ is given by
\begin{gather}
 A=\Bigg\{a_{\mathbf{t}}=\left(
 \begin{matrix}
 X & 0 & Y L \\
 0 & I_{m-n} & 0 \\
 L Y & 0 & L X L \\
 \end{matrix}
\right)\in G,\,
\mathbf{t}=(t_{1},t_{2},\dots,t_{n}), \, t_i\in\mathbb{R}\nonumber\\
\phantom{\mathbf{t}=\Bigg\{ }{}\mid X=\operatorname{diag}(\cos t_{1},\cos t_{2},\dots,\cos t_{n}),\, Y=\operatorname{diag}(\mathrm{i}\sin t_{1},\mathrm{i}\sin t_{2},\dots, \mathrm{i}\sin t_{n})\Bigg\},\label{atinA}
\end{gather}
where $I_{m-n}$ is an $(m-n)\times(m-n)$ identity matrix.

Also $M=Z_K(A)$ is given by
\begin{align*}
 M=\left\{\!\left(
 \begin{matrix}
 Z & 0 & 0 \\
 0 & M_1 & 0 \\
 0 & 0 & L Z L \\
 \end{matrix}
\right)\!\!\in\! K \mid Z=\operatorname{diag}\big({\rm e}^{\mathrm{i}t_1},{\rm e}^{\mathrm{i}t_2},\dots,{\rm e}^{\mathrm{i}t_n}\big),\,
 M_1\!\in\! U(m-n),\det(M)=1\!\right\}\!.\!\!
\end{align*}

The complexification of $G$ is denoted by $G^\mathbb{C}= \mathrm{SL}(m+n,\mathbb{C})$.
The maximal torus $T_{G^\mathbb{C}} \subset G^\mathbb{C}$ is the subgroup of diagonal matrices, and
similarly, the maximal torus $T_{G} \subset G$ is the subgroup of diagonal matrices. Also $K^\mathbb{C}$, $T_{K^\mathbb{C}}$, $M^\mathbb{C}$ and $T_{M^\mathbb{C}}$ are the corresponding complex type. Explicitly,
\begin{gather*}
T_{G^\mathbb{C}}=T_{K^\mathbb{C}}=\biggl\{\operatorname{diag}(t_1,t_2,\dots,t_{n+m})\mid t_i\in\mathbb{C},\prod_{i=1}^{n+m} t_i=1\biggr\},\\
T_{M^\mathbb{C}}=\{m=\operatorname{diag}(t_{1},t_{2},\dots,t_{n},t_{n+1},t_{n+2},\dots,t_{m},t_{n},t_{n-1},\dots,t_{1})\mid t_i\in\mathbb{C},\det(m)=1\}.
\end{gather*}

We define $\epsilon_{i}\colon T_{G^{\mathbb{C}}}\to\mathbb{C}^{\times}$ by $\epsilon_{i}(t)=t_{i}$. The holomorphic characters of $T_{G^{\mathbb{C}}}$ form an abelian group by pointwise multiplication and we use the additive notation for this group. For example,
\[-\epsilon_{i}(t)=t_{i}^{-1},\qquad (\epsilon_{i}-\epsilon_{j})(t)=t_{i}t_{j}^{-1}.\]
We define the orthogonality relation
\[\langle\epsilon_i,\epsilon_j\rangle=\delta_{ij}.\]

We define $\mathfrak{g}$, $\mathfrak{k}$, $\mathfrak{m}$ and $\mathfrak{a}$ as the corresponding complex Lie algebras of $G$, $K$, $M$ and $A$. In this notation $\Phi=\{\epsilon_{i}-\epsilon_{j}\mid 1\le i\ne j\le n+m\}$ is the set of roots of $\mathfrak{g}$ and it is of $A_{n+m-1}$ type. Following Bourbaki \cite[p.~250]{Bour}, we take $\Phi^{+}=\{\epsilon_{i}-\epsilon_{j}\mid 1\le i< j\le n+m\}$ as the set of positive roots for which $\Pi=\{\epsilon_{i}-\epsilon_{i+1}\mid 1\le i\le n+m-1\}$ is a system of simple roots. Note that~$\Phi=\Phi^{+}\cup\Phi^{-}$ is a disjoint union, where $\Phi^{-}=-\Phi^{+}$. We denote $\alpha_{i}=\epsilon_{i}-\epsilon_{i+1}$. The root lattice is denoted by $Q=\bigoplus_{i=1}^{n+m-1}\mathbb{Z}\alpha_{i}$. Let $Q^{+}=\bigoplus_{i=1}^{n+m-1}\mathbb{N}\alpha_{i}$ denote the non-negative integral linear combinations of the simple roots. The partial ordering $\eta\preccurlyeq\tau$ is $\tau-\eta\in Q^+$.

We define the fundamental weights corresponding to these positive roots. We define $\omega_{i}$'s for~$i=1,2,\dots,m+n-1$ as
\[\omega_{i}(\operatorname{diag}(t_{1},t_{2},\dots,t_{m+n}))=t_{1}t_{2}\cdots t_{i},\qquad \operatorname{diag}(t_{1},t_{2},\dots,t_{m+n})\in T_{G^\mathbb{C}}=T_{K^\mathbb{C}}.\]
Then we have
\[P^+_K =\Bigg\{ \sum_{i=1}^{n+m-1} a_i \omega_i\mid a_i\in\mathbb{N}, \,i\neq n, \, a_n\in\mathbb{Z}\Bigg\},\qquad
 P^+_G = \bigoplus_{i=1}^{n+m-1} \mathbb{N} \omega_i.\]

Now we consider the highest weight of $M$-modules. We define $\eta_i$ as the characters of $T_{M^\mathbb{C}}$ by
\[\eta_{i}(\operatorname{diag}(t_{1},t_{2},\dots,t_{n},t_{n+1},t_{n+2},\dots,t_{m},t_{n},t_{n-1},\dots,t_{1}))=t_{1}t_{2}\cdots t_{i}\]
for $i=1,2,\dots,m-1$ and then $\eta_{i}=\omega_{i}|_{T_{M^\mathbb{C}}}$. Then the highest weight of irreducible $M$-modules can be written as
\[\sigma=a_{1}\eta_{1}+a_{2}\eta_{2}+\cdots+a_{m-1}\eta_{m-1}\]
with $a_{1},a_{2},\dots,a_{n}\in\mathbb{Z}$ and $a_{n+1},a_{n+2},\dots,a_{m-1}\in\mathbb{N}$. We define the set of the highest weight of $M$ as $P_{M}^{+}$.

\begin{Remark}\label{weightrootrelation}
We will use the relation given by \cite[p.~250]{Bour}, where
\[\omega_{i}=(\epsilon_{1}+\epsilon_{2}+\cdots+\epsilon_{i})-\frac{i}{m+n}\sum_{k=1}^{m+n} \epsilon_{k}\]
and
\[\rho=\frac{1}{2}\sum_{\alpha\in\Phi^+} \alpha=\sum_{i=1}^{m+n-1} \omega_{i}.\]
\end{Remark}

We consider the relation between the irreducible $K$-module and $G$-module. By Kobayashi~\cite[Theorem 30]{Koba}, see also Deitmar \cite[Theorem 3]{Deit}, we have
\begin{Lemma}\label{GKMmultifree}
For $\mu\in P_{K}^{+}$, if $V_{\mu}^{K}|_{M}$ splits multiplicity free, then $\big[V_{\lambda}^{G}|_{K}:V_{\mu}^{K}\big]\leq1$ for all~${\lambda\in P_{G}^{+}}$.
\end{Lemma}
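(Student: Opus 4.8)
The plan is to deduce this statement directly from Kobayashi's multiplicity-free theorem (cited as \cite[Theorem 30]{Koba}, or Deitmar \cite[Theorem 3]{Deit}), so the task reduces to setting up the correct geometric picture and checking its hypotheses. Recall the shape of that theorem: given a compact group $G$ with closed subgroups $K$ and a further subgroup, restriction is multiplicity free provided a certain visibility or coisotropy condition holds; in the branching-to-subgroup form it says that if $V_\mu^K|_M$ is multiplicity free, then the induced module structure on the spaces $\operatorname{Hom}_K(V_\mu^K, V_\lambda^G)$ is controlled, forcing all these Hom-spaces to be at most one-dimensional. So the first thing I would do is recall precisely which variant of Kobayashi's result is being invoked and state it in the notation of this paper: with $G=\mathrm{SU}(n+m)$, $K=\mathrm S(\mathrm U(n)\times\mathrm U(m))$, and $M=Z_K(A)$ as fixed in Section~\ref{sec:structhe}.

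Next I would make the identification that makes the hypothesis usable. The symmetric pair $(G,K)$ has rank $n$, with $A$ the abelian subgroup of \eqref{atinA} and $M=Z_K(A)$ its centralizer. By the Cartan decomposition $G=KAK$, a function on $G$ that is $\pi_\mu^K$-spherical on both sides is determined by its restriction to $A$, and on $A$ it must be $M$-equivariant for the adjoint-type action of $M$ on $\operatorname{End}(V_\mu^K)$; equivalently, $\dim(\mathbb C[G]\otimes\operatorname{End}(V_\mu^K))^{K\times K}$ is governed by the branching of $V_\mu^K$ to $M$. This is the standard reduction (it is implicit in the discussion of $E^\mu$ and will reappear in Section~\ref{sec:sphfunctoA}). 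The key algebraic input is then: for any $\lambda\in P_G^+$,
\[
\big[V_\lambda^G|_K : V_\mu^K\big] \;=\; \dim \operatorname{Hom}_K\big(V_\mu^K, V_\lambda^G\big),
\]
and the span of these multiplicity spaces, as $\lambda$ ranges over $P_G^+(\mu)$, assembles into the $E^0$-module $E^\mu$. Kobayashi's theorem tells us that when $V_\mu^K|_M$ splits multiplicity free, the relevant Gelfand-pair / visible-action condition for the triple is satisfied, and hence each of these multiplicities is $\le 1$.

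Concretely, the steps in order: (1) quote the exact form of Kobayashi's theorem, specialized to a symmetric pair, which says that the triple $(G,K,\mu)$ is multiplicity free whenever the $M$-restriction of $\pi_\mu^K$ is multiplicity free; (2) verify the ambient structural hypotheses of that theorem for our $(G,K)$, namely that $(G,K)$ is a compact symmetric pair, that $M=Z_K(A)$ with $A$ a maximal abelian subspace of the $-1$-eigenspace, and that $K/M$ (or the relevant flag-type variety) carries the visible/coisotropic action required --- all of which hold because $(G,K)=(\mathrm{SU}(n+m),\mathrm S(\mathrm U(n)\times\mathrm U(m)))$ is a genuine Riemannian symmetric pair of compact type; (3) conclude that $\big[V_\lambda^G|_K:V_\mu^K\big]\le 1$ for every $\lambda\in P_G^+$, which is exactly the assertion.

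The only real obstacle is bookkeeping rather than mathematics: matching the hypothesis ``$V_\mu^K|_M$ is multiplicity free'' in our normalization (where $M$ has the explicit block form recorded above, with the $\mathrm U(m-n)$ factor and the determinant-one constraint) to the hypothesis as stated in \cite{Koba, Deit}, and making sure the symmetric-pair specialization of Kobayashi's general criterion is the right one. Once the dictionary between $M=Z_K(A)$ here and the subgroup appearing in Kobayashi's statement is pinned down, the proof is a direct citation. Accordingly, the proof in the paper reads: apply \cite[Theorem 30]{Koba} (equivalently \cite[Theorem 3]{Deit}) to the symmetric pair $(G,K)$ with $M=Z_K(A)$.
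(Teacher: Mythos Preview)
Your proposal is correct and matches the paper's approach exactly: the paper states this lemma without a proof environment, simply prefacing it with ``By Kobayashi~\cite[Theorem 30]{Koba}, see also Deitmar \cite[Theorem 3]{Deit}, we have'', so the entire argument is a direct citation. Your write-up is more detailed about verifying the structural hypotheses, but the substance is identical.
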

In this paper, we always consider the situation of $V_\mu^K|_M$ splitting multiplicity free.

\subsection[The case of mu=a omega\_1+b omega\_n]{The case of $\boldsymbol{\mu=a\omega_1+b\omega_n}$}\label{caseofaomega1bomegan}
\begin{Lemma}\label{deitgkmmultifreeaomega1}
Let $\mu=a\omega_{1}+b\omega_{n}\in P_{K}^{+}$ for $a,b\in\mathbb{N}$, then $\big[V_{\lambda}^{G}|_{K}:V_{\mu}^{K}\big]\leq1$ for all $\lambda\in P_{G}^{+}$. Moreover, we have $V_{\mu}^{K}\cong S^{a}\mathbb{C}^{n}\otimes \big(\bigwedge^{n}\mathbb{C}^{n}\big)^{\otimes b}$, and each weight vector of $V_{\mu}^{K}$
\[v_{\sigma}=x_{1}^{a_{1}}x_{2}^{a_{2}}\cdots x_{n}^{a_{n}}\otimes(e_{1}\wedge e_{2}\wedge\cdots\wedge e_{n})^{\otimes b},\qquad a_{1}+a_{2}+\cdots+a_{n}=a\]
generates a $1$-dimensional $M$-module, which corresponds to inequivalent $M$-types.
\end{Lemma}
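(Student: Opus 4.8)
The plan is to verify three separate assertions: the multiplicity-free statement, the identification of $V_\mu^K$ as an explicit tensor product, and the claim about weight vectors generating inequivalent one-dimensional $M$-modules.

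\emph{Step 1: the multiplicity-free bound.} By Lemma~\ref{GKMmultifree}, it suffices to show that $V_\mu^K|_M$ splits multiplicity free. Recalling that $\mu=a\omega_1+b\omega_n$ corresponds to an irreducible ${\rm U}(n)$-representation (tensored with a character coming from the determinant in the second block), and that $M$ contains the torus $Z=\operatorname{diag}({\rm e}^{{\rm i}t_1},\dots,{\rm e}^{{\rm i}t_n})$ in its first block, I would argue that the weights of $V_\mu^K$ restricted to this torus are already all distinct. Since $\omega_n$ restricted to the first block is (a multiple of) the determinant character, it contributes the same shift to every weight; so the question reduces to whether the weights of $S^a\mathbb{C}^n$ are distinct as characters of the diagonal torus of ${\rm U}(n)$, which they manifestly are (monomials $x_1^{a_1}\cdots x_n^{a_n}$ with $\sum a_i = a$ have pairwise distinct exponent vectors). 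Hence the $M$-decomposition is multiplicity free and Lemma~\ref{GKMmultifree} applies.

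\emph{Step 2: the explicit module.} The representation $\pi_\mu^K$ with $\mu = a\omega_1 + b\omega_n$: the fundamental weight $\omega_1$ restricted to ${\rm U}(n)$ gives the standard representation $\mathbb{C}^n$, so $a\omega_1$ gives its $a$-th symmetric power $S^a\mathbb{C}^n$; the fundamental weight $\omega_n$ restricted to ${\rm U}(n)$ is $\bigwedge^n\mathbb{C}^n$ (the top exterior power, a one-dimensional determinant space), so $b\omega_n$ contributes $\big(\bigwedge^n\mathbb{C}^n\big)^{\otimes b}$. One must check this is consistent with the $\det(M)=1$ constraint on $K$, i.e., that the character on the second ${\rm U}(m)$-block is determined; but since $b\omega_n$ is a genuine fundamental weight of ${\rm SU}(n+m)$, the full $K$-action is pinned down and the tensor product $S^a\mathbb{C}^n\otimes\big(\bigwedge^n\mathbb{C}^n\big)^{\otimes b}$ is the correct $K$-module (with $M_1\in U(m-n)$ acting trivially). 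This is essentially a bookkeeping computation with fundamental weights and the branching ${\rm SU}(n+m)\downarrow K$.

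\emph{Step 3: the weight vectors and $M$-types.} A basis of $S^a\mathbb{C}^n$ is given by the monomials $x_1^{a_1}\cdots x_n^{a_n}$ with $a_1+\cdots+a_n=a$, and tensoring with the fixed generator $(e_1\wedge\cdots\wedge e_n)^{\otimes b}$ gives the asserted basis of $V_\mu^K$. Each such $v_\sigma$ is a weight vector for $T_{K^\mathbb{C}}$, hence in particular for $T_{M^\mathbb{C}}\subset M$; since $M = Z\cdot(\{1\}\times U(m-n)\times\{1\})$ up to the determinant condition and the $U(m-n)$-factor acts trivially on $V_\mu^K$, the line $\mathbb{C}v_\sigma$ is $M$-stable, i.e., a one-dimensional $M$-module. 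Its character is read off from the $Z$-action: $v_\sigma$ has $Z$-weight $\sum_i a_i\epsilon_i$ shifted by the $b\omega_n$-contribution, and these characters are pairwise distinct as $\sigma$ ranges over the monomials by the same distinctness argument as in Step~1; hence the $M$-types are inequivalent. I expect the main obstacle to be nothing deep but rather getting the normalization of the determinant characters exactly right — tracking how $\omega_n$ and the $\det = 1$ relation interact across the two blocks of $K$ and matching this with the formulas for $\eta_i$ and $P_M^+$ stated above — so that the claimed $M$-types land in $P_M^+$ and are genuinely inequivalent.
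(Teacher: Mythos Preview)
Your proposal is correct and follows essentially the same approach as the paper: both reduce the multiplicity-free claim to Lemma~\ref{GKMmultifree} by showing that $V_\mu^K|_M$ is multiplicity free, which is done by computing the action of the diagonal torus $\operatorname{diag}(t_1,\dots,t_n)$ in $M$ on each $v_\sigma$ (the paper records $\pi(m)v_\sigma = t_1^{a_1+b}\cdots t_n^{a_n+b}v_\sigma$) and observing that these characters are pairwise distinct. The paper's proof is terser and does not separately justify the identification $V_\mu^K\cong S^a\mathbb{C}^n\otimes(\bigwedge^n\mathbb{C}^n)^{\otimes b}$ or the triviality of the $U(m-n)$-action, treating these as immediate, whereas you spell them out; but the substance is the same.
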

\begin{Remark}
The first statement is also proved in \cite[p.~26, Table B.2.2]{PezzvP} by the case $P=\{\beta_1\}$.
\end{Remark}
\begin{proof}
For $m\in M$ of the form
\[\left(
 \begin{matrix}
 \operatorname{diag}(t_{1},t_{2},\dots,t_{n}) & 0 \\
 0 & \ast \\
 \end{matrix}
\right)
=m\in M,\]
we have
\[\pi(m)v_{\sigma}=t_{1}^{a_{1}+b}t_{2}^{a_{2}+b}\cdots t_{n}^{a_{n}+b}v_{\sigma}.\]
So $v_{\sigma}$ generates a $1$-dimensional $M$-module and the corresponding characters are all different. The result follows from Lemma \ref{GKMmultifree}.
\end{proof}

Now we calculate $P_G^+(\mu)$. The case of $n=2$ is given by \cite{KoelLiu}. By the case $P=\{\beta_1\}$ in \cite[p.~26, Table B.2.2]{PezzvP} with $n>2$, we define
\begin{gather*}
\xi_{i}=(\lambda_{i},0),\quad i=1,2,\dots,n-1,\\ \xi_{n}=(\omega_{1},\omega_{1}),\qquad\xi_{n+1}=(\omega_{2}+\omega_{m+n-1},\omega_{1}),\qquad\dots,\qquad
\xi_{2n-2}=(\omega_{n-1}+\omega_{m+2},\omega_{1}),\\
\xi_{2n-1}=(\omega_{m+1},\omega_1-\omega_n),\qquad\xi_{2n}=(\omega_{m},-\omega_{n}),\qquad\xi_{2n+1}=(\omega_{n},\omega_{n}).
\end{gather*}
Pezzini and van Pruijssen \cite{PezzvP} define the extended weight monoid, which consists of the pairs
\[(\tau_{1},\tau_{2})=\sum_{i=1}^{2n+1} c_{i}\xi_{i},\qquad c_{i}\in\mathbb{N}.\]
So $P^+_G(\mu)$ consists of those pairs $(\tau_1,\tau_{2})$ for which $\tau_2=\mu=a\omega_1+b\omega_n$. Note that we have multiplied the second entry of the pairs of the extended weight monoid by $-1$ in comparison to~\cite{PezzvP}.

For the special case $\mu=0$ we obtain the description of the spherical representation $P_{G}^{+}(0)=\bigoplus_{i=1}^{n} \mathbb{N}(\omega_i+\omega_{n+m-i})$ for symmetric pair $(G,K)$ as proved by Kr\"{a}mer \cite{Kram}. Note that $\lambda_i=\omega_i+\omega_{n+m-i}$ with $i=1,2,\dots,n$ are the generators of the spherical weights and they can be written in terms of simple roots as follows:
\begin{gather*}
\lambda_{1}=\omega_{1}+\omega_{m+n-1}=\sum_{j=1}^{m+n-1} \!\alpha_{j},\qquad
\lambda_{i}=\omega_{i}+\omega_{m+n-i}=\lambda_{i-1}+\sum_{j=i}^{m+n-i} \!\alpha_{j},\quad i=2,3,\dots,n.
\end{gather*}
Let $\lambda_{\rm sph}=\sum_{i=1}^{n}d_{i}\lambda_i\in P_G^+(0)$, we define $|\lambda_{\rm sph}|=\sum_{i=1}^{n}d_{i}$.

There is a trivial representation $V_{\omega_{0}}^{K}\subset V_{\omega_{i}+\omega_{m+n-i}}^{G}$ for $i=0,1,\dots,n$ with convention $\omega_0=\omega_{n+m}=0$ such that $\big[V_{\omega_{i}+\omega_{m+n-i}}^{G}|_{K},V_{\omega_{0}}^{K}\big]=1$, and we define the corresponding vector $v_{i}\in V_{\omega_{0}}^{K}$ as the $K$-fixed vector. Also we rewrite $\Phi^{0}_{\lambda_{i}}$ as $\phi_{i}$ and it is bi-$K$-invariant.
\begin{Proposition}\label{prop:P+Gmuofrankn} For the multiplicity free triple $(G,K, \mu)$ with $\mu=a\omega_{1}+b\omega_{n}$, $a,b\in \mathbb{N}$, we have $P^+_G(\mu)=B(\mu) + P^+_G(0)$, where
\[B(\mu) =
\Bigg\{ \nu = \sum_{i=1}^n a_{i} (\omega_{i}+\omega_{m+n+1-i})+b\omega_{n}
\mid \sum_{i=1}^n a_{i}=a,\, a_{i}\in\mathbb{N}\Bigg\}.\]
with the convention $\omega_{0}=\omega_{m+n}=0$.
\end{Proposition}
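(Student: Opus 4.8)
The plan is to compute $P_G^+(\mu)$ directly from the extended weight monoid description of Pezzini and van Pruijssen that has just been set up, and then to match the result with $B(\mu)+P_G^+(0)$. Concretely, $P_G^+(\mu)$ consists of those $(\tau_1,\tau_2)=\sum_{i=1}^{2n+1}c_i\xi_i$ with $c_i\in\mathbb{N}$ for which the second entry equals $\mu=a\omega_1+b\omega_n$. So the first task is to understand the equation $\tau_2=\mu$ in the $c_i$'s: the second entries of the generators $\xi_i$ are, respectively, $0$ (for $i=1,\dots,n-1$), then $\omega_1$ (for $i=n,\dots,2n-2$), then $\omega_1-\omega_n$, then $-\omega_n$, then $\omega_n$. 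Hence, writing $c_n+c_{n+1}+\cdots+c_{2n-2}+c_{2n-1}=:a'$ and $-c_{2n-1}-c_{2n}+c_{2n+1}=:b'$, the constraint $\tau_2=\mu$ is exactly $a'=a$ and $b'=b$, with no constraint on $c_1,\dots,c_{n-1}$.

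Next I would split the free parameters into a ``spherical part'' and a ``bottom part.'' The variables $c_1,\dots,c_{n-1}$ contribute $\sum_{i=1}^{n-1}c_i\lambda_i$ to the first entry with $\lambda_i=\omega_i+\omega_{m+n-i}$ (using $\xi_i=(\lambda_i,0)$), and $c_{2n+1}$ contributes $c_{2n+1}\lambda_n=c_{2n+1}(\omega_n+\omega_m)$; but $c_{2n+1}$ is tied up in the relation $b'=b$. The key reorganization is to peel off $\min$-type combinations so as to exhibit a copy of $P_G^+(0)$: I expect that after absorbing the contribution $\sum_{i=1}^{n-1}c_i\lambda_i$ and an appropriate multiple of $\lambda_n$ into the $P_G^+(0)$ summand, what is left of the first entry is a sum $\sum_{i=1}^{n}a_i(\omega_i+\omega_{m+n+1-i})+b\omega_n$ with $\sum a_i=a$ — that is, exactly $B(\mu)$. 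Here one should note the index shift: the generators appearing in $B(\mu)$ are $\omega_i+\omega_{m+n+1-i}$, which are the first entries of $\xi_n,\xi_{n+1},\dots,\xi_{2n-2}$ together with $\xi_{2n+1}$ once the $\omega_n$'s are bookkept, and the condition $\sum_{i=1}^n a_i=a$ is precisely $c_n+\cdots+c_{2n-1}=a$ after renaming. So concretely I would (i) show $B(\mu)+P_G^+(0)\subseteq P_G^+(\mu)$ by exhibiting, for each $\nu\in B(\mu)$ and each $\lambda_{\mathrm{sph}}\in P_G^+(0)$, nonnegative $c_i$'s realizing $(\nu+\lambda_{\mathrm{sph}},\mu)$; and (ii) show the reverse inclusion by starting from arbitrary $c_i\ge0$ with $\tau_2=\mu$, and rewriting $\tau_1$ as (element of $B(\mu)$) $+$ (element of $P_G^+(0)$), using the identities for $\lambda_i$ in terms of simple roots only as needed to confirm membership.

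The main obstacle I anticipate is the bookkeeping around $\omega_n$ and the generator $\xi_{2n-1}=(\omega_{m+1},\omega_1-\omega_n)$, whose second entry mixes $\omega_1$ and $-\omega_n$: one must check that whenever $c_{2n-1}>0$ its $-\omega_n$ contribution to $\tau_2$ is compensated, either by $c_{2n+1}$ (which adds $+\omega_n$ to $\tau_2$ and $\lambda_n$ to $\tau_1$, landing in $P_G^+(0)$) or is simply part of how $b$ is assembled, and correspondingly that its first-entry contribution $\omega_{m+1}$ fits the claimed shape. In particular I expect the nontrivial direction to be the reverse inclusion, where one has to argue that the decomposition into $B(\mu)$ plus $P_G^+(0)$ can always be done with all coefficients nonnegative; this is a linear-algebra/combinatorics argument on the cone generated by the $\xi_i$, and the content is precisely that the ``$\lambda_i$-directions'' $\omega_i+\omega_{m+n-i}$ generating $P_G^+(0)$ can be extracted without going negative in the remaining coordinates. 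Everything else — the weight identities, the reindexing $\omega_{m+n-i}$ versus $\omega_{m+n+1-i}$, the use of $\det=1$ conventions — is routine once this cone decomposition is pinned down.
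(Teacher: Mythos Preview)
Your approach is essentially the paper's: both read $P_G^+(\mu)$ off the extended weight monoid by imposing $\tau_2=a\omega_1+b\omega_n$, obtain that $c_1,\dots,c_{n-1}$ are free (giving $\sum_{i=1}^{n-1}c_i\lambda_i$) and that the remaining constraints are $\sum_{i=n}^{2n-1}c_i=a$ and $-c_{2n-1}-c_{2n}+c_{2n+1}=b$, and then reorganize the first entry into an element of $B(\mu)$ plus a spherical weight. One slip: $\xi_{2n+1}=(\omega_n,\omega_n)$, so $c_{2n+1}$ contributes $c_{2n+1}\omega_n$ to the first entry, not $c_{2n+1}\lambda_n$; the missing $\omega_m$ comes from $\xi_{2n}$, and the paper's shortcut for the bookkeeping you anticipate is precisely the identity $\xi_{2n}+\xi_{2n+1}=(\lambda_n,0)$, which lets one assume $c_{2n}c_{2n+1}=0$ and hence (since $b\ge 0$) take $c_{2n}=0$, $c_{2n+1}=b+c_{2n-1}$, after which the relabelling $a_j=c_{n+j-1}$ gives $B(\mu)$ on the nose without a separate two-inclusion argument.
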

\begin{proof}
We prove this proposition by straightforward calculation to find out all the $\lambda\in P_{G}^{+}$ satisfying
\[(\lambda,a\omega_{1}+b\omega_{n})=\sum_{i=1}^{2n+1} c_{i}\xi_{i},\qquad c_{i}\in\mathbb{N}.\]
It leads to $c_i$ being free for $1\leq i\leq n-1$, which corresponds to $\sum_{i=1}^{n-1}c_i\lambda_i$, a spherical weight. The non-trivial remaining conditions are
\[\sum_{i=n}^{2n-1} c_i=a,\qquad
-c_{2n-1}-c_{2n}+c_{2n+1}=b. \]
Since $\xi_{2n}+\xi_{2n+1}=(\lambda_n,0)$ leads to the remaining generator of the spherical weights $P_G^+(0)$, we can additionally assume $c_{2n}c_{2n+1}=0$ in order to determine $B(\mu)$.

Since $b\in\mathbb{N}$ we need to take $c_{2n}=0$ and $B(\mu)$ is described by $\sum_{i=n}^{2n-1}c_i=a$, $c_{2n+1}=b+c_{2n-1}$. Relabelling gives the result and the proposition is proved.
\end{proof}
\begin{Remark}
Note that for any $\lambda\in P_G^+(\mu)$, we have $\lambda\succcurlyeq\mu$.
\end{Remark}
\subsection[The case of mu=omega\_s+b omega\_n]{The case of $\boldsymbol{\mu=\omega_s+b\omega_n}$}\label{subsec:omegas}
Let $1<s<n$ and $b\in\mathbb{N}$. The goal of this subsection is to give some preliminaries for the method to calculate $P_G^+(\omega_s+b\omega_n)$ in Section \ref{sec:omegasbomegan}.
\begin{Lemma}\label{deitgkmmultifree}
Let $\mu=\omega_{s}+b\omega_n\in P_{K}^{+}$, then $\big[V_{\lambda}^{G}|_{K}:V_{\mu}^{K}\big]\leq1$ for all $\lambda\in P_{G}^{+}$.
\end{Lemma}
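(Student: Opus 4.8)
The plan is to reduce this statement to Lemma~\ref{GKMmultifree}, exactly as was done for the family $\mu=a\omega_1+b\omega_n$ in Lemma~\ref{deitgkmmultifreeaomega1}. That is, it suffices to show that $V_{\mu}^{K}|_{M}$ splits multiplicity free when $\mu=\omega_s+b\omega_n$ with $1<s<n$ and $b\in\mathbb{N}$; the conclusion $\big[V_{\lambda}^{G}|_{K}:V_{\mu}^{K}\big]\le 1$ then follows immediately from Kobayashi's theorem as quoted.

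To carry this out, first I would identify the $K={\rm S}({\rm U}(n)\times{\rm U}(m))$-module $V_{\mu}^{K}$ concretely. Since the weight $\mu=\omega_s+b\omega_n$ is supported on the first block, the representation is (up to a central twist by the $b$-th power of the determinant character of the ${\rm U}(n)$-block, coming from $b\omega_n$, which restricts to a character of $M$) the fundamental representation $\bigwedge^s\mathbb{C}^n$ of ${\rm U}(n)$ tensored with the trivial ${\rm U}(m)$-module, i.e. $V_{\mu}^{K}\cong \bigwedge^s\mathbb{C}^n\otimes\big(\bigwedge^n\mathbb{C}^n\big)^{\otimes b}$. A basis of weight vectors is given by $e_{i_1}\wedge e_{i_2}\wedge\cdots\wedge e_{i_s}\otimes(e_1\wedge\cdots\wedge e_n)^{\otimes b}$ for $1\le i_1<i_2<\cdots<i_s\le n$. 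Next I would restrict to $M=Z_K(A)$, whose identity component contains the diagonal torus $\operatorname{diag}(t_1,\dots,t_n)$ in the ${\rm U}(n)$-block (with the matching entries in the lower block and an ${\rm U}(m-n)$ factor acting trivially on $V_\mu^K$). On the basis vector indexed by $\{i_1,\dots,i_s\}$ this torus acts by the character $t_{i_1}t_{i_2}\cdots t_{i_s}\cdot(t_1\cdots t_n)^{b}$. Since distinct $s$-element subsets of $\{1,\dots,n\}$ give distinct monomials $t_{i_1}\cdots t_{i_s}$, all these characters of $T_{M}$ are pairwise inequivalent, so $V_{\mu}^{K}|_{T_M}$ — hence a fortiori $V_{\mu}^{K}|_{M}$ — is multiplicity free, and each weight line is an irreducible $M$-submodule. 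Applying Lemma~\ref{GKMmultifree} finishes the proof.

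The only genuine subtlety, and the step I would be most careful about, is the justification that $V_\mu^K$ really is $\bigwedge^s\mathbb{C}^n$ (twisted) and that the whole of $M$—not just its maximal torus—acts by characters on each weight line. For the torus part the argument above is complete; the non-torus part of $M$ is the ${\rm U}(m-n)$ block together with the determinant constraint, and since $V_\mu^K$ is built entirely from $\mathbb{C}^n$ (the first block) the ${\rm U}(m-n)$-block acts trivially, so there is nothing more to check: the $M$-action on each weight line is through a character and the decomposition is automatically into irreducibles. One should also record that the $b\omega_n$ twist only multiplies every character by the same fixed character $(t_1\cdots t_n)^b$, hence does not affect whether the decomposition is multiplicity free. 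Thus the argument is a direct structural computation with no real obstacle, running entirely parallel to the proof of Lemma~\ref{deitgkmmultifreeaomega1}.
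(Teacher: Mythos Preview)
Your proposal is correct and follows essentially the same route as the paper's own proof: identify $V_{\mu}^{K}\cong\bigwedge^{s}\mathbb{C}^{n}\otimes(\bigwedge^{n}\mathbb{C}^{n})^{\otimes b}$, observe that each basis vector $e_{h_1}\wedge\cdots\wedge e_{h_s}\otimes(e_{\mathrm{N}})^{\otimes b}$ spans a one-dimensional $M$-module with a distinct character, and then invoke Lemma~\ref{GKMmultifree}. Your write-up is in fact more explicit than the paper's about why the ${\rm U}(m-n)$ block acts trivially and why distinct subsets yield distinct characters, but the argument is the same.
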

\begin{proof}
Let $H=\{h_1,h_2,\dots,h_s\}\subset\{1,2,\dots,n\}$ be an ordered tuple with $h_i<h_{i+1}$. Since $V_{\omega_s+b\omega_n}^K\cong \bigwedge^s\mathbb{C}^n\otimes \big(\bigwedge^n\mathbb{C}^n\big)^{\otimes b}$, the weight vector in $V_\mu^K$ can be written as $e_{h_1}\wedge e_{h_2}\wedge\cdots\wedge e_{h_s}\otimes (e_1\wedge e_2\wedge\cdots\wedge e_n)^{\otimes b}$,
and it also generates a $1$-dimensional $M$-module. Then $V_{\mu}^{K}|_{M}$ splits multiplicity free. By Lemma \ref{GKMmultifree}, we have $\big[V_{\lambda}^{G}|_{K}:V_{\mu}^{K}\big]\leq1$ for any $\lambda\in P_{G}^{+}$ and the lemma follows.
\end{proof}

We have the stability result for the multiplicities due to van Pruijssen \cite{Prui-IMRN}.

\begin{Remark}\label{Bomega2induction}
Let $\lambda\in P_G^+$,
then in \cite[Section~3]{Prui-IMRN} it is shown that $\big[V_{\mu}^{K}|_{M}:V_{\lambda|_{T_{M}}}^{M}\big]\geq\big[V_{\lambda}^{G}|_{K}:V_{\mu}^{K}\big]$ and $\big[V_{\lambda+\omega_{i}+\omega_{m+n-i}}^{G}|_{K}:V_{\mu}^{K}]\geq[V_{\lambda}^{G}|_{K}:V_{\mu}^{K}\big]$.
\end{Remark}

For a weight vector $v_H=e_{h_1}\wedge e_{h_2}\wedge\cdots\wedge e_{h_s}\otimes (e_1\wedge e_2\wedge\cdots\wedge e_n)^{\otimes b}\in V_\mu^K$, the corresponding weight is
\[\eta_H=\sum_{i=1}^s (\omega_{h_i}-\omega_{h_i-1})+b\omega_n=\omega_{x_1}-\omega_{y_1}+\omega_{x_2}-\omega_{y_2}+\cdots+\omega_{x_o}-\omega_{y_o}+b\omega_n.\]
Note that in this equation, we have $y_1<x_1<y_2<x_2<\cdots<y_o<x_o$, $o\leq s$ and ${\sum_{i=1}^{o}(x_i-y_i)=s}$. By adding a spherical weight $\sum_{i=1}^{o}(\omega_{y_i}+\omega_{y_{m+n-i}})$ on $\eta_H$, we have
\begin{equation}\label{expressionoflambdaH}
 \lambda_H=\omega_{x_1}+\omega_{m+n-y_1}+\omega_{x_2}+\omega_{m+n-y_2}+\cdots+\omega_{x_o}+\omega_{m+n-y_o}+b\omega_n.
\end{equation}
Then we define a set
\[B'(\mu)=\{\lambda_H\mid H=\{h_1,h_2,\dots,h_s\}\subset \{1,2,\dots,n\},\, h_1<h_2<\cdots<h_s\}\]
and we have
\begin{Lemma}\label{lem:PG+mudec}
For any $\lambda\in P_G^+(\mu)$, we have
\[\lambda=\lambda_H+\lambda_{\rm sph},\qquad \lambda_H\in B'(\mu),\qquad \lambda_{\rm sph}\in P_G^+(0).\]
Moreover, $P_G^+(\mu)$ has the product structure $B'(\mu)\times\mathbb{N}^n$ as mentioned in Section~{\rm \ref{subsection:notationsandpreliminaries}}.
\end{Lemma}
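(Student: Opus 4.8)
The plan is to establish the statement in two stages: first the decomposition $\lambda = \lambda_H + \lambda_{\rm sph}$ for every $\lambda \in P_G^+(\mu)$, and then the product structure $B'(\mu) \times \mathbb{N}^n$. For the first stage I would argue exactly as in the proof of Proposition~\ref{prop:P+Gmuofrankn}, using the description of $P_G^+(\mu)$ via the extended weight monoid of Pezzini and van Pruijssen for the case $\mu = \omega_s + b\omega_n$. Concretely, one writes $(\lambda, \mu) = \sum c_i \xi_i$ with $c_i \in \mathbb{N}$, and analyzes the constraint imposed by fixing the second entry equal to $\omega_s + b\omega_n$. As in Proposition~\ref{prop:P+Gmuofrankn} the generators whose second entry vanishes contribute a free spherical weight $\lambda_{\rm sph} \in P_G^+(0)$, while the remaining generators must combine to produce the second entry $\omega_s + b\omega_n$; modding out by the spherical weights built from pairs of the form $(\lambda_i, 0)$ leaves a finite set of ``bottom'' solutions. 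The key geometric fact to verify is that these bottom solutions are precisely the $\lambda_H$ of \eqref{expressionoflambdaH}: each choice of $H = \{h_1 < \cdots < h_s\} \subset \{1,\dots,n\}$ produces the weight $\eta_H$ as an $M$-weight of $V_\mu^K$ (this is already recorded before the statement), and adding the minimal spherical weight $\sum_i (\omega_{y_i} + \omega_{m+n-y_i})$ needed to make $\eta_H$ dominant for $G$ gives $\lambda_H$.

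For the direction that every $\lambda_H + \lambda_{\rm sph}$ with $\lambda_H \in B'(\mu)$ and $\lambda_{\rm sph} \in P_G^+(0)$ actually lies in $P_G^+(\mu)$, I would invoke Remark~\ref{Bomega2induction}: since $\lambda_H$ has $\eta_H$ as a weight of $V_\mu^K|_M$ and this is the extreme $M$-weight aligned with $\lambda_H|_{T_M}$, the base case $[V_{\lambda_H}^G|_K : V_\mu^K] = 1$ holds (one checks $\eta_H = \lambda_H|_{T_M}$ and uses the stability inequality together with a multiplicity-free base count), and then the second inequality in Remark~\ref{Bomega2induction} propagates membership under adding any spherical generator $\omega_i + \omega_{m+n-i}$, hence under adding an arbitrary $\lambda_{\rm sph} \in P_G^+(0) = \bigoplus_{i=1}^n \mathbb{N}(\omega_i + \omega_{m+n-i})$. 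This yields the inclusion $B'(\mu) + P_G^+(0) \subseteq P_G^+(\mu)$, and combined with the first stage gives equality as sets.

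The product structure then requires showing the map $B'(\mu) \times \mathbb{N}^n \to P_G^+(\mu)$, $(\lambda_H, (d_1,\dots,d_n)) \mapsto \lambda_H + \sum_i d_i(\omega_i + \omega_{m+n-i})$ is a bijection. Surjectivity is the content of the decomposition just proved. For injectivity I would expand both sides in the $\omega$-basis and track which $\omega_j$ for $j > n$ (equivalently $\omega_{m+n-i}$ with $i \le n$) appear: the spherical part $\lambda_{\rm sph}$ is the unique part contributing to the coefficients of $\omega_{m+n-1}, \omega_{m+n-2}, \dots$ beyond what $\lambda_H$ forces, and since the $y_i$ occurring in $\lambda_H$ are determined by $H$ while the $d_i$ are read off from the remaining coefficients of $\omega_{m+n-i}$, one recovers $(d_1,\dots,d_n)$ and then $\lambda_H$ uniquely. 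This is essentially a bookkeeping argument on the support of a weight written in fundamental weights, paralleling the rank-two analysis in \cite{KoelLiu}.

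The main obstacle I anticipate is the injectivity/uniqueness step: a priori two different $H$'s could, after adding different spherical weights, give the same $\lambda$, because the passage from $\eta_H$ to $\lambda_H$ already absorbs a spherical correction $\sum(\omega_{y_i} + \omega_{m+n-y_i})$, so the ``bottom'' $B'(\mu)$ is not obviously a transversal for the $P_G^+(0)$-action. Resolving this requires pinning down that the $x_i$ (which index the coefficients of $\omega_{x_i}$ with $x_i \le n$, not of the form $\omega_{m+n-i}$) together with the pattern $y_1 < x_1 < y_2 < \cdots$ are uniquely reconstructible from $\lambda$ — i.e.\ that distinct subsets $H$ give $\lambda_H$ lying in distinct cosets of $P_G^+(0)$. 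I expect this to follow from the rigidity of the interlacing condition $y_1 < x_1 < y_2 < x_2 < \cdots < y_o < x_o$ with $\sum (x_i - y_i) = s$, but making it airtight is where the real work lies.
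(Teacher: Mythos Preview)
Your approach diverges substantially from the paper's, and in ways that create real gaps.

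First, the paper does \emph{not} use the extended weight monoid here. The generators $\xi_i$ you want to invoke are only recorded in the paper for the case $\mu = a\omega_1 + b\omega_n$ (the $P=\{\beta_1\}$ entry of \cite{PezzvP}); for $\mu = \omega_s + b\omega_n$ with $1<s<n$ they are not given, and computing them is nontrivial. Instead the paper argues via restriction to $T_M$: since $V_\mu^K|_M$ is multiplicity free, the inequality $[V_\mu^K|_M : V_{\lambda|_{T_M}}^M] \geq [V_\lambda^G|_K : V_\mu^K] = 1$ from Remark~\ref{Bomega2induction} forces $\lambda|_{T_M}$ to be one of the $M$-weights $\eta_H$ of $V_\mu^K$. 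Because $(\omega_i + \omega_{n+m-i})|_{T_M} = 0$ and these span the kernel of restriction, one gets $\lambda = \lambda_H + \sum_i d_i(\omega_i + \omega_{n+m-i})$ with $d_i \in \mathbb{Z}$. The key observation is then that in $\lambda_H = \sum b_j\omega_j$ one has either $b_i=0$ or $b_{n+m-i}=0$ for each $i\le n$, so dominance of $\lambda$ forces $d_i \geq 0$. This also makes injectivity immediate --- $H$ is recovered from $\lambda|_{T_M}$ --- so your anticipated ``main obstacle'' dissolves.

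Second, you attempt to prove the \emph{reverse} inclusion $B'(\mu) + P_G^+(0) \subseteq P_G^+(\mu)$ as part of this lemma, but the paper does not: that equality is established only in Theorem~\ref{thm:omegasbomegan}, using the approximate functions $Q_\lambda^\mu$ and a leading-term argument (Proposition~\ref{prop:B'mu}). Your proposed shortcut via Remark~\ref{Bomega2induction} has a gap at the base case: the remark gives only the upper bound $[V_{\lambda_H}^G|_K : V_\mu^K] \leq 1$, not the lower bound, and ``a multiplicity-free base count'' does not supply it. Stability then propagates nothing. So restrict yourself to what the lemma actually claims --- the inclusion $P_G^+(\mu) \subseteq B'(\mu) + P_G^+(0)$ with unique decomposition --- and use the $T_M$-restriction argument.
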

\begin{proof}
For $\lambda\in P_G^+(\mu)$, since $V_{\mu}^K|_M$ splits multiplicity free, by Remark \ref{Bomega2induction} we have
\[1=\big[V_\lambda^G|_K:V_\mu^K\big]\leq\big[V_\mu^K|_M:V_{\lambda|_{T_M}}^M\big]=1.\]
It leads to
\[\lambda=\eta_H+\lambda_{\rm sph}',\qquad \lambda_{\rm sph}'\in P_G^+(0)\]
since $(\omega_i+\omega_{n+m-i})|_{T_M}=0$. Let
\[\lambda=\eta_H+\lambda_{\rm sph}'=\lambda_H+\sum_{i=1}^n d_i(\omega_i+\omega_{n+m-i}),\qquad d_i\in\mathbb{Z},\]
where
\[\lambda_{H}=\sum_{i=1}^{n+m} b_i\omega_i\in B'(\mu),\qquad b_i\in\mathbb{N},\]
then either $b_i=0$ or $b_{n+m-i}=0$ by \eqref{expressionoflambdaH}. We assume $b_i=0$. If $d_i<0$, then the coefficient of $\omega_i$ in $\lambda$ is negative, which contradicts the fact that $\lambda$ is dominant. Also the situation of $b_{n+m-i}=0$ is similar. So $d_i\geq0$ for $i=1,2,\dots,n$ and we have
\[\lambda=\lambda_H+\lambda_{\rm sph},\qquad \lambda_{\rm sph}\in P_G^+(0).\]
This lemma is proved.
\end{proof}

\section[Spherical function restricted to A]{Spherical function restricted to $\boldsymbol{A}$}\label{sec:sphfunctoA}
By Definition \ref{def} and the Cartan decomposition $G=KAK$, we know that the corresponding spherical functions are uniquely determined by the spherical function restricted to $A$. Let $\Phi\in E^\mu$, for $m\in M$ and $a\in A$, we have
\[\pi_{\mu}^{K}(m)\Phi(a)=\Phi(ma)=\Phi(am)=\Phi(a)\pi_{\mu}^{K}(m).\]
It leads to
\[\Phi\colon \ A\rightarrow\text{End}_{M}\big(V_{\mu}^{K}\big).\]
We have an $M$-module decomposition of $V_{\mu}^{K}$ such that
\[V_{\mu}^{K}|_{M}=\bigoplus_{l=1}^N V_{\rho_{l}}^{M},\]
then by Schur's lemma and $V_{\mu}^{K}|_{M}$ splitting multiplicity free, we have
\[\Phi(a)\in \operatorname{End}_{M}V_{\mu}^{K}\cong\bigoplus_{l=1}^N
\operatorname{End}_{M}\big(V_{\rho_{l}}^{M}\big)\cong\mathbb{C}^{N}.\]

\begin{Remark}\label{rem:ortho}
Let $\Phi,\Psi\in E^\mu$, then we have
\begin{align*}
\begin{split}
\langle\Phi,\Psi\rangle&=c_1\int_{K}\int_{A}\int_{K} \operatorname{Tr}(\Phi(k_{1}ak_{2})(\Psi(k_{1}ak_{2}))^{\ast})|\delta(a)|\, {\rm d}k_{1}{\rm d}a{\rm d}k_{2}\\
&=c_{1}\int_{A}\operatorname{Tr}(\Phi(a)(\Psi(a))^{\ast})|\delta(a)|{\rm d}a,
\end{split}
\end{align*}
where
\[c_{1}=\frac{1}{\int_{A}|\delta(a)|{\rm d}a}\]
by Cartan decomposition, since the integrand is independent of $k_{1}$ and $k_{2}$ and $\mu$ is a unitary representation. For $a=a_\mathbf{t}\in A$ as in \eqref{atinA}, the expression of $\delta(a)=\delta(a_{\mathbf{t}})$ is given by, see \cite[p.~383]{Helg-1962},
\[\delta(a_\mathbf{t})=\prod_{i=1}^n \sin^{2(m-n)}t_{i}\prod_{i=1}^n \sin 2t_{i}
\prod_{1\leq i<j\leq n}\bigl(\sin^{2}(t_{i}+t_{j})\sin^{2}(t_{i}-t_{j})\bigr).\]
\end{Remark}
Recall the restriction in Section \ref{subsection:notationsandpreliminaries}, where $E^\mu$ has a structure $B(\mu)\times\mathbb{N}^n$, we give a general structure of $\Phi_\lambda^\mu|_A$'s where $\lambda\in P_G^+(\mu)$.
\begin{Lemma}[{\cite[Lemma 6.1]{Prui-IMRN}}] $E^\mu$ is freely and finitely generated by $\phi_i$'s with $i=1,2,\dots,n$ as an $E^0$ module. Moreover, let $F_{\lambda}=\phi_{1}^{d_{1}}\phi_{2}^{d_{2}}\cdots\phi_{n}^{d_{n}}\Phi_{\nu}^{\mu}|_{A}$ with $\nu\in B(\mu)$ and $\lambda=\nu+\sum_{j=1}^{n}d_{j}\lambda_{j}\in P_{G}^{+}(\mu)$, then all $F_{\lambda}$'s are linearly independent.
\end{Lemma}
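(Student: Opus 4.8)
The plan is to establish the two assertions separately: first the freeness/finite generation of $E^\mu$ over $E^0$ with the $\phi_i$'s as generators, and then the linear independence of the family $F_\lambda$, which will in fact be almost a restatement of the first assertion once we know $P_G^+(\mu)$ has the product structure $B(\mu)\times\mathbb N^n$.

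First I would recall from Proposition~\ref{prop:P+Gmuofrankn} that every $\lambda\in P_G^+(\mu)$ decomposes uniquely as $\lambda=\nu+\sum_{j=1}^n d_j\lambda_j$ with $\nu\in B(\mu)$ and $(d_1,\dots,d_n)\in\mathbb N^n$: uniqueness holds because $B(\mu)$ consists of weights all having the same value $a=\sum a_i$ summed over the first block and these differ from elements of $P_G^+(0)$ in a way one reads off from the simple-root expansions of the $\lambda_i$ given in Section~\ref{caseofaomega1bomegan}. Then, using the approximation statement from the introduction, $\Phi_\nu^\mu$ is the ``bottom'' spherical function for each $\nu\in B(\mu)$ and $\phi_1^{d_1}\cdots\phi_n^{d_n}\Phi_\nu^\mu$ lies in the span of $\{\Phi_\lambda^\mu : \lambda\preccurlyeq\nu+\sum d_j\lambda_j,\ \lambda\in P_G^+(\mu)\}$ with a nonzero leading coefficient on $\Phi_{\nu+\sum d_j\lambda_j}^\mu$; this is exactly the $Q_\lambda^\mu = a_\lambda\Phi_\lambda^\mu + \text{l.o.t.}$ shape. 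A triangularity argument with respect to $\preccurlyeq$ on $P_G^+(\mu)$ then shows that $\{F_\lambda\}$ and $\{\Phi_\lambda^\mu|_A\}$ span the same space and that the change-of-basis matrix is (block-)triangular with nonzero diagonal, giving both that the $\phi_i$'s generate $E^\mu$ over $E^0$ and that the $F_\lambda$ are linearly independent. Finiteness of the generating set is immediate since $|B(\mu)|<\infty$, and freeness follows from the linear independence together with the fact that the monomials $\phi_1^{d_1}\cdots\phi_n^{d_n}$ are themselves linearly independent in $E^0$ (the zonal case, Kr\"amer's description of $P_G^+(0)$).

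For the module structure over $E^0$, I would identify $E^0$ with the algebra of $W$-invariant trigonometric polynomials on $A$ (equivalently, via the zonal spherical functions $\phi_i$, a polynomial algebra $\mathbb C[\phi_1,\dots,\phi_n]$ as established by Kr\"amer and Heckman), and $\Phi_\nu^\mu|_A$ for $\nu$ ranging over $B(\mu)$ as $N$-tuples of scalar functions via the decomposition $V_\mu^K|_M=\bigoplus_{l=1}^N V_{\rho_l}^M$ discussed in Section~\ref{sec:sphfunctoA}. The claim ``freely generated'' then says the $E^0$-linear map $\bigoplus_{\nu\in B(\mu)} E^0\cdot\Phi_\nu^\mu|_A \to E^\mu$ is an isomorphism; surjectivity is the triangularity argument above, and injectivity is the statement that if $\sum_\nu p_\nu\,\Phi_\nu^\mu|_A=0$ with $p_\nu\in E^0$ then all $p_\nu=0$, which one gets by looking at the top-degree terms and invoking that distinct $\nu\in B(\mu)$ land in distinct leading positions under $\preccurlyeq$. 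Since the cited \cite[Lemma~6.1]{Prui-IMRN} already contains this, the cleanest route for the paper is to reduce to that lemma: verify that our $(G,K,\mu)$ satisfies its hypotheses (multiplicity-free triple, which is Lemma~\ref{deitgkmmultifreeaomega1}, and the product structure of $P_G^+(\mu)$, which is Proposition~\ref{prop:P+Gmuofrankn}) and then quote it.

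The main obstacle I anticipate is making the triangularity precise: one needs that for $\lambda=\nu+\sum d_j\lambda_j$ the spherical functions appearing in $\phi_1^{d_1}\cdots\phi_n^{d_n}\Phi_\nu^\mu$ other than $\Phi_\lambda^\mu$ itself are indexed by strictly smaller weights in the $\preccurlyeq$ ordering that \emph{still lie in $P_G^+(\mu)$} — a priori a product $\phi_i\Phi_{\lambda'}^\mu$ could involve $\Phi_{\lambda''}^\mu$ with $\lambda''\in P_G^+(\mu)$ incomparable to $\lambda'$, and one must check the combinatorics of $B(\mu)+P_G^+(0)$ rules this out, or at least that the ordering restricted to each coset $\nu+P_G^+(0)$ is compatible enough for the triangular inversion to go through. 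This is where the explicit description of $B(\mu)$ and the $\lambda_i$ in simple roots does the real work; everything else is bookkeeping or a direct appeal to \cite[Lemma~6.1]{Prui-IMRN}.
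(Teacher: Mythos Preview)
The paper does not prove this lemma at all: it is stated with the attribution \cite[Lemma~6.1]{Prui-IMRN} and immediately used. So there is no ``paper's own proof'' to compare against; the paper's approach is precisely the one you identify at the end of your second paragraph --- verify that $(G,K,\mu)$ satisfies the hypotheses of van Pruijssen's lemma (multiplicity-free triple via Lemma~\ref{deitgkmmultifreeaomega1} or Lemma~\ref{deitgkmmultifree}, product structure $P_G^+(\mu)=B(\mu)+P_G^+(0)$ via Proposition~\ref{prop:P+Gmuofrankn} or Lemma~\ref{lem:PG+mudec}) and quote it.

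Your triangularity sketch is a reasonable outline of what the cited result actually proves, and the obstacle you flag in the last paragraph is real and is exactly what the general argument in \cite{Prui-IMRN} handles. One small correction: the lemma's phrasing ``generated by $\phi_i$'s'' is slightly misleading --- the $\phi_i$ generate $E^0$ as an algebra, while the module generators of $E^\mu$ over $E^0$ are the $\Phi_\nu^\mu$ for $\nu\in B(\mu)$; your proposal reads it correctly. Note also that the triangularity you describe (the $F_\lambda\leftrightarrow\Phi_\lambda^\mu|_A$ transition) is not used to \emph{prove} the cited lemma in this paper but is rather stated as the \emph{next} lemma (Lemma~\ref{lem:sphericalpolynomials}), whose proof takes the linear independence of the $F_\lambda$ as input.
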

\begin{Lemma}\label{lem:sphericalpolynomials}
Let $\lambda\in P_{G}^{+}(\mu)$, then the corresponding spherical function can be written as
\[\Phi_{\lambda}^{\mu}|_{A}= \sum_{\lambda'\preccurlyeq\lambda}q_{\lambda'}F_{\lambda'},\qquad  q_{\lambda'}\in\mathbb{C}\]
and $q_{\lambda}\neq0$.
\end{Lemma}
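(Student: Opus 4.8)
The plan is to expand $\Phi_\lambda^\mu|_A$ in the basis $\{F_{\lambda'}\}_{\lambda'\in P_G^+(\mu)}$ provided by the preceding two lemmas and then show that only indices $\lambda'\preccurlyeq\lambda$ can occur, with the top coefficient nonzero. Since $\Phi_\lambda^\mu|_A$ is an element of $E^\mu|_A$, and the $F_{\lambda'}$ form a $\mathbb{C}$-basis of this space (freely generated as an $E^0$-module, with the products giving a basis), we may write $\Phi_\lambda^\mu|_A=\sum_{\lambda'\in P_G^+(\mu)}q_{\lambda'}F_{\lambda'}$ as a finite sum. The content of the lemma is therefore (i) $q_{\lambda'}=0$ unless $\lambda'\preccurlyeq\lambda$, and (ii) $q_\lambda\neq0$.

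For the support statement, I would argue on the level of weights appearing in the matrix entries. Writing $\Phi_\lambda^\mu(g)=j^*\circ\pi_\lambda^G(g)\circ j$ from Lemma~\ref{newdef}, every matrix coefficient of $\Phi_\lambda^\mu$ is a matrix coefficient of $\pi_\lambda^G$; restricted to $T_G\subset A^{\mathbb C}$ (or to $A$ and then analytically continued), the weights that occur are weights of $V_\lambda^G$, hence all $\preccurlyeq\lambda$ in the partial ordering of Section~\ref{sec:structhe}. On the other hand, each $F_{\lambda'}=\phi_1^{d_1}\cdots\phi_n^{d_n}\Phi_\nu^\mu|_A$ with $\lambda'=\nu+\sum d_j\lambda_j$ has $\lambda'$ as its unique highest weight: the zonal function $\phi_i=\Phi_{\lambda_i}^0$ has highest weight $\lambda_i$ (with nonzero leading term, since $\phi_i\neq 0$), the product $\phi_1^{d_1}\cdots\phi_n^{d_n}$ has highest weight $\sum d_j\lambda_j$, and $\Phi_\nu^\mu$ has highest weight $\nu$ with nonzero leading coefficient because $\nu\in P_G^+(\nu)$ forces $j\neq 0$ on the highest weight space. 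Multiplying, $F_{\lambda'}$ has highest weight exactly $\lambda'$ with nonzero coefficient there, and all other weights in $F_{\lambda'}$ are $\prec\lambda'$. Comparing: if some $q_{\lambda'}\neq 0$ with $\lambda'\not\preccurlyeq\lambda$, choose $\lambda'$ maximal among such indices in the $Q^+$-ordering; its leading weight $\lambda'$ cannot be cancelled by any other $F_{\lambda''}$ in the sum (those with $\lambda''\succ\lambda'$ have $q_{\lambda''}=0$ by maximality, those with $\lambda''\preccurlyeq\lambda'$ or incomparable do not contain the weight $\lambda'$ with the same leading behaviour), so the weight $\lambda'$ survives in $\Phi_\lambda^\mu|_A$, contradicting that all its weights are $\preccurlyeq\lambda$. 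Hence the sum runs only over $\lambda'\preccurlyeq\lambda$.

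For $q_\lambda\neq 0$, I would compare the coefficient of the top weight $\lambda$ on both sides. On the right, among the $F_{\lambda'}$ with $\lambda'\preccurlyeq\lambda$ appearing in the sum, only $F_\lambda$ contributes the weight $\lambda$ (all others have strictly lower top weight $\lambda'\prec\lambda$), and it does so with a nonzero leading coefficient by the factorisation argument above. On the left, $\Phi_\lambda^\mu|_A$ does contain the weight $\lambda$ with nonzero coefficient: this is because $\lambda$ is the highest weight of $V_\lambda^G$, the embedding $j\in\mathrm{Hom}_K(V_\mu^K,V_\lambda^G)$ is nonzero, and $j^*$ is surjective, so the relevant matrix coefficient $\langle\pi_\lambda^G(a)j(v),j(v_i)\rangle$ picks up the highest-weight contribution. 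Equating the two, $q_\lambda$ times a nonzero constant equals a nonzero number, so $q_\lambda\neq 0$.

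The main obstacle I expect is pinning down precisely what ``the weight $\lambda'$ appears with nonzero coefficient in $F_{\lambda'}$'' means as a statement about $\mathrm{End}(V_\mu^K)$-valued functions on $A$, since $\Phi_\lambda^\mu|_A$ takes values in $\mathrm{End}_M(V_\mu^K)\cong\mathbb{C}^N$ and the ``weights'' live in the characters of $T_{M}$ (via $T_G\to T_{G}/T_M$-type restriction), not the full weight lattice; one has to be careful that the partial ordering referenced in the statement is the one induced on $A$, and that leading terms of products of zonal and matrix spherical functions genuinely multiply without cancellation. This is exactly the kind of leading-term bookkeeping that the approximation philosophy in Section~\ref{subsection:notationsandpreliminaries} is set up to handle, and once the dictionary between $G$-weights, $M$-characters and functions on $A$ is fixed, the argument is the standard triangularity-plus-nonvanishing-of-the-diagonal computation.
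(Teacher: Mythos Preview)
Your approach is genuinely different from the paper's, and the obstacle you flag at the end is real and not resolved by your sketch.

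The paper does not argue via leading weights at all. Instead it runs the triangularity in the \emph{opposite} direction first: it observes that $F_\lambda$ is, by construction, the spherical function attached to the reducible representation $U_\lambda^G=V_\nu^G\otimes\bigotimes_i(V_{\lambda_i}^G)^{\otimes d_i}$, and the complete reducibility decomposition $U_\lambda^G=V_\lambda^G\oplus\bigoplus_{\lambda'\precneqq\lambda}m_{\lambda'}V_{\lambda'}^G$ immediately yields $F_\lambda=d_\lambda\Phi_\lambda^\mu|_A+\sum_{\lambda'\precneqq\lambda}d_{\lambda'}\Phi_{\lambda'}^\mu|_A$. The coefficient $d_\lambda$ is then forced to be nonzero by the linear independence of the $F_{\lambda'}$ (the preceding lemma), via a short induction on the partial order. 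Inverting this unitriangular system gives the statement. No weight-expansion bookkeeping on $A$ is needed.

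Your route, by contrast, requires two facts that are not available from what precedes the lemma: (a) that $F_{\lambda'}$, viewed as an $\operatorname{End}_M(V_\mu^K)$-valued function on $A$, has $\lambda'|_A$ as its unique maximal exponent with nonzero coefficient, and (b) that $\Phi_\lambda^\mu|_A$ itself has nonzero coefficient at $\lambda|_A$. Claim~(b) is particularly delicate: the matrix entries of $\Phi_\lambda^\mu$ are $\langle\pi_\lambda^G(a)j(v),j(v_i)\rangle$, and the image of $j$ is the copy of $V_\mu^K$ inside $V_\lambda^G$, which in general does \emph{not} contain the $G$-highest-weight line; so there is no a~priori reason for the character $e^{\lambda|_A}$ to appear. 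Claim~(a) also needs care because distinct $G$-weights can restrict to the same character of $A$ (the paper addresses exactly this phenomenon later, in Remark~\ref{rem:polynomialspartialordering}, but only for a restricted sublattice and only after further machinery). So the ``standard triangularity-plus-nonvanishing-of-the-diagonal'' computation you invoke is not actually available at this point; the paper's tensor-product argument sidesteps both issues entirely.
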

\begin{proof}
It is true for all the elements in $B(\mu)$ since $F_\nu=\Phi_\nu^\mu|_A$ by definition. Then for $\nu+\sum_{i=1}^{n}(\omega_i+\omega_{n+m-i})=\lambda\in P_{G}^{+}(\mu)$, we assume it is true for all $P_{G}^{+}(\mu)\ni\lambda'\precneqq\lambda$. We define
\[{\rm U}_\lambda^G=V_{\nu}^G\otimes\bigotimes_{i=1}^n  \big(V_{\omega_i+\omega_{n+m-i}}^G\big)^{\otimes d_i},\qquad
 u=v_{\nu}\otimes \bigotimes_{i=1}^n  v_i^{\otimes d_i},\]
where $v_\nu$ is the $K$ highest weight vector with weight $\mu$ in $V_\nu^G$, and $v_i$'s are the $K$-fixed vectors in $V_{\omega_i+\omega_{n+m-i}}^G$, i.e., the vectors generating the trivial $K$ module as described in Section~\ref{caseofaomega1bomegan}. So~$u$ is also a $K$ highest weight vector with weight $\mu$ in ${\rm U}_\lambda^G$, and $F_\lambda$ is the corresponding matrix spherical function. Since
\[{\rm U}_{\lambda}^{G}=V_{\lambda}^{G}\oplus \bigoplus_{\lambda'\precneqq\lambda}m_{\lambda'}V_{\lambda'}^{G}\]
by the complete reducibility theorem, we have
\[F_{\lambda}=d_{\lambda}\Phi_{\lambda}^{\mu}|_{A}+ \sum_{\lambda'\precneqq\lambda}d_{\lambda'}\Phi_{\lambda'}^{\mu}|_{A}
=d_{\lambda}\Phi_{\lambda}^{\mu}|_{A}+\sum_{\lambda'\precneqq\lambda}d'_{\lambda'}F_{\lambda'}\]
by the induction hypothesis. If $d_{\lambda}=0$, then $F_{\lambda}$ can be written as
\[F_{\lambda}=\sum_{\lambda'\precneqq\lambda}d'_{\lambda'}F_{\lambda'},\]
which contradicts the fact that $F_{\lambda}$'s are linearly independent. So $d_{\lambda}\neq0$ and this lemma is proved.
\end{proof}

\section{Radial part of the Casimir operator}\label{sec:radialpartofthecasimiroperator}
Now we give the explicit expression of the radial part $R$ of the Casimir operator in this case for the matrix spherical functions related to the $K$-representation. We follow the approach of Casselman and Mili\v{c}i\'{c} \cite{CassM}, see also Warner \cite[Proposition 9.1.2.11]{Warn-2}. The meaning of the radial part is that such an operator is acting on the function restricted to $A\subset G$. By \cite[Section~2.2]{KoelvPR-JFA}, we know that the spherical function restricted to $A$ is the eigenfunction of the radial part $R$ of the Casimir operator and
\[R\big(\Phi_{\lambda}^{\mu}|_{A}\big)=c_{\lambda}\Phi_{\lambda}^{\mu}|_{A}\]
with $c_{\lambda}=\langle\lambda,\lambda\rangle+2\langle\lambda,\rho\rangle$. We give the explicit calculation in Appendix \ref{app:caloper}.

We need to prove the following lemma before we calculate spherical function $\Phi_{\lambda}^{\mu}|_{A}$'s.
\begin{Lemma}\label{eigveldiff}
For $\lambda\succneqq\lambda'$ with $\lambda,\lambda'\in P_{G}^{+}(\mu)$, we have $c_\lambda>c_{\lambda'}$.
\end{Lemma}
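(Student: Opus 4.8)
The plan is to express everything in terms of the difference $\lambda-\lambda'$ and exploit that, by the remark following Proposition~\ref{prop:P+Gmuofrankn} (and the analogous fact for $\mu=\omega_s+b\omega_n$ together with Lemma~\ref{lem:PG+mudec}), every element of $P_G^+(\mu)$ satisfies $\lambda\succcurlyeq\mu$, so in particular $\lambda$ and $\lambda'$ are both dominant. Write $c_\lambda=\langle\lambda+\rho,\lambda+\rho\rangle-\langle\rho,\rho\rangle$, so that
\[
c_\lambda-c_{\lambda'}=\langle\lambda+\rho,\lambda+\rho\rangle-\langle\lambda'+\rho,\lambda'+\rho\rangle
=\langle\lambda-\lambda',\lambda+\lambda'+2\rho\rangle .
\]
Set $\beta=\lambda-\lambda'$. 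By hypothesis $\lambda\succneqq\lambda'$, i.e.\ $\beta\in Q^+$ and $\beta\neq 0$, so $\beta=\sum_{i=1}^{m+n-1}k_i\alpha_i$ with $k_i\in\mathbb N$ not all zero.

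First I would reduce the positivity of $\langle\beta,\lambda+\lambda'+2\rho\rangle$ to positivity of each $\langle\alpha_i,\lambda+\lambda'+2\rho\rangle$. Here I use that $\rho=\sum_j\omega_j$, so $\langle\alpha_i,\rho\rangle=\langle\alpha_i,\alpha_i\rangle/2>0$ for every simple root (using $\langle\omega_j,\alpha_i^\vee\rangle=\delta_{ij}$ in $A_{m+n-1}$, equivalently the explicit formula for $\omega_i$ in Remark~\ref{weightrootrelation}). Since $\lambda,\lambda'$ are dominant, $\langle\alpha_i,\lambda\rangle\ge 0$ and $\langle\alpha_i,\lambda'\rangle\ge 0$, hence $\langle\alpha_i,\lambda+\lambda'+2\rho\rangle\ge 2\langle\alpha_i,\rho\rangle=\langle\alpha_i,\alpha_i\rangle>0$ for every $i$. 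Therefore
\[
c_\lambda-c_{\lambda'}=\sum_{i=1}^{m+n-1}k_i\,\langle\alpha_i,\lambda+\lambda'+2\rho\rangle\ge \sum_{i=1}^{m+n-1}k_i\,\langle\alpha_i,\alpha_i\rangle>0,
\]
the last inequality because at least one $k_i\ge 1$. This proves $c_\lambda>c_{\lambda'}$.

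The only genuinely delicate point is making sure the inner product $\langle\cdot,\cdot\rangle$ used in the definition $c_\lambda=\langle\lambda,\lambda\rangle+2\langle\lambda,\rho\rangle$ is (a positive multiple of) the one for which dominance of $\lambda$ means $\langle\lambda,\alpha_i\rangle\ge 0$; for the $A_{m+n-1}$ root system with the normalization $\langle\epsilon_i,\epsilon_j\rangle=\delta_{ij}$ fixed in Section~\ref{sec:structhe}, one has $\langle\alpha_i,\alpha_i\rangle=2$ and $\alpha_i^\vee=\alpha_i$, so dominance in the weight-lattice sense is exactly $\langle\lambda,\alpha_i\rangle\in\mathbb N$, and the argument goes through verbatim. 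I do not expect any real obstacle here: the statement is a standard ``Casimir eigenvalue is strictly monotone along the dominance order on dominant weights'' fact, and the main content is just recording that all $\lambda\in P_G^+(\mu)$ are dominant so that $\langle\alpha_i,\lambda\rangle\ge 0$ can be used. One could alternatively phrase the whole computation using fundamental weights via Remark~\ref{weightrootrelation}, but the simple-root bookkeeping above is the cleanest route.
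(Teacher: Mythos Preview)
Your proof is correct and follows essentially the same route as the paper: both compute $c_\lambda-c_{\lambda'}=\langle\lambda+\lambda'+2\rho,\lambda-\lambda'\rangle$ and then use that $\lambda-\lambda'\in Q^+\setminus\{0\}$ together with strict dominance of $\lambda+\lambda'+2\rho$ (coming from $\rho$) to conclude; you simply make explicit the step the paper leaves as ``$\lambda+\lambda'+2\rho\in P_G^+$ and $\lambda-\lambda'\in Q^+$''. One small remark: your appeal to $\lambda\succcurlyeq\mu$ to deduce dominance is unnecessary and a non sequitur---elements of $P_G^+(\mu)$ are dominant by the very definition $P_G^+(\mu)\subset P_G^+$.
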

\begin{proof}
We have
\begin{align*}
c_{\lambda}-c_{\lambda'}&=\langle\lambda,\lambda\rangle+2\langle\lambda,\rho\rangle-\langle\lambda',\lambda'\rangle-2\langle\lambda',\rho\rangle=\langle\lambda+\rho,\lambda+\rho\rangle-\langle\lambda'+\rho,\lambda'+\rho\rangle\\
&=\langle\lambda+\lambda'+2\rho,\lambda-\lambda'\rangle.
\end{align*}
Since $\lambda+\lambda'+2\rho\in P_{G}^{+}$ and $\lambda-\lambda'\in Q^{+}$, it leads to $c_{\lambda}>c_{\lambda'}$ and this lemma is proved.
\end{proof}

Now fix $\lambda\in P_{G}^{+}(\mu)$, then the finite-dimensional space of matrix spherical functions spanned by $\big\{\Phi_{\lambda'}^{\mu}|_A\big\}_{\lambda'\preccurlyeq\lambda}$ is also spanned by $\{F_{\lambda'}\}_{\lambda'\preccurlyeq\lambda}$. Moreover, by Lemma \ref{lem:sphericalpolynomials}, the transition between these bases is triangular. Since the basis $\big\{\Phi_{\lambda'}^{\mu}|_A\big\}_{\lambda'\preccurlyeq\lambda}$ is a basis of eigenfunctions for the action of the radial part $R$ of the Casimir operator $R$, the space is invariant for $R$. By Lemma \ref{eigveldiff}, the eigenspace for the eigenvalue $c_{\lambda}$ is one-dimensional. Using Lemma \ref{lem:sphericalpolynomials}, we find that $R$ acts lower triangularly on $F_{\lambda}$,
\[R(F_{\lambda})=c_{\lambda}F_{\lambda}+\sum_{\lambda'\precneqq\lambda}b_{\lambda'}F_{\lambda'}.\]
From this result, we can obtain precise information on the matrix spherical functions $\Phi_\lambda^\mu|_A$.

\section{Special cases}\label{geninj}
The goal of this section is to give some simple cases of the matrix spherical functions for the multiplicity free triple $(G,K,\omega_s+b\omega_n)$ with $s=0,1,\dots,n$ and $b\in\mathbb{N}$. This will be used in Sections \ref{exa:aomega1+bomegan} and \ref{sec:omegasbomegan} to calculate the approximate functions.

\begin{Theorem}\label{gdec}
For $c\leq n+m-d$, we have
\[V_{\omega_{c}}^{G}\otimes V_{\omega_{n+m-d}}^{G}\cong\bigoplus_{i=0}^{\min\{c,d\}} V_{\omega_{c-i}+\omega_{n+m-d+i}}^{G},\]
where $\omega_0=\omega_{n+m}=0$ by convention.
\end{Theorem}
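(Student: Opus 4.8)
The plan is to prove this decomposition of $V_{\omega_c}^G \otimes V_{\omega_{n+m-d}}^G$ by explicit construction of highest weight vectors, working inside the concrete polynomial/exterior-algebra model of the fundamental representations of $\mathrm{SL}(n+m,\mathbb{C})$. First I would recall that $V_{\omega_c}^G \cong \bigwedge^c \mathbb{C}^{n+m}$ and that, under the duality $\big(\bigwedge^k \mathbb{C}^{n+m}\big)^* \cong \bigwedge^{n+m-k}\mathbb{C}^{n+m}$ (up to a twist by the determinant, which is trivial for $\mathrm{SL}$), we have $V_{\omega_{n+m-d}}^G \cong \big(\bigwedge^d \mathbb{C}^{n+m}\big)^*$. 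So the claim is equivalent to the classical branching/Pieri-type statement
\[
\textstyle\bigwedge^c \mathbb{C}^N \otimes \big(\bigwedge^d \mathbb{C}^N\big)^* \cong \bigoplus_{i=0}^{\min\{c,d\}} V_{\omega_{c-i}+\omega_{N-d+i}}^G,
\]
with $N=n+m$, valid when $c+d\le N$. I expect the cleanest route is either to quote this as a known Littlewood–Richardson computation or to give a self-contained argument via highest weight vectors as below.

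Next, for the self-contained approach I would fix the standard basis $e_1,\dots,e_N$ of $\mathbb{C}^N$ with dual basis $f_1,\dots,f_N$, so that $\bigwedge^c\mathbb{C}^N$ has highest weight vector $e_1\wedge\cdots\wedge e_c$ (weight $\omega_c$) and $\big(\bigwedge^d\mathbb{C}^N\big)^*$ has highest weight vector $f_N\wedge f_{N-1}\wedge\cdots\wedge f_{N-d+1}$ (weight $-(\epsilon_N+\cdots+\epsilon_{N-d+1})$, i.e.\ the weight $\omega_{N-d}$). For each $i$ with $0\le i\le \min\{c,d\}$ I would exhibit a highest weight vector of weight $\omega_{c-i}+\omega_{N-d+i}$ in the tensor product, of the schematic form
\[
w_i \;=\; \sum_{\sigma} \pm\, (e_1\wedge\cdots\wedge e_{c-i}\wedge e_{j_1}\wedge\cdots\wedge e_{j_i}) \otimes (f_{k_1}\wedge\cdots\wedge f_{k_i}\wedge f_N\wedge\cdots\wedge f_{N-d+i+1}),
\]
where the sum runs over a suitable index set so that the $\mathfrak{n}^+$-raising operators annihilate $w_i$; the natural candidate uses a "contraction" tensor $\sum_{j} e_j\otimes f_j$ inserted $i$ times. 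I would then check (a) that $w_i$ has the asserted weight, (b) that $w_i$ is a highest weight vector, and (c) that $w_i\ne 0$ — these are short computations with the explicit action of the Chevalley generators on wedge products. The condition $c+d\le N$ enters precisely here: it guarantees enough room in the $N$-dimensional space for the blocks $\{1,\dots,c-i\}$, the contraction indices, and $\{N,\dots,N-d+i+1\}$ to coexist without forced vanishing.

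Finally I would close the argument by a dimension or character count. Having produced $\min\{c,d\}+1$ distinct irreducible constituents $V_{\omega_{c-i}+\omega_{N-d+i}}^G$ inside $V_{\omega_c}^G\otimes V_{\omega_{N-d}}^G$, it remains to see there is nothing else and no higher multiplicity. The quickest way is to compare dimensions: $\dim\big(V_{\omega_c}^G\otimes V_{\omega_{N-d}}^G\big) = \binom{N}{c}\binom{N}{d}$, and one verifies the Vandermonde-type identity $\binom{N}{c}\binom{N}{d} = \sum_{i=0}^{\min\{c,d\}} \dim V_{\omega_{c-i}+\omega_{N-d+i}}^G$ using the Weyl dimension formula for the two-row-plus-two-column weights $\omega_{c-i}+\omega_{N-d+i}$ of $A_{N-1}$; alternatively one can run a character computation with Schur functions, where the statement becomes the product $s_{(1^c)}\cdot s_{(1^d)}^{*}$ expanded in the hook-type Schur functions, a standard Littlewood–Richardson evaluation. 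The main obstacle I anticipate is the bookkeeping in step (b)–(c): writing down the contraction vectors $w_i$ with the correct signs and verifying they are highest weight vectors and nonzero requires care, and one must genuinely use $c\le n+m-d$ to avoid degeneracies; everything else is routine. If the paper prefers brevity, I would simply cite this as a well-known special case of the Littlewood–Richardson rule for $\mathrm{GL}_N$ (it is the "Pieri rule for a column and a co-column"), which disposes of the statement in one line.
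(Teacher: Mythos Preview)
Your proposal is correct, and the closing remark in your last paragraph is in fact exactly what the paper does: the paper's entire proof is a one-line citation, stating that the argument is ``similar to \cite[Lemma~3.1]{KoelLiu} by using \cite[Proposition~1]{Kass} and \cite[Corollary~3.5]{Kuma}''. These references give general tensor-product decomposition criteria (in particular, Kass gives the explicit two-fundamental-weight decomposition for classical types, and Kumar's PRV-type result guarantees the constituents appear), so the paper simply defers the computation to the literature.

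Your main route --- the explicit exterior-algebra model with contraction-type highest weight vectors, closed off by a dimension or character count --- is a genuinely different and more self-contained argument. What it buys you is an explicit highest weight vector $w_i$ in each summand, which is exactly the kind of data the paper needs later anyway (compare Theorem~\ref{kintertwiner} and Remark~\ref{coefficients}, where such vectors are constructed by hand for the $K$-intertwiners). Conversely, the paper's citation approach is much shorter and avoids the sign/nonvanishing bookkeeping you flag as the main obstacle. Your Pieri/Littlewood--Richardson reformulation (adding a vertical strip of size $c$ to the single column $(1^{N-d})$ and reading off $\omega_{c-i}+\omega_{N-d+i}$) is also correct and arguably the cleanest way to see the result; it is essentially what the Kass reference encodes.
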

\begin{proof}
The proof of this theorem is similar to \cite[Lemma 3.1]{KoelLiu} by using \cite[Proposition 1]{Kass} and~\cite[Corollary 3.5]{Kuma}.
\end{proof}

For the ordered tuple $H=\{h_{1},h_{2},\dots,h_{s}\}$ with $h_{i}<h_{i+1}$ as a subset of $\{1,2,\dots,n\}$, we define $e_{H}=e_{h_{1}}\wedge e_{h_{2}}\wedge\cdots\wedge e_{h_{s}}$ and $\cos t_{H}=\cos t_{h_{1}}\cos t_{h_{2}}\cdots\cos t_{h_{s}}$.

\begin{Corollary}\label{kmodulenumber}
We have $\big[V_{\omega_{s+u}}^{G}\otimes V_{\omega_{m+n-u}}^{G}|_{K}:V_{\omega_{s}}^{K}\big]\leq u+1$.
\end{Corollary}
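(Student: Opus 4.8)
The plan is to combine the branching rule from Theorem~\ref{gdec} with the branching of each summand from $G$ to $K$, and then to estimate $\big[V_{\omega_j}^G|_K:V_{\omega_s}^K\big]$ for the relevant fundamental representations. First I would apply Theorem~\ref{gdec} with $c=s+u$ and $d=u$ (valid since $s+u\le n+m$ and hence $c\le n+m-d$ as $s\le n$): this gives
\[
V_{\omega_{s+u}}^G\otimes V_{\omega_{m+n-u}}^G\cong\bigoplus_{i=0}^{\min\{s+u,u\}} V_{\omega_{s+u-i}+\omega_{m+n-u+i}}^G
=\bigoplus_{i=0}^{u} V_{\omega_{s+u-i}+\omega_{m+n-u+i}}^G,
\]
using $u\le\min\{s+u,u\}$. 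So the multiplicity of $V_{\omega_s}^K$ in the tensor product is the sum over $i=0,\dots,u$ of $\big[V_{\omega_{s+u-i}+\omega_{m+n-u+i}}^G|_K:V_{\omega_s}^K\big]$, and it suffices to show that each of these $u+1$ terms is at most $1$.

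Next I would identify the module $V_{\omega_{s+u-i}+\omega_{m+n-u+i}}^G$. Since $V_{\omega_p}^G\cong\bigwedge^p\mathbb{C}^{n+m}$ for $1\le p\le n+m-1$ (with $\omega_0=\omega_{n+m}=0$), and $\omega_p+\omega_q$ with $p\le q$ is the Cartan component of $\bigwedge^p\mathbb{C}^{n+m}\otimes\bigwedge^q\mathbb{C}^{n+m}$, each summand is a known irreducible $G$-module whose $K=\mathrm{S}(\mathrm{U}(n)\times\mathrm{U}(m))$-branching is governed by the combinatorics of choosing rows from the first $n$ coordinates and rows from the last $m$ coordinates of the exterior powers. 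The key point is to show that $V_{\omega_s}^K\cong\bigwedge^s\mathbb{C}^n$ (the exterior power on the first block, with trivial $\mathrm{U}(m)$-action) occurs with multiplicity $\le 1$ in each such summand when restricted to $K$; this should follow either from a direct count using the Littlewood--Richardson-type rule for $\mathrm{GL}(n+m)\downarrow\mathrm{GL}(n)\times\mathrm{GL}(m)$ applied to the relevant two-column shapes, or more cheaply from Lemma~\ref{deitgkmmultifree}: since $V_{\omega_s}^K|_M$ splits multiplicity free, $\big[V_\lambda^G|_K:V_{\omega_s}^K\big]\le 1$ for \emph{all} $\lambda\in P_G^+$, in particular for each $\lambda=\omega_{s+u-i}+\omega_{m+n-u+i}$. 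Granting that, the corollary follows immediately by summing the $u+1$ bounds.

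I expect the only genuine subtlety to be the edge cases in the range of $i$: when $i$ is large the index $s+u-i$ can drop below the index $m+n-u+i$ only if one checks $s+u-i\le m+n-u+i$, i.e.\ $i\ge (s+2u-m-n)/2$, which holds automatically in the regime $s\le n\le m$; and when $s+u-i=0$ or when $m+n-u+i=n+m$ the fundamental weight degenerates to $0$ and the corresponding summand is $V_{\omega_j}^G$ for a single $j$, which is still covered by Lemma~\ref{deitgkmmultifree}. So the main obstacle is essentially bookkeeping: verifying that all $u+1$ summands in the decomposition of Theorem~\ref{gdec} are legitimately indexed and that Lemma~\ref{deitgkmmultifree} (or the explicit branching computation) applies to each of them uniformly. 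Once that is in place, the bound $u+1$ is simply the number of summands, and the proof is complete.
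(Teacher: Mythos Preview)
Your proposal is correct and follows essentially the same approach as the paper: decompose the tensor product via Theorem~\ref{gdec} into $u+1$ irreducible $G$-summands, then invoke Lemma~\ref{deitgkmmultifree} (with $b=0$) to bound the multiplicity of $V_{\omega_s}^K$ in each summand by~$1$. The paper's proof is exactly these two sentences. Your additional remarks on edge cases and the alternative via Littlewood--Richardson branching are not needed but do no harm; the only slip is your justification of the hypothesis $c\le n+m-d$ of Theorem~\ref{gdec} (you need $s+2u\le n+m$, not merely $s+u\le n+m$), which however holds in the intended range $0\le u\le n-s$ since then $s+2u\le n+u\le 2n\le n+m$.
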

\begin{proof}
By Theorem \ref{gdec}, we have
\[V_{\omega_{s+u}}^{G}\otimes V_{\omega_{m+n-u}}^{G}\cong\bigoplus_{p=0}^{u} V_{\omega_{s+u-p}+\omega_{m+n-u+p}}^{G}.\]
Also by Lemma \ref{deitgkmmultifree} it leads to $\big[V_{\omega_{s+u-p}+\omega_{m+n-p}}^{G}|_{K}:V_{\omega_{s}}^{K}\big]\leq1$ implying the corollary.
\end{proof}
\begin{Remark}\label{laplace}
We recall the Laplace expansion in \cite[p.~22]{Gant}. Define $\xi_{I}^{J}$ as a minor of a $p\times p$ matrix ${\rm U}$ for which $I=\{i_{1},i_{2},\dots,i_{t}\}$ and $J=\{j_{1},j_{2},\dots,j_{t}\}$ are ordered tuples. For a $t$-tuple $I=\{i_{1},i_{2},\dots,i_{t}\}$, the $(p-t)$-tuple $\bar{I}$ is the ordered tuple so that $I\cup\bar{I}=\{1,2,\dots,p\}$. Then for any fixed $t$ and denoting $b(I)=\sum_{q=1}^{t}i_{q}$, we have
\begin{equation}\label{laplaciandet}
\sum_{|J|=t} (-1)^{b(J)}\xi_{J}^{I}\xi_{\bar{J}}^{\bar{K}}
=(-1)^{b(I)}\delta_{IK}\det({\rm U}).
\end{equation}
\end{Remark}
For ease of expression, we define $\mathrm{N}=\{1,2,\dots,n\}$ and $\mathrm{M}=\{n+1,n+2,\dots,n+m\}$. For $P=\{p_{1},p_{2},\dots,p_{|P|}\}\subset \mathrm{N}$ with $p_{i}<p_{i+1}$, we define $e_{P}=e_{p_{1}}\wedge e_{p_{2}}\wedge\cdots\wedge e_{|P|}$, where $|P|$ is length of $P$. We use the notation $\mathrm{N}\setminus P$ as ordered pair of length $n-|P|$ and $e_{\mathrm{N}\setminus P}$ is defined analogously. We use the same notation for $e_{Q}$ with $Q\subset \mathrm{M}$ and $e_{\mathrm{M}\setminus Q}$. Let $\pi(g)$ with~$g\in G$ be the standard representation of $G$, then we have $\pi(k)v=\pi(k_1,k_2)v$ with $v\in V_{\mu}^{K}$ for $k=\left(\begin{smallmatrix}
k_1&0\\0&k_2\\
\end{smallmatrix}\right)\in \mathrm{S}(\mathrm{U}(n)\times \mathrm{U}(m))\subset G$.
\begin{Theorem}\label{kintertwiner}
For $s=0,1,2,\dots,n$ and $u=0,1,\dots,n-s$, let
\begin{equation}\label{minj}
 v^{H,k}=\sum_{\substack {P\subset \mathrm{N},\,|P|=k\\ Q\subset \mathrm{M},\,|Q|=u-k}} (-1)^{b(P)+b(Q)}e_{H}\wedge e_{P}\wedge e_{Q}\otimes e_{\mathrm{N}\backslash P}\wedge e_{\mathrm{M}\backslash Q}\in
 V_{\omega_{s+u}}^{G}\otimes V_{\omega_{m+n-u}}^{G},
\end{equation}
where $H=\{h_{1}<h_{2}<\cdots<h_{s}\}\subset \mathrm{N}$ and $k=0,1,\dots,u$. We define a linear map
\begin{align*}
h^{k}\colon \ V_{\omega_{s}}^{K}\cong {\bigwedge}^{s}\mathbb{C}^{n}&\rightarrow  V_{\omega_{s+u}}^{G}\otimes V_{\omega_{m+n-u}}^{G},\\
e_{H}&\mapsto  v^{H,k},
\end{align*}
then $h^{k}$ is a $K$-interwiner.
\end{Theorem}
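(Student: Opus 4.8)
The plan is to prove the $K$-equivariance of $h^k$ by exhibiting it as a composite of manifestly $K$-equivariant maps, which confines the sign bookkeeping to a single explicit identification. Throughout, identify $V_{\omega_p}^G$ with $\bigwedge^p\mathbb{C}^{n+m}$, put $N=n+m$, and view $\mathbb{C}^N=\mathbb{C}^n\oplus\mathbb{C}^m$ as a $K$-module through the block decomposition, so that $V_{\omega_s}^K\cong\bigwedge^s\mathbb{C}^n\subset\bigwedge^s\mathbb{C}^N$ and $g=\operatorname{diag}(g_1,g_2)\in K$ acts on $e_H$ (with $H\subset\mathrm N$) by $(\bigwedge^s g)e_H=(\bigwedge^s g_1)e_H$. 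As $\{e_H\mid |H|=s,\ H\subset\mathrm N\}$ is a basis of $V_{\omega_s}^K$, it suffices to check $h^k\big(\pi_{\omega_s}^K(g)e_H\big)=\big(\pi_{\omega_{s+u}}^G(g)\otimes\pi_{\omega_{m+n-u}}^G(g)\big)v^{H,k}$ for every $g\in K$.

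First I would package all the $v^{H,k}$ ($k=0,\dots,u$) into one ambient map. Let $\omega=e_1\wedge\dots\wedge e_N$, and for a $u$-subset $S\subset\{1,\dots,N\}$ let $\operatorname{sgn}(S,\bar S)=(-1)^{b(S)-u(u+1)/2}$ be the sign determined by $e_S\wedge e_{\bar S}=\operatorname{sgn}(S,\bar S)\,\omega$. Define
\[
\Psi\colon\ \textstyle\bigwedge^s\mathbb{C}^N\longrightarrow\bigwedge^{s+u}\mathbb{C}^N\otimes\bigwedge^{N-u}\mathbb{C}^N,\qquad \Psi(\xi)=\sum_{|S|=u}\operatorname{sgn}(S,\bar S)\,(\xi\wedge e_S)\otimes e_{\bar S}.
\]
Equivalently $\Psi(\xi)=(\text{exterior multiplication}\otimes\mathrm{id})(\xi\otimes\Delta_u\omega)$, with $\Delta_u\omega$ the bihomogeneous part of the exterior-algebra coproduct of $\omega$. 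Since exterior multiplication and the coproduct are $\mathrm{GL}(N)$-equivariant and $\omega$ spans a line on which $\mathrm{GL}(N)$ acts by $\det$, a one-line computation gives $\Psi\circ(\bigwedge^s g)=\det(g)^{-1}\big((\bigwedge^{s+u}g)\otimes(\bigwedge^{N-u}g)\big)\circ\Psi$ for $g\in\mathrm{GL}(N)$; as $\det\equiv1$ on $K\subset\mathrm{SU}(N)$, the restriction $\Psi|_{\bigwedge^s\mathbb{C}^n}$ is $K$-equivariant (indeed $\Psi$ is $G$-equivariant; only the splitting below is peculiar to $K$). In coordinates this equivariance is a Laplace expansion of the type of \eqref{laplaciandet}.

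Next I would evaluate $\Psi$ on the basis and match it against the $v^{H,k}$. Wedging with $e_H$ kills every $S$ meeting $H$, and for the remaining $S$ write $S=P\sqcup Q$ with $P=S\cap\mathrm N$, $Q=S\cap\mathrm M$, $k=|P|$. Since every $\mathrm N$-index precedes every $\mathrm M$-index, $e_S=e_P\wedge e_Q$ and $e_{\bar S}=e_{\mathrm N\setminus P}\wedge e_{\mathrm M\setminus Q}$ with no extra sign, and $\operatorname{sgn}(S,\bar S)=(-1)^{u(u+1)/2}(-1)^{b(P)+b(Q)}$ because $b(S)=b(P)+b(Q)$; grouping by $k$ yields
\[
\Psi(e_H)=(-1)^{u(u+1)/2}\sum_{k=0}^{u}v^{H,k}.
\]
Each $v^{H,k}$ lies in $W_k:=\big(\bigwedge^{s+k}\mathbb{C}^n\otimes\bigwedge^{u-k}\mathbb{C}^m\big)\otimes\big(\bigwedge^{n-k}\mathbb{C}^n\otimes\bigwedge^{m-u+k}\mathbb{C}^m\big)$, and the $W_k$ are distinct summands of the decomposition of $\bigwedge^{s+u}\mathbb{C}^N\otimes\bigwedge^{N-u}\mathbb{C}^N$ into bihomogeneous pieces for $\mathbb{C}^N=\mathbb{C}^n\oplus\mathbb{C}^m$, each of which is $K$-stable. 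Hence the projection $\operatorname{pr}_k$ onto $W_k$ is $K$-equivariant, $\operatorname{pr}_k(\Psi(e_H))=(-1)^{u(u+1)/2}v^{H,k}$, and so $h^k=(-1)^{u(u+1)/2}\operatorname{pr}_k\circ\big(\Psi|_{V_{\omega_s}^K}\big)$ is a composite of $K$-equivariant maps, which is the assertion.

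The only genuinely delicate point on this route is the sign matching of the previous paragraph — identifying $\operatorname{sgn}(S,\bar S)$ with $(-1)^{b(P)+b(Q)}$ up to the overall factor $(-1)^{u(u+1)/2}$ and verifying that splitting $\mathbb{C}^N=\mathbb{C}^n\oplus\mathbb{C}^m$ produces no cross-block sign — together with pinning down the convention $\pi_{\omega_s}^K(\operatorname{diag}(g_1,g_2))=\bigwedge^s g_1$. A more computational alternative, closer to the machinery of Remark \ref{laplace}, would instead apply $(\bigwedge^{s+u}g)\otimes(\bigwedge^{m+n-u}g)$ to $v^{H,k}$, factor each minor of the block matrix $g$ as a minor of $g_1$ times a minor of $g_2$, collapse the sum over $Q$ via \eqref{laplaciandet} to the scalar $\det(g_2)$, reorganize the sum over $P$ into $\sum_{H'}(g_1)_{H'}^{H}v^{H',k}$ by a Laplace-type expansion performed with the rows and columns labelled by $H$ held frozen (which produces $\det(g_1)$), and conclude using $\det(g_1)\det(g_2)=1$ on $K$. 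On that route I would expect this frozen-$H$ reorganization of the $P$-sum — a generalized rather than the bare Laplace expansion of \eqref{laplaciandet}, with its attendant signs — to be the main obstacle, which is precisely what passing through $\Psi$ avoids.
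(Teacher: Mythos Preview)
Your argument is correct and takes a genuinely different route from the paper. The paper proceeds exactly by the ``computational alternative'' you sketch at the end: it applies $\pi(k_1,k_2)$ directly to $v^{H,k}$, expands the action of $k_2$ on the $\mathrm{M}$-labelled wedges in minors, and uses the Laplace identity \eqref{laplaciandet} to collapse the $Q$-sum to the scalar $\det(\pi(k_2))$; then it repeats the maneuver for $k_1$ on the $\mathrm{N}$-labelled wedges (with $e_H$ carried along as a passenger) to produce $\det(\pi(k_1))$, and finishes with $\det(\pi(k_1))\det(\pi(k_2))=1$.

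Your approach packages all the $h^k$ simultaneously into the single $G$-equivariant map $\Psi(\xi)=(\mathrm{mult}\otimes\mathrm{id})(\xi\otimes\Delta_u\omega)$ and then recovers each $h^k$ by a $K$-equivariant projection onto a bihomogeneous piece of $\bigwedge^{s+u}\mathbb{C}^N\otimes\bigwedge^{N-u}\mathbb{C}^N$ for the splitting $\mathbb{C}^N=\mathbb{C}^n\oplus\mathbb{C}^m$. This has two dividends beyond cleanliness: it explains \emph{why} the particular sign $(-1)^{b(P)+b(Q)}$ is the right one (it is forced by $\Delta_u\omega$, up to the irrelevant global factor $(-1)^{u(u+1)/2}$), and it makes transparent the fact---used later in the paper in Remark~\ref{coefficients}---that $\sum_{k}v^{H,k}$ is itself the image of $e_H$ under a $G$-intertwiner, not merely a $K$-intertwiner. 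What the paper's direct computation buys is minimality of prerequisites: only the Laplace expansion \eqref{laplaciandet} is invoked, with no appeal to the Hopf structure on $\bigwedge^\bullet\mathbb{C}^N$, and the ``frozen-$H$'' reorganization you flag as the delicate step in that alternative in fact goes through smoothly because $\pi(k_1)e_H$ simply factors out of the $P$-sum before the Laplace identity is applied.
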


\begin{proof}
We have
\begin{align}
\big(\pi(k_{1},k_{2})h^{k}\big)(e_{H})&=\pi(k_{1},k_{2})\Bigg(\sum_{\substack {P\subset \mathrm{N}\\ Q\subset \mathrm{M}}} (-1)^{b(P)+b(Q)}e_{H}\wedge e_{P}\wedge e_{Q}\otimes e_{\mathrm{N}\backslash P}\wedge e_{\mathrm{M}\backslash Q}\Bigg)\nonumber\\
&=\sum_{P\subset \mathrm{N}} (-1)^{b(P)}\sum_{Q\subset \mathrm{M}} (-1)^{b(Q)}\nonumber\\
&\quad\times\pi(k_{1})e_{H}\wedge\pi(k_{1})e_{P}\wedge \pi(k_{2})e_{Q}\otimes \pi(k_{1})e_{\mathrm{N}\backslash P}\wedge \pi(k_{2})e_{\mathrm{M}\backslash Q}.\label{laplacianstep}
\end{align}
Since
\begin{equation*}
\pi(k_{2})e_{Q}=\sum_{\substack{R\subset \mathrm{M}\\|R|=|Q|}} \xi_{Q}^{R}(k_{2})e_{R},\qquad
\pi(k_{2})e_{\mathrm{M}\backslash Q}=\sum_{\substack{{\rm U}\subset \mathrm{M}\\|{\rm U}|=|\mathrm{M}\backslash Q|}} \xi_{\mathrm{M}\backslash Q}^{{\rm U}}(k_{2})e_{{\rm U}},
\end{equation*}
by the Laplace expansion of Remark \ref{laplace}, we see that \eqref{laplacianstep} equals
\begin{align*}
&\sum_{P\subset \mathrm{N}} (-1)^{b(P)}\sum_{Q\subset \mathrm{M}} (-1)^{b(Q)}\sum_{\substack{R\subset \mathrm{M}\\|R|=|Q|}} \xi_{Q}^{R}(k_{2})\sum_{\substack{{\rm U}\subset \mathrm{M}\\|{\rm U}|=|\mathrm{M}\backslash Q|}} \xi_{\mathrm{M}\backslash Q}^{{\rm U}}(k_{2})\\
&\qquad\quad{}\times\pi(k_{1})e_{H}\wedge\pi(k_{1})e_{P}\wedge e_{R}\otimes \pi(k_{1})e_{\mathrm{N}\backslash P}\wedge e_{{\rm U}}\\
&\qquad =\sum_{P\subset \mathrm{N}} (-1)^{b(P)}\sum_{\substack{R\subset \mathrm{M}\\|R|=|Q|}} \sum_{\substack{{\rm U}\subset \mathrm{M}\\|{\rm U}|=|\mathrm{M}\backslash Q|}} \Bigg(\sum_{Q\subset \mathrm{M}} (-1)^{b(Q)}\xi_{Q}^{R}(k_{2})\xi_{\mathrm{M}\backslash Q}^{{\rm U}}(k_{2})\Bigg)\\
&\qquad\quad{}\times\pi(k_{1})e_{H}\wedge\pi(k_{1})e_{P}\wedge e_{R}\otimes \pi(k_{1})e_{\mathrm{N}\backslash P}\wedge e_{{\rm U}}\\
&\qquad {}=\det(\pi(k_{2}))\sum_{P\subset \mathrm{N}} (-1)^{b(P)}\sum_{R\subset \mathrm{M}} (-1)^{b(R)}
\pi(k_{1})e_{H}\wedge\pi(k_{1})e_{P}\wedge e_{R}\otimes \pi(k_{1})e_{\mathrm{N}\backslash P}\wedge e_{\mathrm{M}\backslash R}
\end{align*}
using \eqref{laplaciandet}.

By a similar calculation and using the Laplace expansion for the action of $k_{1}$, we have the conclusion after renaming
\begin{gather*}
\det(\pi(k_{1}))\det(\pi(k_{2}))\sum_{P\subset \mathrm{N}} (-1)^{b(P)}\sum_{Q\subset \mathrm{M}} (-1)^{b(Q)}\pi(k_{1})e_{H}
\wedge e_{P}\wedge e_{Q}\otimes e_{\mathrm{N}\backslash P}\wedge e_{\mathrm{M}\backslash Q}\\
\quad =\sum_{\substack{P\subset \mathrm{N}\\ Q\subset \mathrm{M}}} (-1)^{b(P)+b(Q)}\pi(k_{1})e_{H}\wedge e_{P}\wedge e_{Q}\otimes e_{\mathrm{N}\backslash P}\wedge e_{\mathrm{M}\backslash Q}.
\end{gather*}
On the other hand, we have
\begin{gather*}
\big(h^{k}\pi(k_{1},k_{2})\big)(e_{H})
=h^{k}(\pi(k_{1})e_{H})
=\sum_{\substack{P\subset \mathrm{N}\\ Q\subset \mathrm{M}}} (-1)^{b(P)+b(Q)}\pi(k_{1})e_{H}\wedge e_{P}\wedge e_{Q}\otimes e_{\mathrm{N}\backslash P}\wedge e_{\mathrm{M}\backslash Q}.
\end{gather*}
Then $h^{k}$ is a $K$-intertwiner and this theorem is proved.
\end{proof}
\begin{Corollary}\label{cor:coe}
For any $H,H'\subset N$ with $|H|=|H'|=s$ and any $v^{H,k},v^{H',k'}\in V_{\omega_{s+u}}^{G}\otimes V_{\omega_{m+n-u}}^{G}$, we have
\[\big\langle v^{H,k},v^{H',k'}\big\rangle=\delta_{H,H'}\delta_{k,k'}\left(
 \begin{matrix}
 n-s \\
 k \\
 \end{matrix}
\right)\left(
 \begin{matrix}
 m \\
 u-k \\
 \end{matrix}
\right).\]
\end{Corollary}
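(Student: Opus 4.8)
The plan is to compute the inner product $\langle v^{H,k},v^{H',k'}\rangle$ directly from the defining formula \eqref{minj}, using the fact that the standard orthonormal basis $\{e_I\otimes e_J\}$ of $V_{\omega_{s+u}}^G\otimes V_{\omega_{m+n-u}}^G$ consists of pairwise orthogonal wedge products of the $e_i$'s. First I would expand
\[
\big\langle v^{H,k},v^{H',k'}\big\rangle=\sum_{\substack{P,P'\subset\mathrm{N},\,|P|=k,\,|P'|=k'\\ Q\subset\mathrm{M},\,|Q|=u-k,\,Q'\subset\mathrm{M},\,|Q'|=u-k'}}(-1)^{b(P)+b(Q)+b(P')+b(Q')}\big\langle e_H\wedge e_P\wedge e_Q,\,e_{H'}\wedge e_{P'}\wedge e_{Q'}\big\rangle\big\langle e_{\mathrm{N}\setminus P}\wedge e_{\mathrm{M}\setminus Q},\,e_{\mathrm{N}\setminus P'}\wedge e_{\mathrm{M}\setminus Q'}\big\rangle,
\]
and observe that a term survives only when the two index sets on each side literally coincide: $H\cup P=H'\cup P'$ (inside $\mathrm{N}$) and $Q=Q'$ (inside $\mathrm{M}$), together with $\mathrm{N}\setminus P=\mathrm{N}\setminus P'$ and $\mathrm{M}\setminus Q=\mathrm{M}\setminus Q'$, which is the same set of conditions. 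Matching cardinalities forces $k=k'$ and $u-k=u-k'$, so the Kronecker $\delta_{k,k'}$ drops out at once, and then $Q=Q'$ while $H\cup P=H'\cup P'$.

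Next I would extract the $\delta_{H,H'}$. If $H\neq H'$ but $H\cup P=H'\cup P'$ with $|H|=|H'|=s$ and $|P|=|P'|=k$, one could still in principle have matching unions, so this needs a short argument: I claim every surviving term actually has $H=H'$. The cleanest route is to note that in $v^{H,k}$ the first tensor factor always contains the block $e_H$ as a sub-wedge with $P$ disjoint from $H$, so $e_H\wedge e_P$ as an unordered set is $H\sqcup P$; for the pairing to be nonzero we need $H\sqcup P=H'\sqcup P'$ as sets. This alone does not force $H=H'$. However, the sign $(-1)^{b(P)+b(P')}$ combined with summation will produce cancellation unless $H=H'$: rewriting $e_H\wedge e_P=\pm e_{H\sqcup P}$ with a sign $\varepsilon(H,P)$ depending only on the interleaving of $H$ and $P$, the coefficient of a fixed basis vector $e_S\otimes e_{\bar S}$ (with $S=H\sqcup P\subset\mathrm N$, $|S|=s+k$) in $v^{H,k}$ is, up to a global sign, $\sum_{P:\,H\sqcup P=S}(-1)^{b(P)}\varepsilon(H,P)$, and one checks this equals $(-1)^{b(S\setminus H)}\varepsilon(H,S\setminus H)$ — a single term — so there is no cancellation and $S$ together with $H$ is recovered from the coefficient pattern; hence two different $H$'s give orthogonal vectors. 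I would phrase this as: fixing $S$, the $S$-component of $v^{H,k}$ is nonzero and proportional to $e_S\otimes e_{\bar S}$-type vectors ranging over $Q\subset\mathrm M$, and the $H$ labelling these components is determined, giving $\langle v^{H,k},v^{H',k'}\rangle=\delta_{H,H'}\delta_{k,k'}\cdot(\text{count})$.

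Finally, with $H=H'$, $k=k'$, I would count the number of surviving pairs $(P,Q)=(P',Q')$: since the signs square to $1$ and each basis vector $e_H\wedge e_P\wedge e_Q\otimes e_{\mathrm N\setminus P}\wedge e_{\mathrm M\setminus Q}$ is a unit vector appearing exactly once, the inner product equals the number of choices of $P\subset\mathrm N\setminus H$ with $|P|=k$ times the number of choices of $Q\subset\mathrm M$ with $|Q|=u-k$, that is $\binom{n-s}{k}\binom{m}{u-k}$, which is the claimed value.

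The main obstacle is the $\delta_{H,H'}$ factor: the naive "same index set" argument only gives $H\sqcup P=H'\sqcup P'$, not $H=H'$, so one genuinely needs the sign/cancellation bookkeeping (or, alternatively, the representation-theoretic fact that $h^k$ is a $K$-intertwiner into a multiplicity-free isotypic piece, forcing $\langle v^{H,k},v^{H',k'}\rangle$ to be a scalar multiple of the standard $\mathrm{U}(n)$-invariant pairing $\delta_{H,H'}$ on $\bigwedge^s\mathbb C^n$). I would most likely invoke the latter: by Theorem~\ref{kintertwiner} each $h^k$ is a $K$-map, the target decomposes by Theorem~\ref{gdec} with each summand containing $V_{\omega_s}^K$ at most once by Lemma~\ref{deitgkmmultifree}, and distinct $k$ land in orthogonal $G$-isotypic components; so the Gram matrix is forced to be diagonal in both $H$ and $k$, reducing the problem to evaluating a single diagonal entry $\langle v^{H,k},v^{H,k}\rangle$, which is the transparent count $\binom{n-s}{k}\binom{m}{u-k}$ above.
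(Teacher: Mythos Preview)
Your direct computation is the right idea and is essentially what the paper has in mind (the result is stated as a corollary without proof). However, you overlook a key simplification and, as a consequence, invent a difficulty that is not there.

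You correctly list the survival conditions coming from the two tensor factors, but then write that the conditions $\mathrm{N}\setminus P=\mathrm{N}\setminus P'$ and $\mathrm{M}\setminus Q=\mathrm{M}\setminus Q'$ are ``the same set of conditions'' as $H\cup P=H'\cup P'$ and $Q=Q'$. They are not: the second tensor factor gives the \emph{stronger} condition $P=P'$ (and $Q=Q'$), hence $k=k'$. Combining $P=P'$ with the first-factor condition $H\sqcup P=H'\sqcup P'$ (disjoint unions, since $e_H\wedge e_P=0$ unless $H\cap P=\varnothing$) immediately yields $H=H'$. No sign bookkeeping, no cancellation argument, and no appeal to Schur's lemma is needed; the $\delta_{H,H'}\delta_{k,k'}$ is forced term by term. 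After that, the diagonal value is exactly your count $\binom{n-s}{k}\binom{m}{u-k}$, since each surviving term contributes $1$ and $P$ ranges over $\binom{\mathrm{N}\setminus H}{k}$, $Q$ over $\binom{\mathrm{M}}{u-k}$.

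A separate caution about your alternative representation-theoretic route: it is \emph{not} true that ``distinct $k$ land in orthogonal $G$-isotypic components''. Remark~\ref{coefficients} shows precisely that the copy of $V_{\omega_s}^K$ inside a single irreducible $G$-summand $V_{\omega_{s+u-p}+\omega_{m+n-u+p}}^G$ is a nontrivial linear combination of the $v^{H,k}$ for varying $k$. The Schur-lemma argument does give you $\langle v^{H,k},v^{H',k}\rangle=c_k\,\delta_{H,H'}$ for fixed $k$, but it does not by itself produce $\delta_{k,k'}$; for that you need the elementary observation about the second tensor factor above.
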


\begin{Lemma}\label{lem:linearlycombinationofkmodule}
Each weight vector in $V_{\omega_{s}}^{K}\subset V_{\omega_{s+u}+\omega_{m+n-u}}^{G}\subset V_{\omega_{s+u}}^{G}\otimes V_{m+n-u}^{G}$ ($u=0,1,\dots,n-s$) can be written as linear combination of $v^{H,k}$ for $k=0,1,\dots,u$ in \eqref{minj}.
\end{Lemma}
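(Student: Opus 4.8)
The plan is to combine the multiplicity bound from Corollary~\ref{kmodulenumber} with the nondegeneracy information from Corollary~\ref{cor:coe}. First I would observe that, by Theorem~\ref{kintertwiner}, the $u+1$ vectors $v^{H,0},v^{H,1},\dots,v^{H,u}$ all lie in the image of a $K$-intertwiner $h^{k}$, so each $v^{H,k}$ spans (inside the copy of $V_{\omega_{s}}^{K}$ it generates) a $K$-submodule isomorphic to $V_{\omega_s}^K$, and the weight vector of weight $\eta_H$ in the $K$-type $V_{\omega_s}^K$ sitting inside $V_{\omega_{s+u}}^G\otimes V_{m+n-u}^G$ associated to $h^k$ is precisely $v^{H,k}$. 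By Corollary~\ref{cor:coe}, $\langle v^{H,k},v^{H,k'}\rangle=\delta_{k,k'}\binom{n-s}{k}\binom{m}{u-k}$, which is strictly positive for every admissible $k$; hence $v^{H,0},\dots,v^{H,u}$ are linearly independent, and in fact pairwise orthogonal, vectors of weight $\eta_H$ in $V_{\omega_{s+u}}^G\otimes V_{m+n-u}^G$.

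Next I would bound the dimension of the weight space. The weight $\eta_H$ of the fixed weight vector $e_H\in\bigwedge^s\mathbb{C}^n\subset V_{\omega_s}^K$ occurs in $V_{\omega_{s+u}}^G\otimes V_{m+n-u}^G$ only through $K$-types whose restriction to $M$ contains the corresponding $M$-character; since by Lemma~\ref{deitgkmmultifree} the $M$-character attached to $e_H$ appears with multiplicity one in $V_{\omega_s}^K$, the multiplicity of the weight $\eta_H$ in $V_{\omega_{s+u}}^G\otimes V_{m+n-u}^G$ equals the number of $K$-types $V_{\omega_s}^K$ occurring there (weighted by their own internal multiplicity of $\eta_H$, which is $1$). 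By Corollary~\ref{kmodulenumber} this number is at most $u+1$. Therefore the $\eta_H$-weight space of $V_{\omega_{s+u}}^G\otimes V_{m+n-u}^G$ has dimension at most $u+1$, and the $u+1$ orthogonal vectors $v^{H,k}$ produced above form a basis of it.

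Finally, any weight vector $w$ of weight $\eta_H$ lying in the copy $V_{\omega_s}^K\subset V_{\omega_{s+u}+\omega_{m+n-u}}^G\subset V_{\omega_{s+u}}^G\otimes V_{m+n-u}^G$ is in particular an element of this $(u+1)$-dimensional weight space, hence is a linear combination $w=\sum_{k=0}^u c_k v^{H,k}$, and the coefficients are recovered by $c_k=\langle w,v^{H,k}\rangle/\bigl(\binom{n-s}{k}\binom{m}{u-k}\bigr)$ using Corollary~\ref{cor:coe}. For a general weight vector of $V_{\omega_s}^K$ (i.e.\ one whose weight is not a priori of the extremal form $\eta_H$) I would reduce to this case by noting that every weight of $\bigwedge^s\mathbb{C}^n$ is of the form $\eta_H$ for a suitable $s$-subset $H$, so there is nothing further to check.

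The main obstacle I anticipate is the dimension count in the middle step: one must be careful that the $\eta_H$-weight multiplicity in the tensor product is genuinely controlled by the number of $K$-type copies of $V_{\omega_s}^K$ and not contaminated by other $K$-types of $K^{\mathbb C}$ (or by higher internal multiplicity of $\eta_H$ inside some $K$-type). This is exactly where the multiplicity-free restriction $V_{\omega_s}^K|_M$ and Lemma~\ref{deitgkmmultifree} are indispensable: they guarantee that the $M$-character of $e_H$ isolates a single line in each relevant $K$-type, so that the weight-space dimension and the $K$-type count agree, and Corollary~\ref{kmodulenumber} then closes the argument.
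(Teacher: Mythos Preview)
Your middle step contains a genuine gap: the $\eta_H$-weight space of the full tensor product $V_{\omega_{s+u}}^{G}\otimes V_{\omega_{m+n-u}}^{G}$ is \emph{not} of dimension $u+1$. A direct count shows that a basis weight vector $e_A\otimes e_B$ has $T$-weight $\eta_H$ exactly when $A\cap B=H$ and $A\cup B=\{1,\dots,n+m\}$, so the weight space has dimension $\binom{n+m-s}{u}$, which is in general much larger than $u+1$. Your attempted fix via the $M$-character does not help: Lemma~\ref{deitgkmmultifree} only tells you that the $M$-character of $e_H$ appears once in $V_{\omega_s}^K$, not that it is absent from every other $K$-type occurring in the tensor product; many other $K$-types do contain the same $T$-weight (and the same $M$-character).

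The repair is to abandon the weight-space count and argue on the level of the $K$-isotypic component, which is what the paper does. By Corollary~\ref{kmodulenumber} the multiplicity $[\,V_{\omega_{s+u}}^{G}\otimes V_{\omega_{m+n-u}}^{G}|_K:V_{\omega_s}^K\,]$ is at most $u+1$, and by Corollary~\ref{cor:coe} the $u+1$ intertwiners $h^0,\dots,h^u$ from Theorem~\ref{kintertwiner} are linearly independent, so their images span the entire $V_{\omega_s}^K$-isotypic component. The copy $V_{\omega_s}^K\subset V_{\omega_{s+u}+\omega_{m+n-u}}^G$ sits inside this isotypic component, hence equals $\sum_k c_k\,h^k(V_{\omega_s}^K)$ for some scalars $c_k$, and in particular each weight vector $e_H$ is sent to $\sum_k c_k v^{H,k}$. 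Since $\eta_H$ occurs with multiplicity one in $V_{\omega_s}^K$, this is automatic once the isotypic statement is established; no separate weight-space bound is needed.
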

\begin{proof}
By Corollary \ref{kmodulenumber}, we have the conclusion
\[\big[\big(V_{\omega_{s+u}}^{G}\otimes V_{m+n-u}^{G}\big)|_{K}:V_{\omega_{s}}^{K}\big]\leq u+1.\]
In Theorem \ref{kintertwiner}, we have found $u+1$ irreducible $K$-modules with highest weight $\omega_{s}$ in $V_{\omega_{s+u}}^{G}\otimes V_{\omega_{m+n-u}}^{G}$, so this lemma is proved.
\end{proof}
\begin{Remark}\label{coefficients}
Now we calculate the corresponding coefficients of Lemma \ref{lem:linearlycombinationofkmodule}. For $u=0$, we have
\[v_{0}^{H}=e_{h_{1}}\wedge e_{h_{2}}\wedge\cdots\wedge e_{h_{s}}\in V_{\omega_{s}}^{K}\subset V_{\omega_{s}}^{G}\]
using the standard embedding $\mathbb{C}^{n}\hookrightarrow\mathbb{C}^{n+m}$ and $\bigwedge^{s}\mathbb{C}^{n}\hookrightarrow\bigwedge^{s}\mathbb{C}^{n+m}$.

We have a $G$-intertwiner $\rho^{0}\colon V_{\omega_s}^G\rightarrow V_{\omega_{s+1}}^G\otimes V_{\omega_{n+m-1}}^G$ such that
\begin{align*}
\rho^{0}\big(v_{0}^{H}\big)={}&\rho^{0}\big(v_{0}^{H}\otimes e_{1}\wedge e_{2}\wedge\cdots\wedge e_{m+n}\big)\\
={}&\sum_{k=1}^{n}(-1)^{k}e_{h_{1}}\wedge e_{h_{2}}\wedge\cdots\wedge e_{h_{s}}\wedge e_{k}\otimes e_{1}\wedge e_{2}\wedge\cdots\wedge\widehat{e_{k}}\wedge\cdots\wedge e_{m+n}\\
&{}+\sum_{k=n+1}^{n+m}(-1)^{k}e_{h_{1}}\wedge e_{h_{2}}\wedge\cdots\wedge e_{h_{s}}\wedge e_{k}\otimes e_{1}\wedge e_{2}\wedge\cdots\wedge\widehat{e_{k}}\wedge\cdots\wedge e_{m+n}\\
={}& v^{H,1}+v^{H,0},
\end{align*}
where we use the notation of Theorem \ref{kintertwiner} and $\bigwedge^{n+m}\mathbb{C}^{n+m}$ being the trivial representation. So we have
\[\rho^{0}\big(v_{0}^{H}\big)\in V_{\omega_{s}}^{K}\subset V_{\omega_{s}}^{G}\subset V_{\omega_{s+1}}^{G}\otimes V_{\omega_{m+n-1}}^{G}.\]
Since, by Theorem \ref{gdec},
\[V_{\omega_{s+1}}^{G}\otimes V_{\omega_{m+n-1}}^{G}\cong V_{\omega_{s}}^{G}\oplus V_{\omega_{s+1}+\omega_{m+n-1}}^{G},\]
we know that there is an irreducible $K$-module with highest weight $\omega_{s}$ in $V_{\omega_{s+1}+\omega_{m+n-1}}^{G}$ by Lemma \ref{lem:linearlycombinationofkmodule}. Also the corresponding vector can be written as
\[v_{1}^{H}=c_{1}v^{H,1}+c_{0}v^{H,0}\]
and $v_{1}^{H}$ is orthogonal with all the vectors in $V_{\omega_{s}}^{G}$. So
\[\big\langle v_{0}^{H},v_{1}^{H}\big\rangle=(n-s)c_{1}+mc_{0}=0.\]
Then we can find the $K$-module with highest weight $\omega_{s}$ in $V_{\omega_{s+1}+\omega_{m+n-1}}^{G}$. This can be done by induction for more general $u$, but there seem no nice explicit expressions available.
\end{Remark}

Now we calculate the matrix-valued spherical functions. We consider the corresponding approximate function at first.

\begin{Definition}
For $\mu=\omega_s+b\omega_n$ and $V_{\mu}^{K}\cong\bigwedge^{s}\mathbb{C}^{n}\otimes\bigl(\bigwedge^{n}\mathbb{C}^{n}\bigr)^{\otimes b}$, let $\nu_i=\omega_{s+i}+\omega_{n+m-i}+b\omega_n$ and
\[W_{\nu_{i}}^{G}=V_{\omega_{s+i}}^{G}\otimes V_{\omega_{m+n-i}}^{G}\otimes \big(V_{\omega_{n}}^{G}\big)^{\otimes b},\]
then we define a map
\begin{align}
j_W\colon \ V_{\mu}^{K}&\rightarrow  W_{\nu_{i}}^{G},\nonumber\\
e_{H}\otimes (e_{\mathrm{N}})^{\otimes b}&\mapsto
\left(
 \begin{matrix}
 n-s \\
 i \\
 \end{matrix}
\right)^{-\frac{1}{2}}
\sum_{\substack{P\subset \mathrm{N}\\|P|=i}} (-1)^{b(P)}e_{H}\wedge e_{P}\otimes e_{\mathrm{N}\backslash P}\wedge e_\mathrm{M}\otimes (e_{\mathrm{N}})^{\otimes b}.\label{weightvectoreta}
\end{align}
Also we define the approximate function
\begin{equation}\label{approfuncQi}
Q^\mu_{\nu_{i}} \colon \ A \to \operatorname{End}(V^K_\mu),
\qquad
a\mapsto
 \left(
 \begin{matrix}
 n-s \\
 i \\
 \end{matrix}
\right)\cdot j_W^\ast \circ \pi_{W_{\nu_{i}}^{G}}(a) \circ j_W.
\end{equation}
\end{Definition}

\begin{Lemma}
The map $j_W$ is a unitary $K$-intertwiner.
\end{Lemma}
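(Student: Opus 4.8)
The plan is to verify the two required properties of $j_W$ separately: that it is a $K$-intertwiner and that it is an isometry (hence unitary onto its image). For the intertwining property, I would observe that the definition \eqref{weightvectoreta} is, up to the normalizing scalar $\binom{n-s}{i}^{-1/2}$ and the harmless tensor factor $(e_\mathrm{N})^{\otimes b}$, precisely the map $h^k$ of Theorem~\ref{kintertwiner} in the special case $u=i$, $k=i$, with the second $\mathrm{M}$-part summation collapsed because $|Q|=u-k=0$ forces $Q=\varnothing$ and $e_{\mathrm{M}\setminus Q}=e_\mathrm{M}$. Thus on the $V^G_{\omega_{s+i}}\otimes V^G_{\omega_{m+n-i}}$ factor the $K$-equivariance is exactly the statement of Theorem~\ref{kintertwiner}; on each of the $b$ copies of $V^G_{\omega_n}$ the vector $e_\mathrm{N}=e_1\wedge\cdots\wedge e_n$ spans a one-dimensional $K$-submodule on which $K=\mathrm{S}(\mathrm{U}(n)\times\mathrm{U}(m))$ acts by the character $k\mapsto\det(k_1)$, which is precisely the character by which $K$ acts on $\bigwedge^n\mathbb{C}^n$. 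Since $V^K_\mu\cong\bigwedge^s\mathbb{C}^n\otimes(\bigwedge^n\mathbb{C}^n)^{\otimes b}$ as a $K$-module, tensoring the intertwiner $h^i$ with the $b$-fold character thus gives a $K$-intertwiner $V^K_\mu\to W^G_{\nu_i}$, which is $j_W$.

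For unitarity it suffices, since $V^K_\mu$ is irreducible, to check that $j_W$ preserves norms on a single weight vector, e.g.\ $e_H\otimes(e_\mathrm{N})^{\otimes b}$ with $|H|=s$. Using the $G$-invariant inner products on the tensor factors and the fact that the wedge vectors $e_H\wedge e_P\otimes e_{\mathrm{N}\setminus P}\wedge e_\mathrm{M}$ appearing in \eqref{weightvectoreta} are mutually orthogonal for distinct $P\subset\mathrm{N}$ of size $i$ (they involve distinct basis wedge vectors), I compute
\[
\big\|j_W(e_H\otimes(e_\mathrm{N})^{\otimes b})\big\|^2
=\binom{n-s}{i}^{-1}\sum_{\substack{P\subset\mathrm{N}\\|P|=i}}\big\|e_H\wedge e_P\big\|^2\,\big\|e_{\mathrm{N}\setminus P}\wedge e_\mathrm{M}\big\|^2\cdot\big\|e_\mathrm{N}\big\|^{2b}.
\]
Each $e_H\wedge e_P$ is either zero (if $P\cap H\neq\varnothing$) or a unit basis wedge vector, and likewise $e_{\mathrm{N}\setminus P}\wedge e_\mathrm{M}$; the number of $P\subset\mathrm{N}$ with $|P|=i$ and $P\cap H=\varnothing$ is exactly $\binom{n-s}{i}$, so the sum equals $\binom{n-s}{i}$ and the whole expression equals $1=\|e_H\otimes(e_\mathrm{N})^{\otimes b}\|^2$. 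Alternatively one can just quote Corollary~\ref{cor:coe} with $u=i$, $k=k'=i$, $H=H'$, which gives $\langle v^{H,i},v^{H,i}\rangle=\binom{n-s}{i}\binom{m}{0}=\binom{n-s}{i}$ directly. Hence $j_W$ is norm-preserving on weight vectors, therefore an isometry, therefore unitary.

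The only point requiring a little care — and the one I would flag as the main (minor) obstacle — is the bookkeeping in identifying \eqref{weightvectoreta} with the $u=k=i$, $Q=\varnothing$ specialization of $v^{H,k}$ in \eqref{minj}, including the matching of the sign conventions $(-1)^{b(P)}$ and the normalization constants, and checking that the image indeed lands in the intended irreducible summand; but all of this is routine given Theorem~\ref{kintertwiner} and Corollary~\ref{cor:coe}, so the lemma follows.
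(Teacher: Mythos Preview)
Your proposal is correct and follows essentially the same approach as the paper's proof, which simply cites Theorem~\ref{kintertwiner} and Corollary~\ref{cor:coe} with $u=k=i$ together with the fact that $\bigwedge^n\mathbb{C}^n\to V^G_{\omega_n}\cong\bigwedge^n\mathbb{C}^{n+m}$ is a $K$-intertwiner. Your write-up just unpacks these citations in more detail; the one extraneous remark is about the image landing in ``the intended irreducible summand'' --- no such claim is needed, since $j_W$ is only required to land in the reducible module $W^G_{\nu_i}$.
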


\begin{proof}
This follows from Theorem \ref{kintertwiner} and Corollary \ref{cor:coe} with $u=k=i$, and $\bigwedge^{n}\mathbb{C}^{n}\mapsto V_{\omega_{n}}^{G}\cong\bigwedge^{n}\mathbb{C}^{n+m}$ being a $K$-intertwiner.
\end{proof}

\begin{Remark}\label{rem:extendmultifree}
By Lemma \ref{deitgkmmultifree}, we know that $e_{H}\otimes (e_{\mathrm{N}})^{\otimes b}$ is a weight vector of $V_{\mu}^{K}$. Also $e_{H}\otimes (e_{\mathrm{N}})^{\otimes b}$ generates a one-dimensional $M$-module and $V_{\mu}^{K}|_{M}$ splits multiplicity free.
\end{Remark}

\begin{Lemma}\label{Qomegas+bomeganentry}
Let $a_{\mathbf{t}}\in A$ be as in \eqref{atinA}, then $Q^\mu_{\nu_{i}}(a_{\mathbf{t}})$ is a diagonal matrix-valued function and
\[\left(
 \begin{matrix}
 n-s \\
 i \\
 \end{matrix}
\right)\frac{\big\langle\pi(a_{\mathbf{t}})j_W\big(e_{H}\otimes(e_{N})^{\otimes b}\big),j_W\big(e_{H'}\otimes(e_{N})^{\otimes b}\big)\big\rangle}{\big\langle j_W\big(e_{H}\otimes(e_{N})^{\otimes b}\big),j_W\big(e_{H}\otimes(e_{N})^{\otimes b}\big)\big\rangle}
=\delta_{HH'}\cos t_{H}\cos^{b} t_{N}\sum_{\substack{I\in N\backslash H\\|I|=i}}  \cos^{2} t_{I}.\]
Moreover, the entry corresponding to $v=e_{H}\otimes(e_{N})^{\otimes b}$ is $\cos t_{H}\cos^{b} t_{N}\sum_{\substack{I\in N\backslash H\\|I|=i}} \cos^{2} t_{I}$.
\end{Lemma}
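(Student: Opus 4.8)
The plan is to compute the matrix entries of $Q^\mu_{\nu_i}(a_{\mathbf t})$ directly from the definition \eqref{approfuncQi}, using Remark~\ref{sphericalfunctionacting} applied to the unitary $K$-intertwiner $j_W$. Since $j_W$ is unitary, $\langle j_W(e_H\otimes(e_N)^{\otimes b}),j_W(e_{H'}\otimes(e_N)^{\otimes b})\rangle=\delta_{HH'}$, so the denominator in the displayed ratio equals $1$ and the whole claim reduces to evaluating the numerator inner product and multiplying by $\binom{n-s}{i}$; the factor $\binom{n-s}{i}$ is exactly what cancels the normalising constant $\binom{n-s}{i}^{-1/2}$ appearing twice in \eqref{weightvectoreta}, so it suffices to compute $\langle\pi(a_{\mathbf t})(w_H),w_{H'}\rangle$ where $w_H=\sum_{|P|=i}(-1)^{b(P)}e_H\wedge e_P\otimes e_{N\setminus P}\wedge e_M\otimes(e_N)^{\otimes b}$ and show it equals $\binom{n-s}{i}\,\delta_{HH'}\cos t_H\cos^b t_N\sum_{I\subset N\setminus H,\,|I|=i}\cos^2 t_I$.

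First I would use the product structure $W_{\nu_i}^G=V_{\omega_{s+i}}^G\otimes V_{\omega_{m+n-i}}^G\otimes(V_{\omega_n}^G)^{\otimes b}$, so that $\pi(a_{\mathbf t})$ acts factor-wise and the $(V_{\omega_n}^G)^{\otimes b}$ part contributes $\langle\pi(a_{\mathbf t})(e_N)^{\otimes b},(e_N)^{\otimes b}\rangle=(\det$ of the $n\times n$ block$)^b$. From the explicit form of $a_{\mathbf t}$ in \eqref{atinA} one reads off that $\pi(a_{\mathbf t})e_j=\cos t_j\,e_j+(\text{terms in }e_{2n+1-j}\text{ or }e_{n+m+1-j})$ for the relevant indices, so the relevant minors of $\pi(a_{\mathbf t})$ acting on wedge powers split according to which coordinates ``stay'' and which get sent across the block. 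Concretely I expand $\pi(a_{\mathbf t})(e_H\wedge e_P)$ and $\pi(a_{\mathbf t})(e_{N\setminus P}\wedge e_M)$ in the standard bases of $\bigwedge^{s+i}\mathbb C^{n+m}$ and $\bigwedge^{m+n-i}\mathbb C^{n+m}$, then pair against $e_{H'}\wedge e_{P'}$ and $e_{N\setminus P'}\wedge e_M$, and sum over $P,P'$.

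The key simplification is that in the pairing, for the $e_M$-wedge factor of length $m+n-i$ to have nonzero inner product with something of the form $e_{N\setminus P'}\wedge e_M$, the image vectors must involve all of $e_{n+1},\dots,e_{n+m}$; because $a_{\mathbf t}$ acts as the identity on the middle $I_{m-n}$ block and mixes only the first $n$ coordinates with the last $n$, this forces the surviving contribution to be the ``diagonal'' one, yielding $\cos$-factors on the coordinates in $H$ and in $N\setminus P$ and identity on $M$. After using the Laplace-type orthogonality (Remark~\ref{laplace}) or just direct index bookkeeping, the cross terms vanish, one gets $\delta_{HH'}$, the sum over $P$ collapses to a sum over the ``reflected'' subset, and relabelling $I=N\setminus(H\cup P)^c\cap N$ — i.e.\ the $i$-subset of $N\setminus H$ whose coordinates are \emph{not} sent across — produces $\sum_{I\subset N\setminus H,\,|I|=i}\cos^2 t_I$ with $\binom{n-s}{i}$ appearing as an overall multiplicity from the remaining free choices, consistent with Corollary~\ref{cor:coe}. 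Diagonality of $Q^\mu_{\nu_i}(a_{\mathbf t})$ in the weight basis $\{e_H\otimes(e_N)^{\otimes b}\}$ is then immediate from $\delta_{HH'}$, and the final sentence is just the statement that the $(H,H)$-entry is $\cos t_H\cos^b t_N\sum_{I\subset N\setminus H,\,|I|=i}\cos^2 t_I$.

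\textbf{Main obstacle.} The genuinely delicate point is the combinatorial bookkeeping of signs and minors when expanding $\pi(a_{\mathbf t})$ on the two wedge factors and matching them: one must verify that every off-diagonal contribution (where some coordinate of $H$ or of $N\setminus P$ is sent to its reflected partner rather than staying put) is killed either by a sign cancellation in the sum over $P$ or by a vanishing inner product coming from the $e_M$-factor constraint. Getting the precise subset $I$ over which the final sum ranges — and checking that its complement in $N\setminus H$ is what carries the $\cos^0=1$ factors while the multiplicity $\binom{n-s}{i}$ is absorbed correctly — is where care is needed; I expect to handle it by reducing, via the factorisation of $a_{\mathbf t}$ into $2\times2$ rotation blocks on the pairs $(j,2n+1-j)$ together with the identity on the middle block, to an elementary computation of $\langle\pi(a_{\mathbf t})v^{H,i},v^{H',i}\rangle$ that was essentially already packaged in Corollary~\ref{cor:coe} at $\mathbf t=0$.
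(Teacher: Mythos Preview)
Your direct-computation strategy can be made to work, but the paper takes a much shorter route for the off-diagonal vanishing, and your description of the diagonal step contains errors.

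For $H\neq H'$ the paper computes nothing: it invokes Remark~\ref{rem:extendmultifree} to observe that $Q^\mu_{\nu_i}(a_{\mathbf t})\in\operatorname{End}_M(V_\mu^K)$, and since each $e_H\otimes(e_N)^{\otimes b}$ spans a distinct one-dimensional $M$-isotype, Schur's lemma forces diagonality immediately. Your ``main obstacle'' is thereby bypassed entirely. If you insist on doing it by hand, the mechanism is \emph{not} a sign cancellation in the sum over $P$ or a Laplace identity: the second tensor factor already vanishes term-by-term whenever $P\neq P'$, because the minor of $a_{\mathbf t}$ with rows $(N\setminus P)\cup M$ and columns $(N\setminus P')\cup M$ has two proportional rows (rows $j$ and $m+n+1-j$ for any $j\in P'\setminus P$, both supported only in column $m+n+1-j$); once $P=P'$, the first factor then forces $H\cup P=H'\cup P$, hence $H=H'$.

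For the diagonal case your account is garbled. There is no ``$\binom{n-s}{i}$ appearing as an overall multiplicity from the remaining free choices'', and no relabelling is needed. With $H=H'$ the first factor (indices all in $N$, where $a_{\mathbf t}$ is diagonal) already gives $\delta_{PP'}\cos t_{H\cup P}$, and the second factor is the minor on $(N\setminus P)\cup M$, which equals $\prod_{j\in P}\cos t_j$ since each pair $(j,m+n+1-j)$ with $j\in N\setminus P$ contributes a $2\times2$ block of determinant $1$. Thus the single term is $\cos t_H\cos^2 t_P$, the sum over $P\subset N\setminus H$ gives $\sum_P\cos^2 t_P$ directly, and the $(e_N)^{\otimes b}$ factor supplies $\cos^b t_N$. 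The $\binom{n-s}{i}$ in the statement is purely the normalisation from \eqref{weightvectoreta} and \eqref{approfuncQi}, exactly as you said at the outset; it does not re-enter the combinatorics.
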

\begin{proof}
By Remark \ref{rem:extendmultifree} and $Q_{\nu_{i}}^{\mu}(a_{\mathbf{t}})$ being an $M$-intertwiner, we find
\[\big\langle\pi(a_{\mathbf{t}})j_W\big(e_{H}\otimes(e_{N})^{\otimes b}\big),j_W\big(e_{H'}\otimes(e_{N})^{\otimes b}\big)\big\rangle=0\]
for $H\neq H'$.

In case $H=H'$ and $P,P'\subset N\backslash H$, we have
\begin{gather*}
\langle\pi(a_{\mathbf{t}})e_{H}\wedge e_{P}\otimes e_{N\backslash P}\wedge e_{n+1}\wedge e_{n+2}\wedge\cdots\wedge e_{n+m}, e_{H}\wedge e_{P'}\otimes e_{N\backslash P'}\wedge e_{n+1}\wedge e_{n+2}\wedge\cdots\\
\qquad{}\wedge e_{n+m}\rangle
=\delta_{PP'}\cos t_{H}\cos^{2}t_{P}.
\end{gather*}
This gives
\[\left(
 \begin{matrix}
 n-s \\
 i \\
 \end{matrix}
\right)\frac{\big\langle\pi(a_{\mathbf{t}})j_W\big(e_{H}\otimes(e_{N})^{\otimes b}\big),j_W\big(e_{H'}\otimes(e_{N})^{\otimes b}\big)\big\rangle}{\big\langle j_W\big(e_{H}\otimes(e_{N})^{\otimes b}\big),j_W\big(e_{H'}\otimes(e_{N})^{\otimes b}\big)\big\rangle}
=\cos t_{H}\cos^{b} t_{N}\sum_{\substack{I\in N\backslash H\\|I|=i}} \cos^{2} t_{I},\]
which proves the lemma.
\end{proof}

\begin{Remark}\label{nueqmu}
For $i=0$, i.e., $\nu_0=\mu$,
we have the tensor product decomposition
\[V_{\omega_s}^G\otimes\big(V_{\omega_n}^G\big)^{\otimes b}\cong V_{\nu_0}^G\oplus\bigoplus_{\lambda'\precneqq\nu_0} m_{\lambda'}V_{\lambda'}^G,\]
and it leads to
\[Q^\mu_{\nu_{0}}(a_{\mathbf{t}})= d_{\nu_0}\Phi_{\nu_{0}}^{\mu}(a_{\mathbf{t}})+\sum_{\lambda'\precneqq\nu_0} d_{\lambda'}\Phi_{\lambda'}^{\mu}(a_{\mathbf{t}}),\qquad d_{\lambda'}\in\mathbb{C}.\]
Since the weight vector of $V_\mu^K$ is also the weight vector of $V_{\lambda'}^G\supset V_{\mu}^K$, we have
 $V_\mu^K\nsubseteq V_{\lambda'}^G$ for~$\lambda'\precneqq\nu_0$. So $d_{\lambda'}=0$ for~$\lambda'\precneqq\nu_0$. In this case, we have $\nu_0\in P_G^+(\mu)$ and $\Phi_{\nu_{0}}^{\mu}(a_{\mathbf{t}})=Q^\mu_{\nu_{0}}(a_{\mathbf{t}})$.
\end{Remark}
By considering the total degree, we see that $Q^\mu_{\nu_{i}}(a_{\mathbf{t}})$'s, $i=0,1,\dots,n-s$, are linearly independent.
\begin{Remark}\label{rem:weylgroupelementaction}
For the matrix spherical function $\Phi_{\lambda}^{\mu}(a_{\mathbf{t}})\in \operatorname{End}_{M}\big(V_{\mu}^{K}\big)$, we have
 \[\Phi_{\lambda}^{\mu}(a_{\mathbf{t}}) e_{1}\wedge e_{2}\wedge\cdots\wedge e_{s}\otimes (e_{\mathrm{N}})^{\otimes b}=P(\cos \mathbf{t})e_{1}\wedge e_{2}\wedge\cdots\wedge e_{s}\otimes (e_{\mathrm{N}})^{\otimes b},\]
where $P(\cos \mathbf{t})$ is a polynomial in $\cos t_i$'s.

For an $s$-tuple $H=\{h_{1},h_{2},\dots,h_{s}\}$ with $h_{i}<h_{i+1}$, the $(n-s)$-tuple $\bar{H}$ is the ordered tuple so that $\bar{H}=\{h_{s+1}, h_{s+2},\dots,h_{n}\}$ with $h_{i}<h_{i+1}$ and $H\cup\bar{H}=\mathrm{N}$. Let $e_{H}\otimes (e_{\mathrm{N}})^{\otimes b}\in V_{\mu}^{K}$ and $n_w\in N_{K'}(A')$ in Lemma \ref{lem:NKa} be a representative of
\[w=\left(
 \begin{matrix}
 1 & 2 & \cdots & s & s+1 & s+2 & \cdots & n \\
 h_{1} & h_{2} & \cdots & h_{s} & h_{s+1} & h_{s+2} & \cdots & h_{n} \\
 \end{matrix}
 \right)\]
such that
\[n_{w}^{-1}a_{(t_{1},t_{2},\dots,t_{n})}n_{w}=a_{(t_{h_{1}},t_{h_{2}},\dots,t_{h_{n}})}\]
and
\[\pi_\mu^K(n_{w})e_{1}\wedge e_{2}\wedge\cdots\wedge e_{s}\otimes (e_{\mathrm{N}})^{\otimes b}=e_{h_{1}}\wedge e_{h_{2}}\wedge\cdots\wedge e_{h_{s}}\otimes (e_{\mathrm{N}})^{\otimes b}.\]
It leads to
\begin{align*}
\Phi_{\lambda}^{\mu}(a_{\mathbf{t}})e_{h_{1}}\!\wedge e_{h_{2}}\!\wedge\!\cdots\!\wedge e_{h_{s}}\otimes\! \!(e_{\mathrm{N}})^{\otimes b}
\!&=\!\pi_\mu^K(n_{w})\pi_\mu^K\big(n_{w}^{-1}\big)\Phi_{\lambda}^{\mu}(a_{\mathbf{t}})\pi_\mu^K(n_{w})e_{1}\!\wedge e_{2}\!\wedge\!\cdots\!\wedge e_{s}\!\otimes\! \!(e_{\mathrm{N}})^{\otimes b}\!\\
&=\pi_\mu^K(n_{w})w(P)(\cos \mathbf{t})e_{1}\wedge e_{2}\wedge\cdots\wedge e_{s}\otimes (e_{\mathrm{N}})^{\otimes b}\\
&=w(P)(\cos \mathbf{t})e_{h_{1}}\wedge e_{h_{2}}\wedge\cdots\wedge e_{h_{s}}\otimes (e_{\mathrm{N}})^{\otimes b}.
\end{align*}
In this equation, $w(P)(\cos \mathbf{t})$ is the polynomial in $\cos t_{i}$'s where we let the Weyl group element~$w$ acts on $P(\cos \mathbf{t})$.

Since all the $M$-types in $V_\mu^K$, each $M$ type being $1$-dimensional and spanned by $e_H\otimes (e_\mathrm{N})^{\otimes b}$, are in a single Weyl group orbit for the reduced Weyl group, we only need to calculate the first entry of the corresponding spherical function and we can get the other entries by the action of the reduced Weyl group.
\end{Remark}

\begin{Lemma}\label{lem:rqi}
We have
\[(RQ^\mu_{\nu_{i}})(a_{\mathbf{t}})=c_{\nu_{i}}Q^\mu_{\nu_{i}}(a_{\mathbf{t}})
-2(n-s-i+1)(b+n-s-i+1)Q^\mu_{\nu_{i-1}}(a_{\mathbf{t}})\]
with
$c_{\nu_{i}}=\langle\nu_{i},\nu_{i}\rangle+2\langle\nu_{i},\rho\rangle$ and we define $Q_{\nu_{-1}}^{\mu}\equiv0$.
\end{Lemma}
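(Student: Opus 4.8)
The plan is to compute the action of the radial Casimir operator $R$ directly on the explicit diagonal matrix entries of $Q^\mu_{\nu_i}$ obtained in Lemma~\ref{Qomegas+bomeganentry}. By Remark~\ref{rem:weylgroupelementaction}, it suffices to work out the action on the first diagonal entry, the one corresponding to the weight vector $v=e_1\wedge\cdots\wedge e_s\otimes(e_{\mathrm N})^{\otimes b}$; by Lemma~\ref{Qomegas+bomeganentry} this entry is the scalar function
\[
f_i(\mathbf t)=\cos t_1\cdots\cos t_s\,\prod_{j=1}^n\cos^b t_j\;\sum_{\substack{I\subset\{s+1,\dots,n\}\\|I|=i}}\prod_{k\in I}\cos^2 t_k,
\]
so I must show $R f_i = c_{\nu_i} f_i - 2(n-s-i+1)(b+n-s-i+1) f_{i-1}$, where $R$ is the explicit scalar radial Casimir operator computed in Appendix~\ref{app:caloper}. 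Here I would invoke the concrete second-order differential operator expression for $R$ restricted to the weight-$v$ component of $\operatorname{End}(V_\mu^K)$, which is known from that appendix in terms of $\partial/\partial t_j$ and the weights $\delta(a_{\mathbf t})$ as in Remark~\ref{rem:ortho}.

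The key computational steps, in order: first, record $R$ acting on scalar functions of the form $g(\mathbf t)=\prod_j\cos^{c_j}t_j\cdot(\text{monomial in }\cos^2 t_k)$, using that each elementary symmetric combination $\sum_{|I|=i}\prod_{k\in I}\cos^2 t_k$ is a sum of such monomials; second, note that $f_i$ is an eigenfunction-like object built from these monomials, so $Rf_i$ is again a combination of the same building blocks; third, identify the "top" part of $Rf_i$, which by construction reproduces $c_{\nu_i}f_i$ (this is exactly because $Q^\mu_{\nu_0}=\Phi^\mu_{\nu_0}$ is a genuine spherical function by Remark~\ref{nueqmu}, and more generally because the leading symbol of $R$ is diagonal on the $B(\mu)\times\mathbb N^n$ grading with eigenvalue $c_{\nu_i}$ on the piece of total degree matching $\nu_i$); fourth, collect the remaining lower-order terms and check, by a direct though tedious binomial identity, that they assemble precisely into $-2(n-s-i+1)(b+n-s-i+1)f_{i-1}$. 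The coefficient $-2(n-s-i+1)(b+n-s-i+1)$ should emerge from two sources: the factor $(n-s-i+1)$ from the number of ways of enlarging an $(i-1)$-subset of $\{s+1,\dots,n\}$ to an $i$-subset, and the factor $(b+n-s-i+1)$ from differentiating the $\cos^b t_j$ and $\cos^2 t_k$ factors against the first-order (drift) part of $R$ encoding $\delta(a_{\mathbf t})$.

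An alternative, more structural route avoids differential-operator bookkeeping: realize $Q^\mu_{\nu_i}$ via the $K$-intertwiner $j_W$ into $W^G_{\nu_i}=V^G_{\omega_{s+i}}\otimes V^G_{\omega_{m+n-i}}\otimes(V^G_{\omega_n})^{\otimes b}$, on which the Casimir acts by the scalar $c_{\nu_i}$ on the $V^G_{\nu_i}$-summand and by strictly smaller scalars elsewhere; then use the explicit decomposition in Theorem~\ref{gdec}, $V^G_{\omega_{s+i}}\otimes V^G_{\omega_{m+n-i}}\cong\bigoplus_{p=0}^{i}V^G_{\omega_{s+i-p}+\omega_{m+n-i+p}}$, together with $j_W^\ast\circ(\text{Casimir})\circ j_W$ and the projection identities from Corollary~\ref{cor:coe} and Remark~\ref{coefficients}, to express $R Q^\mu_{\nu_i}$ as $c_{\nu_i}Q^\mu_{\nu_i}$ plus a multiple of $Q^\mu_{\nu_{i-1}}$ (the only lower term surviving because, as in Remark~\ref{nueqmu}, no smaller $V^G_{\lambda'}$ contains $V^K_\mu$). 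I would still need to pin down the proportionality constant, and I expect that to be the main obstacle in either approach: the bare existence of the recursion $R Q^\mu_{\nu_i}=c_{\nu_i}Q^\mu_{\nu_i}+\gamma_i Q^\mu_{\nu_{i-1}}$ is comparatively soft, but extracting the exact value $\gamma_i=-2(n-s-i+1)(b+n-s-i+1)$ forces a careful normalization computation — tracking the binomial factors $\binom{n-s}{i}^{\pm1}$ and $\binom{m}{u-k}$ from the definition of $j_W$ and Corollary~\ref{cor:coe}, and comparing against the known eigenvalues $c_{\nu_i}-c_{\nu_{i-1}}=\langle\nu_i+\nu_{i-1}+2\rho,\nu_i-\nu_{i-1}\rangle$ from Lemma~\ref{eigveldiff}. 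I would carry this out by evaluating both sides on the single weight vector $v=e_1\wedge\cdots\wedge e_s\otimes(e_{\mathrm N})^{\otimes b}$ at a general $a_{\mathbf t}$, reducing everything to the scalar identity for $f_i$ above and a finite binomial sum.
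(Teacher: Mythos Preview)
Your first approach is essentially the paper's own proof: the paper simply states that $R$ is given explicitly in Appendix~\ref{app:rofcasimiroperator}, records the elementary symmetric polynomial identities \eqref{symmetripolynomialrelation}--\eqref{symmetripolynomialrelation2} in Appendix~\ref{app:casimirop}, and then defers the verification to computer algebra (\textsc{Maxima}), whereas you spell out in words how that computation is organized on the entry $f_i$. Your alternative structural route via the tensor decomposition of $W^G_{\nu_i}$ is a genuinely different perspective and correctly isolates why only a single lower term $Q^\mu_{\nu_{i-1}}$ can appear, but as you yourself note, pinning down the constant $-2(n-s-i+1)(b+n-s-i+1)$ still forces you back to the same scalar computation on $f_i$, so in the end both routes converge on the calculation the paper carries out by machine.
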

\begin{proof}
The expression of $R$ is given in Appendix~\ref{app:rofcasimiroperator}. This lemma can be proved by using computer algebra, in particular \textsc{Maxima}, and some intermediate calculations are shown in Appendix~\ref{app:casimirop}.
\end{proof}

\begin{Remark}\label{eigenvalueofomegas}
We have $c_{\nu_{i}}>c_{\nu_{i-1}}$ by Lemma \ref{eigveldiff} since $\nu_{i}-\nu_{i-1}\succneqq0$.
\end{Remark}

\subsection[Example: mu=0]{Example: $\boldsymbol{\mu=0}$}
Now we calculate the zonal spherical function $\phi_i$ corresponding to $\omega_{i}+\omega_{m+n-i}$, $i=1,2,\dots,n$, by
\begin{align}
\phi_{i}\colon\ A&\rightarrow P(\cos\mathbf{t}),\nonumber\\
a_{\mathbf{t}}&\mapsto\frac{\langle\pi(a_{\mathbf{t}})v_{i},v_{i}\rangle}{\langle v_{i},v_{i}\rangle},\label{zonalsphericalfunction}
\end{align}
where $v_i$ is the $K$-fixed vector in $V_{\omega_i+\omega_{n+m-i}}^G$. Recalling Section \ref{sec:radialpartofthecasimiroperator}, $\phi_{i}$, $i=1,2,\dots,n$, is an eigenfunction of the radial part $R$ of the Casimir operator with eigenvalue
\begin{gather*}
d_{i}=\langle\omega_{i}+\omega_{m+n-i},\omega_{i}+\omega_{m+n-i}\rangle+2\langle\omega_{i}+\omega_{m+n-i},\rho\rangle
=2i(m+n-i+1).
\end{gather*}

Instead of calculating the zonal spherical function $\phi_{i}$, we calculate related bi-$K$-invariant function $\psi_{i}$ as matrix elements of $K$-fixed vector in a reducible $G$ representation. By calculating~$R(\psi_{i})$, we can relate them to the zonal spherical function as defined in \eqref{zonalsphericalfunction}.
\begin{Corollary}
Let
\[v_{i}'=\sum_{\substack{|P|=i\\P\subset \mathrm{N}}} (-1)^{b(P)}e_{P}\otimes e_{\mathrm{N}\backslash P}\wedge e_{\mathrm{M}}\in
 V_{\omega_{i}}^{G}\otimes V_{\omega_{m+n-i}}^{G},\]
where $i=0,1,\dots,n$, then $v'_{i}$ is a $K$-fixed vector in $V_{\omega_{i}}^{G}\otimes V_{\omega_{m+n-i}}^{G}$.
\end{Corollary}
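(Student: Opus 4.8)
The plan is to verify that $v_i'$ is fixed by the standard action of $K=\mathrm{S}(\mathrm{U}(n)\times\mathrm{U}(m))$ on $V_{\omega_i}^G\otimes V_{\omega_{m+n-i}}^G\cong\bigwedge^i\mathbb{C}^{n+m}\otimes\bigwedge^{m+n-i}\mathbb{C}^{n+m}$. This is exactly the special case $s=0$, $u=i$, $k=i$ of Theorem~\ref{kintertwiner}: taking $H=\varnothing$ (so $e_H=1$) and $k=u=i$ forces $|P|=i$, $P\subset\mathrm{N}$ and $|Q|=u-k=0$, $Q=\varnothing$, so that
\[
v^{\varnothing,i}=\sum_{\substack{P\subset\mathrm{N}\\|P|=i}}(-1)^{b(P)}e_P\otimes e_{\mathrm{N}\backslash P}\wedge e_{\mathrm{M}}=v_i'.
\]
Theorem~\ref{kintertwiner} asserts that $e_H\mapsto v^{H,k}$ is a $K$-intertwiner $\bigwedge^s\mathbb{C}^n\to V_{\omega_{s+u}}^G\otimes V_{\omega_{m+n-u}}^G$; for $s=0$ the source $\bigwedge^0\mathbb{C}^n=\mathbb{C}$ is the trivial $K$-module, so the image of the generator, namely $v_i'$, must be a $K$-fixed vector. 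That is the whole argument.

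Concretely, I would write: by Theorem~\ref{kintertwiner} with $s=0$, $u=k=i$, the map $h^i\colon \bigwedge^0\mathbb{C}^n\to V_{\omega_i}^G\otimes V_{\omega_{m+n-i}}^G$ sending $1\mapsto v^{\varnothing,i}=v_i'$ is a $K$-intertwiner. Since $\bigwedge^0\mathbb{C}^n\cong\mathbb{C}$ carries the trivial $K$-representation, we have $\pi(k)v_i'=\pi(k)h^i(1)=h^i(\pi(k)1)=h^i(1)=v_i'$ for all $k\in K$, i.e., $v_i'$ is $K$-fixed. Alternatively, and independently of Theorem~\ref{kintertwiner}, one can give a direct check: writing $k=\mathrm{diag}(k_1,k_2)$ with $k_1\in\mathrm{U}(n)$, $k_2\in\mathrm{U}(m)$, apply $\pi(k_1)$ on the $\mathrm{N}$-slots and $\pi(k_2)$ on the $\mathrm{M}$-slots, expand via the Laplace expansion of Remark~\ref{laplace}, and use \eqref{laplaciandet} to collapse the sum, picking up the factor $\det\pi(k_1)\det\pi(k_2)=1$ (because $k\in\mathrm{S}(\mathrm{U}(n)\times\mathrm{U}(m))$ has determinant one) so that the vector returns to itself; this is precisely the computation already carried out in the proof of Theorem~\ref{kintertwiner}.

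There is essentially no obstacle here: the statement is a direct specialization of a theorem proved two results earlier, and the only thing to be careful about is the bookkeeping of which index set ($\mathrm{N}$ versus $\mathrm{M}$) the wedge factors live in and that $\bigwedge^{m+n}\mathbb{C}^{m+n}$ is the (one-dimensional) trivial $G$-representation, which is what makes the $\mathrm{M}$-part $e_{\mathrm{N}\backslash P}\wedge e_{\mathrm{M}}$ sit inside $\bigwedge^{m+n-i}\mathbb{C}^{m+n}$ and the whole expression land in $V_{\omega_i}^G\otimes V_{\omega_{m+n-i}}^G$. I would therefore present the proof in one or two lines as "This is the case $H=\varnothing$, $s=0$, $u=k=i$ of Theorem~\ref{kintertwiner}," possibly followed by the short explicit Laplace-expansion verification for the reader's convenience.
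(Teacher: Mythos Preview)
Your proof is correct and is exactly the paper's own argument: the paper's proof reads ``It is a special case of Theorem~\ref{kintertwiner},'' and you have identified the precise specialization ($s=0$, $H=\varnothing$, $u=k=i$). The additional explicit Laplace-expansion check you offer is redundant but harmless.
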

\begin{proof}
It is a special case of Theorem~\ref{kintertwiner}.
\end{proof}

By Theorem \ref{gdec}, we have
\[V_{\omega_{i}}^{G}\otimes V_{\omega_{m+n-i}}^{G}\cong\bigoplus_{j=0}^{i} V_{\omega_{j}+\omega_{m+n-j}}^{G},
\qquad j=0,1,\dots,i,\]
then $v'_{i}$ is a linear combination of $K$-fixed vector in $V_{\omega_{j}+\omega_{m+n-j}}^{G}$ with $j=0,1,\dots,i$. We define
\[\psi_{i}(a_{\mathbf{t}})=\langle\pi(a_{\mathbf{t}})v_{i}',v_{i}'\rangle=\sum_{\substack{|P|=i\\P\subset \mathrm{N}}} \cos^{2}t_{P}\]
and $\psi_{i}$ is $i$-th elementary symmetric polynomial in $\cos^{2}t_{k}$, $k=1,2,\dots,n$. We use the convention $\phi_{0}=\psi_{0}=1$, and $d_{0}=0$. In this case, $\psi_{i}$'s are linearly independent since the total degree of the $\psi_i$'s as polynomials in $(\cos t_{1},\cos t_2,\dots,\cos t_n)$ are all different.
\begin{Corollary}\label{cor:zonalsf}
We have
\[(R\psi_{i})(\cos\mathbf{t})=d_{i}\psi_{i}(\cos\mathbf{t})-2(n-i+1)^{2}\psi_{i-1}(\cos\mathbf{t}),\]
where we let $\psi_{-1}(\cos\mathbf{t})=0$.
\end{Corollary}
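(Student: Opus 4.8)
The plan is to exploit the fact that $\psi_i$ is (up to a known scalar) a matrix element of the $K$-fixed vector $v_i'\in V_{\omega_i}^G\otimes V_{\omega_{m+n-i}}^G$, so that the radial Casimir $R$ acts on $\psi_i$ through the Casimir eigenvalues of the irreducible constituents of $V_{\omega_i}^G\otimes V_{\omega_{m+n-i}}^G$. By Theorem~\ref{gdec} this tensor product decomposes as $\bigoplus_{j=0}^i V_{\omega_j+\omega_{m+n-j}}^G$, and the $K$-fixed vector $v_i'$ projects nontrivially onto each summand (by Corollary~\ref{kmodulenumber}/Lemma~\ref{lem:linearlycombinationofkmodule} with $s=0$, each $V_{\omega_j+\omega_{m+n-j}}^G$ contains the trivial $K$-representation with multiplicity one). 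Writing $v_i'=\sum_{j=0}^i \gamma_{i,j}\, v_{i,j}$ with $v_{i,j}$ the (unit-normalized) $K$-fixed vector in $V_{\omega_j+\omega_{m+n-j}}^G$, one gets $\psi_i(a_\mathbf{t})=\langle \pi(a_\mathbf{t})v_i',v_i'\rangle = \sum_{j=0}^i |\gamma_{i,j}|^2\,\langle v_{i,j},v_{i,j}\rangle\,\phi_j(a_\mathbf{t})$, so $\psi_i = \sum_{j=0}^i c_{i,j}\phi_j$ is an explicit triangular combination of zonal spherical functions. Since each $\phi_j$ is an $R$-eigenfunction with eigenvalue $d_j = 2j(m+n-j+1)$, we obtain $R\psi_i = \sum_{j=0}^i c_{i,j} d_j \phi_j$, and the whole problem reduces to identifying the scalars $c_{i,j}$.

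The key step is therefore a combinatorial computation of the $c_{i,j}$. First I would use Corollary~\ref{cor:coe} (with $s=0$, $u=i$) to compute $\langle v_i',v_i'\rangle = \binom{n}{i}\binom{m}{0}=\binom{n}{i}$, fixing $c_{i,i}$. More importantly, I would compute the inner product $\langle v_i', v_{i-1}'\wedge(\text{something})\rangle$ — or more directly, exploit the $G$-intertwiner $\rho^0$ from Remark~\ref{coefficients}: there is a $G$-map $V_{\omega_i}^G\otimes V_{\omega_{m+n-i}}^G \hookleftarrow V_{\omega_{i-1}}^G\otimes V_{\omega_{m+n-i+1}}^G$ (adjoint of a suitable contraction), and $v_i'$ is (up to scalar) the image of $v_{i-1}'$ under going one step, plus a genuinely new piece lying in $V_{\omega_i+\omega_{m+n-i}}^G$. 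Tracking the explicit formula $\rho^0(v_0^H)=v^{H,1}+v^{H,0}$ and its iterates lets one pin down the recursion among the $\psi_i$ and hence the $c_{i,j}$. Rather than computing all $c_{i,j}$, I would only need to establish the \emph{second-order} structure: that $R\psi_i - d_i\psi_i$ is a scalar multiple of $\psi_{i-1}$, and then match one coefficient.

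A cleaner route, which I would prefer, is to sidestep the $c_{i,j}$ entirely and \emph{deduce} the corollary from Lemma~\ref{lem:rqi}. Indeed, $\psi_i$ is precisely the scalar-valued analogue of $Q^\mu_{\nu_i}$ in the case $\mu=0$, $s=0$, $b=0$: then $V_\mu^K$ is $1$-dimensional, $\nu_i=\omega_i+\omega_{m+n-i}$, $c_{\nu_i}=d_i$, and the entry of $Q^0_{\nu_i}(a_\mathbf{t})$ computed in Lemma~\ref{Qomegas+bomeganentry} becomes $\sum_{|I|=i,\,I\subset\mathrm{N}}\cos^2 t_I = \psi_i(\cos\mathbf{t})$ (here $H=\emptyset$, $t_H$ empty, $b=0$). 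Thus $Q^0_{\nu_i}=\psi_i$ as functions on $A$, and specializing Lemma~\ref{lem:rqi} at $s=0$, $b=0$ gives exactly
\[
(R\psi_i)(\cos\mathbf{t}) = d_i\,\psi_i(\cos\mathbf{t}) - 2(n-i+1)(n-i+1)\,\psi_{i-1}(\cos\mathbf{t}),
\]
which is the claimed identity with $\psi_{-1}=0$ corresponding to $Q^0_{\nu_{-1}}\equiv0$.

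The main obstacle is verifying that this specialization is legitimate — i.e., that the formula of Lemma~\ref{lem:rqi}, whose proof is a \textsc{Maxima} computation with general $s$ and $b$, remains valid at the boundary values $s=0$ and $b=0$, and that in that degenerate regime $Q^0_{\nu_i}$ genuinely coincides with $\psi_i$ rather than with some other normalization. This is essentially bookkeeping: one checks that nothing in the definition of $j_W$, the approximate function \eqref{approfuncQi}, or the radial operator $R$ breaks when $s=0$ (so $\bigwedge^0\mathbb{C}^n=\mathbb{C}$ and $H=\emptyset$) and $b=0$ (so the $(V_{\omega_n}^G)^{\otimes b}$ factor disappears). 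Once that is confirmed, the corollary follows immediately and no new calculation is required; the alternative direct combinatorial derivation of the $c_{i,j}$ is available as a fallback if the specialization argument is deemed insufficiently rigorous.
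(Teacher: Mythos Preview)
Your proposal is correct, and your ``cleaner route'' is exactly the paper's proof: the paper simply states that the corollary follows from Lemma~\ref{lem:rqi} by setting $s=b=0$. Your additional discussion of the alternative approach via the coefficients $c_{i,j}$ and your care about the legitimacy of the specialization are more than the paper provides, but the core argument is identical.
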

\begin{proof}
We can prove it by Lemma \ref{lem:rqi}, where we let $s=b=0$.
\end{proof}
\begin{Proposition}\label{prop:leadingtermofphii}
In this case, $\phi_{i}$ can be written as linear combination of $\psi_{j}$'s with $j=0,1,\dots,i$, and the coefficient of $\psi_{i}$ is non-zero. Explicitly, we have
\[\phi_i(a_\mathbf{t})=l\sum_{j=0}^{i} k_j\psi_j(\cos\mathbf{t}),\]
where
\[k_j=\frac{(-1)^{j}(i+1-j)_{j}(m+n+2-i-j)_{j}}{(n+1-j)_j(n+1-j)_{j}}\]
and
\[l=(-1)^i\frac{(-n)_i}{(-m)_i}.\]
\end{Proposition}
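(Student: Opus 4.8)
The plan is to combine the qualitative triangular relation between the $\psi_i$'s and the $\phi_i$'s with the two structural facts already available: each $\phi_i$ is an eigenfunction of the radial Casimir operator $R$, and $R$ acts on the $\psi_i$'s by the three-term formula of Corollary~\ref{cor:zonalsf}. \emph{Step 1: $\phi_i\in\operatorname{span}\{\psi_0,\dots,\psi_i\}$.} Decompose the $K$-fixed vector $v_i'$ along the $G$-module decomposition $V_{\omega_i}^G\otimes V_{\omega_{m+n-i}}^G\cong\bigoplus_{j=0}^i V_{\omega_j+\omega_{m+n-j}}^G$ of Theorem~\ref{gdec}; since $A\subset G$ preserves each summand, the summands are mutually non-isomorphic irreducibles hence orthogonal, and each summand carries a $1$-dimensional space of $K$-fixed vectors whose diagonal matrix coefficient on $A$ is a non-negative multiple of $\phi_j$, we get $\psi_i=\langle\pi(a_\mathbf{t})v_i',v_i'\rangle=\sum_{j=0}^i\beta_j^{(i)}\phi_j$ with $\beta_j^{(i)}\ge 0$. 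As the $\psi_i$'s are linearly independent, $\{\psi_0,\dots,\psi_i\}$ are $i+1$ independent elements of $\operatorname{span}\{\phi_0,\dots,\phi_i\}$, so $\operatorname{span}\{\psi_0,\dots,\psi_i\}=\operatorname{span}\{\phi_0,\dots,\phi_i\}$; in particular $\phi_i=\sum_{j=0}^i c_j\psi_j$ for some $c_j\in\mathbb{C}$.

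\emph{Step 2: the recursion.} Recall from Section~\ref{sec:radialpartofthecasimiroperator} that $R\phi_i=d_i\phi_i$ with $d_i=2i(m+n-i+1)$. Apply $R$ to $\phi_i=\sum_{j=0}^i c_j\psi_j$, substitute Corollary~\ref{cor:zonalsf}, and compare coefficients of each $\psi_j$ using linear independence of the $\psi_j$'s. The $\psi_i$-coefficient is vacuous, and for $0\le j\le i-1$ one obtains $(d_i-d_j)c_j=-2(n-j)^2c_{j+1}$. Since $d_i-d_j=2(i-j)(m+n+1-i-j)$, this is $c_j=-\dfrac{(n-j)^2}{(i-j)(m+n+1-i-j)}\,c_{j+1}$, hence $c_j=c_i\displaystyle\prod_{k=j}^{i-1}\dfrac{-(n-k)^2}{(i-k)(m+n+1-i-k)}$. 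Rewriting the three products in Pochhammer symbols identifies this product with $k_j/k_i$, so $\phi_i=l\sum_{j=0}^i k_j\psi_j$ with $l:=c_i/k_i$; and because $i\le n\le m$ every factor of $k_i$ is a positive integer, so $k_i\neq 0$ and the coefficient $lk_i$ of $\psi_i$ is nonzero.

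\emph{Step 3: the normalization constant.} By \eqref{zonalsphericalfunction} the element $a_\mathbf{0}$ is the identity of $G$, so $\phi_i(a_\mathbf{0})=1$; since $\psi_j$ evaluated at $\cos\mathbf{t}=\mathbf{1}$ is the $j$-th elementary symmetric function of $n$ ones, namely $\binom{n}{j}$, we get $l^{-1}=\sum_{j=0}^i k_j\binom{n}{j}$. The summand $k_j\binom{n}{j}$ equals $1$ at $j=0$ and the ratio of consecutive summands is $\dfrac{(j-i)(j+i-m-n-1)}{(j-n)(j+1)}$, so the sum is $\rFs{2}{1}{-i,\, i-m-n-1}{-n}{1}$, which the Chu--Vandermonde summation evaluates to $\dfrac{(m+1-i)_i}{(-n)_i}=(-1)^i\dfrac{(-m)_i}{(-n)_i}$. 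Therefore $l=(-1)^i\dfrac{(-n)_i}{(-m)_i}$, as claimed.

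Nothing here is deep, but the two calculational steps carry the weight: recognizing the solved recursion $c_i\prod_{k=j}^{i-1}(\cdots)$ as $c_i k_j/k_i$ is a careful Pochhammer-symbol rewriting, and the normalization requires spotting that $\sum_j k_j\binom{n}{j}$ is a terminating ${}_2F_1$ at $1$ so that Chu--Vandermonde applies; the minor worry that the top $G$-isotypic component of $v_i'$ could vanish is resolved for free once $k_i\neq 0$ has been checked.
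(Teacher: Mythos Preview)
Your proof is correct and follows essentially the same route as the paper's: derive a two-term recursion for the coefficients from Corollary~\ref{cor:zonalsf} together with $R\phi_i=d_i\phi_i$, solve it, and then pin down the overall constant by evaluating at $\mathbf{t}=\mathbf{0}$ and applying Chu--Vandermonde. The only differences are cosmetic: you anchor the recursion at $c_i$ and work downward (writing $l=c_i/k_i$), while the paper anchors at $k_0=1$ and works upward; and your Step~1 makes explicit the equality $\operatorname{span}\{\psi_0,\dots,\psi_i\}=\operatorname{span}\{\phi_0,\dots,\phi_i\}$, which the paper leaves implicit in the discussion preceding the proposition. One small organizational point: your assertion in Step~2 that the leading coefficient $lk_i=c_i$ is nonzero is not yet justified there (knowing $k_i\neq0$ only shows $l$ is well-defined); it only becomes clear after Step~3 produces the explicit nonzero value of $l$.
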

\begin{proof}
It is true for $i=0$ and $\phi_0(a_\mathbf{t})=\psi_0(\cos\mathbf{t})=1$. By Corollary \ref{cor:zonalsf} and $(R\phi_i)(a_\mathbf{t})=d_i\phi_i(a_\mathbf{t})$, we have
\[-2(n-j)^2k_{j+1}+d_j k_j=d_i k_j,\]
then let $k_0=1$ and the expression of $k_j$'s are clear. Also let $\mathbf{t}=(0,0,\dots,0)$, and we have
\[\phi_i(a_\mathbf{t})=1=l\sum_{j=0}^{i}\frac{(-1)^{j}(i+1-j)_{j}(m+n+2-i-j)_{j}}{j! (n+1-j)_{j}}.\]
It leads to
\[l=\left(\rFs{2}{1}{-i,i-m-n-1}{-n}{1}\right)^{-1}=(-1)^i\frac{(-n)_i}{(-m)_i}.\]
by Chu--Vandermonde summation. Then this proposition is proved by calculation.
\end{proof}

\section[Matrix-valued spherical functions with mu=a omega\_1+b omega\_n]{Matrix-valued spherical functions with $\boldsymbol{\mu=a\omega_{1}+b\omega_{n}}$}\label{exa:aomega1+bomegan}
The goal of this section is to give the approximation of the matrix spherical function corresponding to $\mu=a\omega_1+b\omega_n$ with $a,b\in\mathbb{N}$. In this case, $B(\mu)$ is explicit in Section~\ref{caseofaomega1bomegan}. We have the tensor product decomposition, recall $\sum_{i=1}^{n}a_i=a$, $a_i\in\mathbb{N}$,
\[W_{\nu}^{G}=\bigotimes_{i=1}^n \big(V_{\omega_{i}}^{G}\otimes V_{\omega_{m+n+1-i}}^{G}\big)^{\otimes a_{i}}\otimes \big(V_{\omega_{n}}^{G}\big)^{\otimes b}\cong\bigoplus_{\lambda\preccurlyeq\nu} m_{\lambda}V_{\lambda}^{G},\qquad m_\nu=1.\]
We recall the notation in \eqref{weightvectoreta} and define
\[w_{i}=\sum_{\substack{|P|=i-1\\P\subset \mathrm{N}}} (-1)^{b(P)}e_{1}\wedge e_{P}\otimes e_{\mathrm{N}\backslash P}\wedge e_\mathrm{M}\in V_{\omega_{i}}^{G}\otimes V_{\omega_{m+n+1-i}}^{G}.\]
So
\begin{Lemma}\label{lem:aomega1bomeganhighestweightvector}
\[v_{\mu}=\bigotimes_{i=1}^n  w_{i}^{\otimes a_{i}}\otimes (e_\mathrm{N})^{\otimes b}\in W_{\nu}^{G}\]
is a $K$-highest weight vector of weight $\mu=a\omega_{1}+b\omega_{n}$.
\end{Lemma}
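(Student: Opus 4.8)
The plan is to verify directly that $v_\mu$ is annihilated by the nilpotent part of $\mathfrak{k}$ and has the claimed weight, using that tensor products of highest weight vectors are highest weight vectors of the sum weight. First I would recall from Remark~\ref{coefficients} and Theorem~\ref{kintertwiner} (with $s=0$, $u=k=i-1$ in the notation there, applied to the factor $V_{\omega_i}^G\otimes V_{\omega_{m+n+1-i}}^G$) that $w_i$ is, up to normalization, the image under a $K$-intertwiner of the highest weight vector $e_1\wedge e_2\wedge\cdots\wedge e_i \in \bigwedge^i\mathbb{C}^n = V_{\omega_i}^K$. More precisely, $w_i = v^{H,i-1}$ with $H=\{1\}$ and $u=i-1$, which lies in the copy of $V_{\omega_i}^K$ inside $V_{\omega_i}^G\otimes V_{\omega_{m+n+1-i}}^G$; alternatively one checks directly that $w_i$ is the image of $e_1$ under the $K$-intertwiner $h^{i-1}$ of Theorem~\ref{kintertwiner} with $s=1$. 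Since $\bigwedge^i\mathbb{C}^n$ as a $K=\mathrm{S}(\mathrm{U}(n)\times\mathrm{U}(m))$-module (via the first block) has highest weight $\omega_i$ with highest weight vector $e_1\wedge\cdots\wedge e_i$, and a $K$-intertwiner sends highest weight vectors to highest weight vectors (or zero, but here it is nonzero), $w_i$ is a $K$-highest weight vector of weight $\omega_i$ in $V_{\omega_i}^G\otimes V_{\omega_{m+n+1-i}}^G$.

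Next I would treat the factor $(e_{\mathrm{N}})^{\otimes b} = (e_1\wedge e_2\wedge\cdots\wedge e_n)^{\otimes b}$: the vector $e_{\mathrm N}=e_1\wedge\cdots\wedge e_n$ spans the one-dimensional $K$-module $\bigwedge^n\mathbb{C}^n \cong V_{\omega_n}^K$ (the determinant character of the ${\rm U}(n)$-block), embedded in $V_{\omega_n}^G\cong\bigwedge^n\mathbb{C}^{n+m}$ via the standard inclusion $\mathbb{C}^n\hookrightarrow\mathbb{C}^{n+m}$, and this embedding is a $K$-intertwiner; hence $e_{\mathrm N}$ is a $K$-highest weight vector of weight $\omega_n$ in $V_{\omega_n}^G$. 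Now I invoke the general fact that if $v\in V$ and $v'\in V'$ are highest weight vectors (with respect to the same Borel of $K$) of weights $\sigma,\sigma'$ in $K$-modules $V,V'$, then $v\otimes v'$ is a highest weight vector of weight $\sigma+\sigma'$ in $V\otimes V'$: this follows because each positive root vector acts on $v\otimes v'$ by the Leibniz rule and kills both tensor legs, while the Cartan acts by the sum of the weights. Applying this to the $\sum a_i + b$ tensor factors $w_i^{\otimes a_i}$ and $e_{\mathrm N}^{\otimes b}$, the vector $v_\mu=\bigotimes_{i=1}^n w_i^{\otimes a_i}\otimes (e_{\mathrm N})^{\otimes b}$ is a $K$-highest weight vector of weight $\sum_{i=1}^n a_i\omega_i + b\omega_n$.

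It remains to observe that $\sum_{i=1}^n a_i\omega_i + b\omega_n$ equals $\mu = a\omega_1+b\omega_n$ exactly under the hypothesis $\sum_{i=1}^n a_i=a$ together with the choice of exponents $a_i$ dictating the factor $V_{\omega_i}^G\otimes V_{\omega_{m+n+1-i}}^G$; but here the relevant weight computation is subtler, since the $a_i$'s index which spherical generators $\omega_i+\omega_{m+n+1-i}$ enter $\nu$, while each $w_i$ contributes only $\omega_i$ to the $K$-weight (the complementary factor $V_{\omega_{m+n+1-i}}^G$ contributes the $K$-trivial part of $w_i$'s weight, consistently with $w_i$ generating $V_{\omega_i}^K$). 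Thus the $K$-weight of $v_\mu$ is $\big(\sum_i a_i\big)\omega_1$-type only when $w_i$ has $K$-weight $\omega_1$, which is the case $i=1$; for $i>1$ one must double-check that $w_i$ as written really has $K$-weight $\omega_i$ and re-read the definition of $B(\mu)$ in Proposition~\ref{prop:P+Gmuofrankn}, where $\nu=\sum_i a_i(\omega_i+\omega_{m+n+1-i})+b\omega_n$. I expect the main obstacle to be precisely this bookkeeping: reconciling the $G$-weight $\nu$ of the generating vector inside $W_\nu^G$ with its $K$-weight $\mu$, i.e., verifying that the restriction of the $G$-highest weight $\sum a_i(\omega_i+\omega_{m+n+1-i})+b\omega_n$ to the relevant torus, carried by $v_\mu$, is indeed $a\omega_1+b\omega_n$ as a $K$-weight — this uses that $(\omega_i+\omega_{m+n+1-i})|_{T_M}$ and the interplay with $\omega_1$ collapse correctly, a computation analogous to the one in Lemma~\ref{deitgkmmultifreeaomega1}. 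Once the weight identity is pinned down, nonvanishing of $v_\mu$ is immediate since it is a nonzero tensor of nonzero vectors, and the proof concludes.
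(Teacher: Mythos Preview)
Your overall strategy --- show that each tensor factor is a $K$-highest weight vector and then invoke the Leibniz rule --- is sound and is essentially a repackaging of the paper's one-line proof, which simply records that $E_{i,i+1}v_\mu=0$ for $i\in\{1,\dots,n-1,n+1,\dots,n+m-1\}$. The tensor-product argument \emph{is} that computation, organized factor by factor.

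Where your write-up goes wrong is the weight of $w_i$. You correctly identify $w_i=h^{i-1}(e_1)$ with $s=1$, $H=\{1\}$, $u=k=i-1$ in Theorem~\ref{kintertwiner}, but then switch to claiming that $w_i$ generates a copy of $V_{\omega_i}^K$ and has $K$-weight $\omega_i$. That is false: with $s=1$ the intertwiner is $h^{i-1}\colon V_{\omega_1}^K\to V_{\omega_i}^G\otimes V_{\omega_{m+n+1-i}}^G$, so $w_i$ lies in a copy of $V_{\omega_1}^K$ and has $K$-weight $\omega_1$, for \emph{every} $i$. One can also see this directly: each summand $e_1\wedge e_P\otimes e_{\mathrm N\setminus P}\wedge e_{\mathrm M}$ has $T$-weight $\epsilon_1+\sum_{p\in P}\epsilon_p+\sum_{j\notin P}\epsilon_j=\epsilon_1+\sum_{j=1}^{m+n}\epsilon_j=\epsilon_1=\omega_1$. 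With this correction the bookkeeping you were worried about evaporates: the $K$-weight of $v_\mu$ is $\bigl(\sum_{i=1}^n a_i\bigr)\omega_1+b\omega_n=a\omega_1+b\omega_n=\mu$, with no need to restrict to $T_M$ or to invoke any collapse of $\omega_i+\omega_{m+n+1-i}$. The rest of your argument (nonvanishing, annihilation by positive root vectors via Leibniz) is fine.
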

\begin{proof}
It can be proved by using the fact that $E_{i,i+1}v_\mu=0$ for $i=1,2,\dots,n-1,n+1,\allowbreak\dots,n+m$.
\end{proof}
\begin{Remark}
We can calculate other weight vectors in $V_\mu^K$ by the Chevalley basis acting on~$v_\mu$. Then we have a $K$-intertwiner $j_W$ from $V_{\mu}^{K}$ to the $G$-module $W_{\nu}^{G}$. We have $Q_{\nu}^{\mu} \colon A \to \operatorname{End}\big(V^K_\mu\big),
\quad a\mapsto j_W^\ast \circ \pi_{W_{\nu}^{G}}(a) \circ j_W$ which is the corresponding matrix spherical function restricted to $A$. Since $V_{\mu}^{K}|_{M}$ splits multiplicity free and $Q_{\nu}^{\mu}(a)\in\operatorname{End}_{M}\big(V_{\mu}^{K}\big)$, $Q_{\nu}^{\mu}(a)$ is a diagonal matrix if we choose the $M$-weight vectors as the basis of $V_{\mu}^{K}$. We calculate the entry corresponding to $v_{\mu}$ and
\[\frac{\langle\pi(a_{\mathbf{t}})v_{\mu},v_{\mu}\rangle}{\langle v_{\mu},v_{\mu}\rangle}
=\frac{1}{\lVert v_{\mu} \rVert ^2}\cos^{a} t_{1}
\times\big(\psi_{1}^{(1)}(\cos\mathbf{t})\big)^{a_{2}}\big(\psi_{2}^{(1)}(\cos\mathbf{t})\big)^{a_{3}}\cdots\big(\psi_{n-1}^{(1)}(\cos\mathbf{t})\big)^{a_{n}}\cos^{b} t_\mathrm{N},\]
where $\psi_i^{(1)}$ is $i$-th symmetric polynomial in $\cos^2 t_2,\cos^2 t_3,\dots,\cos^2 t_n$ as defined in Appendix~\ref{app:casimirop}. Then $Q_{\nu}^{\mu}(a_{\mathbf{t}})$'s are linearly independent for $\nu\in B(\mu)$. Other entries can be calculated analogously.
\end{Remark}
\begin{Remark}\label{nueqmuaomega1bomegan}
Similar to Remark \ref{nueqmu}, we have $\Phi_{a\omega_1+b\omega_n}^{\mu}(a_{\mathbf{t}})=Q^\mu_{a\omega_1+b\omega_n}(a_{\mathbf{t}})$.
\end{Remark}
\begin{Remark}\label{rem:Bmu}
For $\nu=\sum_{i=1}^{n}a_i(\omega_{i}+\omega_{m+n+1-i})\in B(\mu)$, assume there exists $\lambda'=\nu'+\lambda_{\rm sph}=\sum_{i=1}^{n}a'_i(\omega_{i}+\omega_{m+n+1-i})+\sum_{i=1}^{n}d_i(\omega_{i}+\omega_{m+n-i})\in P_G^+(\mu)$ such that $\nu\succcurlyeq\lambda'$. We want to show $\lambda_{\rm sph}=0$. Then
plugging the expression of $\nu$, $\lambda'$, we get
\[\nu-\lambda'=\sum_{i=1}^{n}(a_{i}-a_{i}')(\omega_{i}+\omega_{m+n+1-i})-\sum_{i=1}^n d_i(\omega_{i}+\omega_{m+n-i}),\]
where $\omega_{0}=\omega_{m+n}=0$ by convention.

By summation by parts, we get
\begin{gather*}
\nu-\nu'=\sum_{i=1}^{n-1}
\biggl(\sum_{j=1}^{i} (a_{j}'-a_{j})\biggr)(\omega_{i+1}+\omega_{m+n-i}-\omega_{i}-\omega_{m+n+1-i})
+\sum_{j=1}^{n}(a_{j}-a_{j}')(\omega_{n}+\omega_{m+1}).\!\!\!
\end{gather*}

First, observe that
\[\sum_{j=1}^{n}(a_{j}-a_{j}')=a-a=0\]
since $\nu,\nu'\in B(\mu)$.

Next note that
\[\omega_{i+1}+\omega_{m+n-i}-\omega_{i}-\omega_{m+n+1-i}=\epsilon_{i+1}-\epsilon_{m+n+1-i}=\sum_{s=i+1}^{m+n-i} \alpha_{s},\]
by Remark~\ref{weightrootrelation}. Then the coefficient of $\alpha_1$ in $\nu-\lambda'$ is $-\sum_{i=1}^{n}d_i$. Since $\nu-\lambda'\in Q^+$ and $d_i\in\mathbb{N}$, we have $d_i=0$ for $i=1,2,\dots,n$. It follows that $\lambda'=\nu'\in B(\mu)$.
\end{Remark}

\begin{Lemma}\label{lem:phiaomega1+bomegan}
We have
\[\Phi_{\nu}^{\mu}(a_{\mathbf{t}})=\sum_{\lambda\preccurlyeq\nu,\,\lambda\in B(\mu)} d_{\lambda}Q_{\lambda}^{\mu}(a_{\mathbf{t}}),\qquad d_\lambda\in\mathbb{C}, \]
and $d_{\nu}\neq0$.
\end{Lemma}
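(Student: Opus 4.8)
The plan is to expand $\Phi_\nu^\mu(a_{\mathbf t})$ in the $E^0$-basis given by the $Q_\lambda^\mu$'s and then locate the leading term. Concretely, recall from Lemma \ref{lem:sphericalpolynomials} (applied with $\mu=a\omega_1+b\omega_n$, for which $B(\mu)$ is explicit by Section \ref{caseofaomega1bomegan}) that $\Phi_\nu^\mu|_A = \sum_{\lambda'\preccurlyeq\nu} q_{\lambda'} F_{\lambda'}$ with $q_\nu\neq 0$, where $F_{\lambda'}=\phi_1^{d_1}\cdots\phi_n^{d_n}\Phi_{\nu'}^\mu|_A$ for $\lambda'=\nu'+\sum_j d_j\lambda_j$, $\nu'\in B(\mu)$. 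First I would show that each $F_{\lambda'}$ occurring here is itself a linear combination of $Q_\mu^\lambda$'s indexed by elements of $B(\mu)$. For this I invoke the tensor product decomposition defining $W_\nu^G$ together with Theorem \ref{gdec}: the module $W_{\nu'}^G\otimes\bigotimes_{i}(V_{\omega_i+\omega_{m+n-i}}^G)^{\otimes d_i}$ contains $V_{\lambda'}^G$ with multiplicity one and only modules $V_{\lambda''}^G$ with $\lambda''\preccurlyeq\lambda'$, and each such $\lambda''$ in which $V_\mu^K$ embeds lies in $P_G^+(\mu)$. Combined with Remark \ref{nueqmuaomega1bomegan} (which says $Q_\nu^\mu=\Phi_\nu^\mu$ on the nose when $\nu\in B(\mu)$) and the triangular relation of Lemma \ref{lem:sphericalpolynomials} run in the other direction, this expresses $F_{\lambda'}$, and hence $\Phi_\nu^\mu(a_{\mathbf t})$, as $\sum_{\lambda\in P_G^+(\mu)} c_\lambda Q_\lambda^\mu(a_{\mathbf t})$.

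Next I would cut this sum down to $B(\mu)$. The key input is Remark \ref{rem:Bmu}: if $\nu\in B(\mu)$ and $\lambda'\in P_G^+(\mu)$ satisfies $\lambda'\preccurlyeq\nu$, then necessarily $\lambda'\in B(\mu)$ (the spherical part must vanish, because its contribution to the coefficient of $\alpha_1$ in $\nu-\lambda'$ would be strictly negative). Since the spherical functions $\Phi_{\lambda'}^\mu$ with $\lambda'\preccurlyeq\nu$ all have, by Lemma \ref{lem:sphericalpolynomials}, expansions supported on $F_{\lambda''}$ with $\lambda''\preccurlyeq\lambda'\preccurlyeq\nu$, only $\lambda'',\lambda'\in B(\mu)$ can appear. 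Thus the whole expansion collapses to $\Phi_\nu^\mu(a_{\mathbf t})=\sum_{\lambda\preccurlyeq\nu,\,\lambda\in B(\mu)} d_\lambda Q_\lambda^\mu(a_{\mathbf t})$, as claimed. Linear independence of the $Q_\lambda^\mu$ for $\lambda\in B(\mu)$, noted in the Remark following Lemma \ref{lem:aomega1bomeganhighestweightvector} via distinct total degrees, makes the expansion unique.

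Finally, to see $d_\nu\neq 0$ I would track the leading term. On one hand $\Phi_\nu^\mu|_A = q_\nu F_\nu + (\text{lower})$ with $q_\nu\neq0$; on the other hand $F_\nu = \Phi_\nu^\mu|_A = Q_\nu^\mu$ by definition and Remark \ref{nueqmuaomega1bomegan}, while every $F_{\lambda'}$ with $\lambda'\precneqq\nu$ expands into $Q_\lambda^\mu$'s with $\lambda\preccurlyeq\lambda'\precneqq\nu$. Comparing the coefficient of $Q_\nu^\mu$ on both sides, or equivalently comparing top total degrees of the defining matrix entries (the entry of $Q_\nu^\mu(a_{\mathbf t})$ corresponding to $v_\mu$ has strictly larger degree than that of any $Q_\lambda^\mu$ with $\lambda\precneqq\nu$), forces $d_\nu = q_\nu\neq0$. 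The main obstacle is the bookkeeping in the first paragraph: carefully justifying that passing from the $F_{\lambda'}$-basis to the $Q_\lambda^\mu$-basis stays triangular with respect to $\preccurlyeq$ and that no $\lambda\not\preccurlyeq\nu$ sneaks in. This is where Theorem \ref{gdec}, the multiplicity-freeness from Lemma \ref{deitgkmmultifreeaomega1}, and Remark \ref{rem:Bmu} must be combined cleanly; once that is in place, the collapse to $B(\mu)$ and the non-vanishing of $d_\nu$ are essentially formal.
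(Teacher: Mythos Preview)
Your argument rests on a misreading of Remark~\ref{nueqmuaomega1bomegan}. That remark asserts $Q^\mu_{a\omega_1+b\omega_n}=\Phi^\mu_{a\omega_1+b\omega_n}$ only for the single minimal element $\nu=\mu=a\omega_1+b\omega_n$ of $B(\mu)$, not for every $\nu\in B(\mu)$. Indeed, if $Q_\nu^\mu=\Phi_\nu^\mu$ held for all $\nu\in B(\mu)$ the lemma would be vacuous. This error makes the argument circular: for $\nu\in B(\mu)$ one has $F_\nu=\Phi_\nu^\mu|_A$ by definition (the spherical part is zero), so Lemma~\ref{lem:sphericalpolynomials} reduces to the tautology $\Phi_\nu^\mu|_A=F_\nu$, and ``expressing $F_\nu$ in terms of the $Q_\lambda^\mu$'' is exactly the statement you are trying to prove. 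Your first paragraph therefore never produces the claimed expansion, and the leading-term comparison in the last paragraph (``$F_\nu=\Phi_\nu^\mu|_A=Q_\nu^\mu$'') collapses for the same reason.

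The paper's proof runs the triangularity in the opposite direction, and this is the missing idea. From the tensor decomposition $W_\nu^G\cong\bigoplus_{\lambda\preccurlyeq\nu}m_\lambda V_\lambda^G$ one gets $Q_\nu^\mu=\sum_{\lambda\preccurlyeq\nu}d_\lambda\Phi_\lambda^\mu$; Remark~\ref{rem:Bmu} cuts the sum down to $\lambda\in B(\mu)$. If the top coefficient $d_\nu$ vanished, the induction hypothesis (each $\Phi_\lambda^\mu$ with $\lambda\precneqq\nu$ already a combination of $Q_{\lambda'}^\mu$, $\lambda'\preccurlyeq\lambda$) would write $Q_\nu^\mu$ as a combination of strictly lower $Q_{\lambda'}^\mu$'s, contradicting the linear independence noted after Lemma~\ref{lem:aomega1bomeganhighestweightvector}. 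Hence $d_\nu\neq0$, and inverting this lower-triangular system over $B(\mu)$ gives the desired expression for $\Phi_\nu^\mu$. Your use of Remark~\ref{rem:Bmu} and of the linear independence of the $Q_\lambda^\mu$ are the right ingredients; what is missing is to start from $Q_\nu^\mu$ rather than from $\Phi_\nu^\mu$.
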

\begin{proof}
It is true for $\nu=a\omega_{1}+b\omega_{n}$ since $\Phi_{a\omega_{1}+b\omega_{n}}^{\mu}(a_{\mathbf{t}})=Q_{a\omega_{1}+b\omega_{n}}^{\mu}(a_{\mathbf{t}})$ by Remark~\ref{nueqmuaomega1bomegan}. We assume it is true for all the elements occurring in the subset $\{ \widetilde{\nu}\precneqq\nu\mid\widetilde{\nu}\in B(\mu)\}\subset B(\mu)$ and~$\Phi_{\widetilde{\nu}}^{\mu}(a_{\mathbf{t}})$ can also be written as linear combination of $Q_{\widetilde{\nu}'}^{\mu}(a_\mathbf{t})$'s with $\widetilde{\nu'}\preccurlyeq\widetilde{\nu}$. For $\nu\in B(\mu)$, we have
\[Q_{\nu}^{\mu}(a_{\mathbf{t}})=\sum_{\lambda\preccurlyeq\nu} d_{\lambda}\Phi_{\lambda}^{\mu}(a_{\mathbf{t}})\]
from the tensor product decomposition
 \[W_\nu^G\cong\bigoplus_{\lambda\preccurlyeq\nu} m_{\lambda}V_{\lambda}^{G}.\]
If $d_{\nu}=0$, by Remark~\ref{rem:Bmu} we have
\[Q_{\nu}^{\mu}(a_{\mathbf{t}})=\sum_{\lambda\precneqq\nu,\, \lambda\in B(\mu)} d_{\lambda}\Phi_{\lambda}(a_{\mathbf{t}})
=\sum_{\lambda\precneqq\nu,\, \lambda\in B(\mu)} d'_{\lambda}Q_{\lambda}^{\mu}(a_{\mathfrak{t}})\]
and it contradicts the fact that $Q_{\lambda}^{\mu}$'s are linearly independent. So this lemma follows.
\end{proof}
\begin{Theorem}\label{thm:muaomega1bomegan}
For $\lambda=\nu+\sum_{i=1}^{n}d_i(\omega_i+\omega_{n+m-i})$, where $\nu\in B(\mu)$ and $d_i\in\mathbb{N}$, we define
 \[Q_\lambda^\mu(a_\mathbf{t})=\psi_1^{d_1}\psi_2^{d_2}\cdots\psi_n^{d_n}Q_\nu^\mu(a_{\mathbf{t}}),\]
then we have
\[\Phi_{\lambda}^{\mu}(a_{\mathbf{t}})=\sum_{\lambda'\preccurlyeq\lambda,\, \lambda'\in P_G^+(\mu)} d_{\lambda'}Q_{\lambda'}^{\mu}(a_{\mathbf{t}}),\qquad d_{\lambda'}\in\mathbb{C}, \]
and $d_{\lambda}\neq0$.
\end{Theorem}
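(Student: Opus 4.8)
### Proof proposal

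The plan is to mimic the inductive structure used in Lemma~\ref{lem:phiaomega1+bomegan} and Lemma~\ref{lem:sphericalpolynomials}, but now working on the full spectrum $P_G^+(\mu)=B(\mu)+P_G^+(0)$ rather than just on the bottom $B(\mu)$. Write $\lambda=\nu+\lambda_{\mathrm{sph}}$ with $\nu\in B(\mu)$ and $\lambda_{\mathrm{sph}}=\sum_{i=1}^n d_i\lambda_i\in P_G^+(0)$, and proceed by induction on $|\lambda_{\mathrm{sph}}|=\sum_i d_i$ (with the base case $|\lambda_{\mathrm{sph}}|=0$ being exactly Lemma~\ref{lem:phiaomega1+bomegan}).

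First I would observe that multiplying the identity of Lemma~\ref{lem:phiaomega1+bomegan} for $\Phi_\nu^\mu$ by $\psi_1^{d_1}\cdots\psi_n^{d_n}$ and using Proposition~\ref{prop:leadingtermofphii} (each $\phi_i$ is a linear combination of $\psi_0,\dots,\psi_i$ with nonzero leading coefficient, and conversely each $\psi_i$ is a linear combination of $\phi_0,\dots,\phi_i$) gives that $Q_\lambda^\mu=\psi_1^{d_1}\cdots\psi_n^{d_n}Q_\nu^\mu$ can be written as a linear combination of terms $\phi_1^{e_1}\cdots\phi_n^{e_n}\Phi_{\nu'}^\mu$ with $\nu'\preccurlyeq\nu$ in $B(\mu)$, i.e.\ of the functions $F_{\lambda'}$ from Section~\ref{sec:sphfunctoA}, with $\lambda'=\nu'+\sum e_j\lambda_j\preccurlyeq\lambda$; moreover the coefficient of $F_\lambda$ itself is nonzero (combining $d_\nu\neq0$ from Lemma~\ref{lem:phiaomega1+bomegan} with the nonvanishing leading coefficients in Proposition~\ref{prop:leadingtermofphii}). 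By Lemma~\ref{lem:sphericalpolynomials}, each $F_{\lambda'}$ expands triangularly in the $\Phi_{\lambda''}^\mu|_A$ with $\lambda''\preccurlyeq\lambda'$, and the transition matrix is unitriangular up to nonzero diagonal entries. Hence $Q_\lambda^\mu=c_\lambda\Phi_\lambda^\mu+\sum_{\lambda'\precneqq\lambda}c_{\lambda'}\Phi_{\lambda'}^\mu$ with $c_\lambda\neq0$; inverting this relation yields
\[
\Phi_\lambda^\mu(a_{\mathbf t})=c_\lambda^{-1}Q_\lambda^\mu(a_{\mathbf t})-\sum_{\lambda'\precneqq\lambda,\,\lambda'\in P_G^+(\mu)}c_\lambda^{-1}c_{\lambda'}\Phi_{\lambda'}^\mu(a_{\mathbf t}),
\]
and then the inductive hypothesis (each $\Phi_{\lambda'}^\mu$ with $\lambda'\precneqq\lambda$ is a linear combination of the $Q_{\lambda''}^\mu$ with $\lambda''\preccurlyeq\lambda'$) substituted into the right-hand side finishes the expansion, with $d_\lambda=c_\lambda^{-1}\neq0$.

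The one genuine obstacle is bookkeeping of the partial order: I must make sure that whenever $\Phi_{\lambda'}^\mu$ appears on the right ($\lambda'\precneqq\lambda$) it really lies in $P_G^+(\mu)$ and really has $|\lambda'_{\mathrm{sph}}|<|\lambda_{\mathrm{sph}}|$ (or $\lambda'\in B(\mu)$), so that the induction is well-founded. This needs the analogue of Remark~\ref{rem:Bmu} one level up: if $\lambda=\nu+\lambda_{\mathrm{sph}}\succcurlyeq\lambda'=\nu'+\lambda'_{\mathrm{sph}}$ with $\nu,\nu'\in B(\mu)$, then comparing coefficients of the simple roots (using $\nu,\nu'$ both restrict to $\mu$ so their difference is a combination of $\omega_i+\omega_{m+n-i}$, and using the explicit root expansions $\lambda_i=\lambda_{i-1}+\sum_{j=i}^{m+n-i}\alpha_j$) forces $\lambda_{\mathrm{sph}}-\lambda'_{\mathrm{sph}}\in\bigoplus_i\mathbb{N}\lambda_i$, so $|\lambda'_{\mathrm{sph}}|\le|\lambda_{\mathrm{sph}}|$ with equality only if $\nu\succcurlyeq\nu'$ in $B(\mu)$. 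Granting this, the degree $|\lambda_{\mathrm{sph}}|$ is a legitimate induction parameter, all intermediate $\Phi_{\lambda'}^\mu$ are covered, and the theorem follows.
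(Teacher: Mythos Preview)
Your proposal is essentially correct and uses exactly the three ingredients the paper invokes (Lemma~\ref{lem:sphericalpolynomials}, Proposition~\ref{prop:leadingtermofphii}, Lemma~\ref{lem:phiaomega1+bomegan}). The paper's own proof is a one-liner citing these results, and the intended argument is simply to compose the three triangular transitions in the forward direction: Lemma~\ref{lem:sphericalpolynomials} writes $\Phi_\lambda^\mu|_A$ as a $\preccurlyeq$-triangular combination of the $F_{\lambda'}$; each $F_{\lambda'}=\phi_1^{d'_1}\cdots\phi_n^{d'_n}\Phi_{\nu'}^\mu|_A$ becomes a $\preccurlyeq$-triangular combination of products $\psi_1^{e_1}\cdots\psi_n^{e_n}Q_{\nu''}^\mu=Q_{\lambda''}^\mu$ by applying Proposition~\ref{prop:leadingtermofphii} to each $\phi_i$ and Lemma~\ref{lem:phiaomega1+bomegan} to $\Phi_{\nu'}^\mu$; all diagonal coefficients are nonzero, so the composite has $d_\lambda\neq0$. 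No separate induction over $P_G^+(\mu)$ is needed.

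Your route instead first expands $Q_\lambda^\mu$ in the $\Phi_{\lambda'}^\mu$, then inverts by induction on $|\lambda_{\mathrm{sph}}|$. This works, but the bookkeeping step you flag is slightly off: the assertion that $\lambda\succcurlyeq\lambda'$ forces $\lambda_{\mathrm{sph}}-\lambda'_{\mathrm{sph}}\in\bigoplus_i\mathbb{N}\lambda_i$ is too strong and not what the root computation gives. What the $\alpha_1$-coefficient argument (exactly as in Remark~\ref{rem:Bmu}) does yield is $|\lambda'_{\mathrm{sph}}|\leq|\lambda_{\mathrm{sph}}|$, which suffices for well-foundedness provided you also run a secondary induction over $B(\mu)$ when equality holds. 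Simpler still is to induct directly on the partial order $\preccurlyeq$, which is well-founded on the finite set $\{\lambda'\in P_G^+(\mu):\lambda'\preccurlyeq\lambda\}$; then the extra combinatorics disappears and your argument collapses to the paper's direct composition.
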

\begin{proof}
 It can be proved by using Lemma \ref{lem:sphericalpolynomials}, Proposition \ref{prop:leadingtermofphii}, and Lemma \ref{lem:phiaomega1+bomegan}.
\end{proof}

\begin{Example}
For $\mu=\omega_{1}+b\omega_{n}$ with $b\in\mathbb{N}$, using Proposition \ref{prop:P+Gmuofrankn}, we have
\[B(\mu)=\{\nu_{i}=\omega_{1+i}+\omega_{m+n-i}+b\omega_{n}\mid i=0,1,\dots,n-1\},\]
and we have the corresponding approximate function $Q^\mu_{\nu_{i}}(a_{\mathbf{t}})$ by \eqref{approfuncQi}. Using Lemma \ref{lem:rqi}, we have
\[(RQ^\mu_{\nu_{i}})(a_{\mathbf{t}})=c_{\nu_{i}}Q^\mu_{\nu_{i}}(a_{\mathbf{t}})
-2(n-i)(b+n-i)Q^\mu_{\nu_{i-1}}(a_{\mathbf{t}}).\]
Then
\[\Phi_{\nu_i}^\mu(a_\mathbf{t})=l\sum_{j=0}^{i}k_j Q_{\nu_j}^\mu(a_\mathbf{t}),\]
where
\[k_j=\frac{(-1)^{i-j}(n-i)_{i-j}(n+b-i)_{i-j}}{(i-j)!(m+n+b-2i+1)_{i-j}}\]
and
\[l=(-1)^i\left(
 \begin{matrix}
 n-1 \\
 i \\
 \end{matrix}
\right)^{-1}\frac{(m+n+b-2i+1)_i}{(-m)_i}\]
since $\Phi_{\nu_i}^\mu(a_\mathbf{t})$ is the eigenfunction of the radial part $R$ of the Casimir operator with eigenvalue~$c_{\nu_i}$. The calculation is similar to the proof of Proposition \ref{prop:leadingtermofphii}.
\end{Example}

\section[Matrix-valued spherical functions with mu=omega\_s+b omega\_n]{Matrix-valued spherical functions with $\boldsymbol{\mu=\omega_s+b\omega_n}$}\label{sec:omegasbomegan}
The goal of this section is to calculate $P_G^+(\omega_s+b\omega_n)$ with $s=0,1,\dots,n$ and $b\in\mathbb{N}$, and approximate the corresponding spherical functions. We recall some notations and results in Section~\ref{subsec:omegas} and Section \ref{geninj}.

\begin{Remark}
Note that, using the notation of \eqref{expressionoflambdaH} and Theorem \ref{gdec},
\[\lambda_H=\omega_{x_1}+\omega_{m+n-y_1}+\omega_{x_2}+\omega_{m+n-y_2}+\cdots+\omega_{x_o}+\omega_{m+n-y_o}+b\omega_n
 \succcurlyeq\sum_{i=1}^{o} \omega_{x_i-y_i}+b\omega_n.\]
Also we have
\[\sum_{i=1}^{o} \omega_{x_i-y_i}+b\omega_n\succcurlyeq\omega_{x_1-y_1+x_2-y_2}+\sum_{i=3}^{o} \omega_{x_i-y_i}+b\omega_n\succcurlyeq\cdots\succcurlyeq\omega_{\sum_{i=1}^{o} (x_i-y_i)}+b\omega_n
 =\omega_s+b\omega_n,\]
where we use Theorem \ref{gdec} iteratively. Then $\lambda_H\succcurlyeq\mu$.
\end{Remark}

\begin{Lemma}
Define $u$ as a linear map from $V_\mu^K$ to $V_{\omega_{x_1-y_1}}^K\otimes V_{\omega_{x_2-y_2}}^K\otimes\cdots\otimes V_{\omega_{x_o-y_o}}^K\otimes \big(V_{\omega_n}^K\big)^{\otimes b}$, where
\begin{gather*}
 u\big(e_{h_1}\wedge e_{h_2}\wedge\cdots\wedge e_{h_s}\otimes(e_\mathrm{N})^{\otimes b}\big)=\sum_{\sigma\in S_s} (-1)^{l(\sigma)}e_{h_{\sigma(1)}}\wedge e_{h_{\sigma(2)}}\wedge\cdots\wedge e_{h_{\sigma(x_1-y_1)}}\\
 \quad\otimes e_{h_{\sigma(x_1-y_1+1)}}\wedge e_{h_{\sigma(x_1-y_1+2)}}\wedge\cdots\wedge e_{h_{\sigma(x_1-y_1+x_2-y_2)}}\\
\quad \otimes\cdots\otimes e_{h_{\sigma(\sum_{i=1}^{o-1} (x_i-y_i)+1)}}\wedge e_{h_{\sigma(\sum_{i=1}^{o-1} (x_i-y_i)+2)}}\wedge\cdots\wedge e_{h_{\sigma(\sum_{i=1}^{o-1} (x_i-y_i)+x_o-y_o)}}\otimes(e_\mathrm{N})^{\otimes b}
\end{gather*}
with $H=\{1\leq h_1<h_2<\cdots<h_s\leq n\}$, then $u$ is a $K$-intertwiner.
\end{Lemma}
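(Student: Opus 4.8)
The plan is to verify directly that $u$ commutes with the $K$-action. Recall that $K=\mathrm{S}(\mathrm{U}(n)\times\mathrm{U}(m))$ and that, because all the constituent modules $V_{\omega_j}^K\cong\bigwedge^j\mathbb{C}^n$ are built from the standard representation of the $\mathrm{U}(n)$ block, it suffices to check that $u$ intertwines the action of $\mathrm{U}(n)$ on $\bigwedge^s\mathbb{C}^n\otimes(\bigwedge^n\mathbb{C}^n)^{\otimes b}$ with the diagonal action on the tensor product on the right (the $\mathrm{U}(m)$ block acts trivially on all factors, so there is nothing to check there, and the determinant condition defining $\mathrm{S}(\mathrm{U}(n)\times\mathrm{U}(m))$ is handled by the $(\bigwedge^n\mathbb{C}^n)^{\otimes b}$ factors which transform identically on both sides). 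Thus the statement reduces to the purely combinatorial claim that the map
\[
\bigwedge\nolimits^{s}\mathbb{C}^{n}\to\bigwedge\nolimits^{r_{1}}\mathbb{C}^{n}\otimes\bigwedge\nolimits^{r_{2}}\mathbb{C}^{n}\otimes\cdots\otimes\bigwedge\nolimits^{r_{o}}\mathbb{C}^{n},
\qquad r_{i}=x_{i}-y_{i},\ \sum r_i=s,
\]
sending $e_{h_1}\wedge\cdots\wedge e_{h_s}$ to the signed sum over $\sigma\in S_s$ of the blockwise wedges, is $\mathrm{GL}(n,\mathbb{C})$-equivariant (hence $\mathrm{U}(n)$-equivariant).

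First I would recognize this map as (a multiple of) the iterated comultiplication on the exterior algebra. The exterior algebra $\bigwedge^{\bullet}\mathbb{C}^n$ is a Hopf algebra whose coproduct $\Delta\colon\bigwedge^{\bullet}\to\bigwedge^{\bullet}\otimes\bigwedge^{\bullet}$ is an algebra map determined by $\Delta(v)=v\otimes1+1\otimes v$ for $v\in\mathbb{C}^n$, and $\Delta$ is manifestly $\mathrm{GL}(n)$-equivariant since $\mathrm{GL}(n)$ acts by Hopf-algebra automorphisms. Restricted to $\bigwedge^s$ and projected onto the component $\bigwedge^{r_1}\otimes\cdots\otimes\bigwedge^{r_o}$, the iterated coproduct sends $e_{h_1}\wedge\cdots\wedge e_{h_s}$ to $\sum_{\sigma}\operatorname{sgn}(\sigma)\,e_{h_{\sigma(1)}}\wedge\cdots\otimes\cdots$, where the sum runs over shuffles; extending the sum over all of $S_s$ (rather than shuffles) just multiplies by the constant $r_1!\,r_2!\cdots r_o!$, since permuting the indices within a single wedge block reproduces the same term with a compensating sign. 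Hence $u$ equals $\big(\prod_i r_i!\big)$ times the composite of the iterated coproduct with the projection, tensored with the identity on $(V_{\omega_n}^K)^{\otimes b}$, and is therefore a $K$-intertwiner.

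The key steps in order are: (i) reduce the $K$-equivariance to $\mathrm{GL}(n,\mathbb{C})$-equivariance of the exterior-algebra part, noting the $\mathrm{U}(m)$-triviality and the cancellation of the $\bigwedge^n\mathbb{C}^n$-twists; (ii) identify $u$ with a constant multiple of the iterated comultiplication of $\bigwedge^{\bullet}\mathbb{C}^n$ followed by projection onto the relevant graded piece; (iii) invoke the standard fact that the coproduct of a Hopf algebra on which $\mathrm{GL}(n)$ acts by automorphisms is equivariant. I expect step (ii) — matching the sign $(-1)^{l(\sigma)}$ and the full-$S_s$ sum against the shuffle description of the coproduct — to be the only place requiring care; concretely one checks that for a fixed shuffle $\tau$, the $r_1!\cdots r_o!$ permutations $\sigma$ that restrict to $\tau$ up to block-internal reorderings all contribute the identical tensor (the block-internal sign exactly compensates the reordering of wedge factors), so the over-counting is by the constant $\prod_i r_i!$ and nothing else. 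Alternatively, and perhaps more cleanly for the paper, one can bypass the Hopf-algebra language entirely and verify directly that $\pi(g)$ applied to the right-hand side, expanded via $\pi(g)(e_i\wedge e_j\wedge\cdots)=\sum(\text{minors})\,e_{(\cdots)}$, reproduces $u$ applied to $\pi(g)(e_{h_1}\wedge\cdots\wedge e_{h_s})$; this is the same Laplace-type bookkeeping already used in the proof of Theorem~\ref{kintertwiner}, and the antisymmetry of the wedge together with the sign $(-1)^{l(\sigma)}$ makes the two expansions coincide.
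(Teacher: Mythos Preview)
Your argument is correct and takes a genuinely different route from the paper. You identify $u$ (up to the harmless scalar $\prod_i r_i!$) with the iterated comultiplication of the graded Hopf algebra $\bigwedge^{\bullet}\mathbb{C}^n$ projected onto the component $\bigwedge^{r_1}\otimes\cdots\otimes\bigwedge^{r_o}$, and then appeal to the fact that $\mathrm{GL}(n)$ acts by Hopf-algebra automorphisms, so the coproduct is automatically equivariant. The reduction from $K$-equivariance to $\mathrm{U}(n)$-equivariance (and hence $\mathrm{GL}(n,\mathbb{C})$-equivariance) via the projection $K\to\mathrm{U}(n)$ is sound, and your check that summing over all of $S_s$ rather than over $(r_1,\dots,r_o)$-shuffles only introduces the constant $\prod_i r_i!$ is exactly right: the block-internal sign of $\sigma$ cancels against the reordering sign of the wedge factors.

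The paper proceeds instead by an elementary Lie-algebra computation: it checks directly that $u$ commutes with each Chevalley generator $E_{i,i+1}$, $E_{i+1,i}$, $E_{ii}-E_{i+1,i+1}$ of $\mathfrak{k}$, via a case analysis on whether $i$ and $i{+}1$ belong to $H$. This is more hands-on and avoids any reference to the Hopf structure, at the cost of several sub-cases. Your approach is more conceptual and explains \emph{why} the map is equivariant rather than verifying it generator by generator; the paper's approach is self-contained and requires no outside facts. Note that neither of your two suggested routes (coproduct, or direct group-level Laplace bookkeeping as in Theorem~\ref{kintertwiner}) is the one actually taken in the paper, which works at the Lie-algebra level.
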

\begin{proof}
Let $E_{ij}$ be $(n+m)\times (n+m)$-matrix with one non-zero entry $1$ at $(i,j)$-entry. Then
\[\{E_{i,i+1}, E_{i+1,i},E_{ii}-E_{i+1,i+1}\mid i=1,2,\dots,n-1,n+1,n+2,\dots,n+m\}\]
can be considered as the Chevalley basis of the complex Lie algebra of $K$. So we need to prove~$u$ acting on $V_\mu^K$ commutes with the Chevalley basis action.

If $i,i+1\notin H$, then
\begin{align*}
&E_{i,i+1}u\big(e_{h_1}\wedge e_{h_2}\wedge\cdots\wedge e_{h_s}\otimes(e_\mathrm{N})^{\otimes b} \big)=u E_{i,i+1}\big(e_{h_1}\wedge e_{h_2}\wedge\cdots\wedge e_{h_s}\otimes(e_\mathrm{N})^{\otimes b} \big)=0,\\
&E_{i+1,i}u\big(e_{h_1}\wedge e_{h_2}\wedge\cdots\wedge e_{h_s}\otimes(e_\mathrm{N})^{\otimes b} \big)=u E_{i+1,i}\big(e_{h_1}\wedge e_{h_2}\wedge\cdots\wedge e_{h_s}\otimes(e_\mathrm{N})^{\otimes b} \big)=0,\\
&(E_{ii}-E_{i+1,i+1})u\big(e_{h_1}\wedge e_{h_2}\wedge\cdots\wedge e_{h_s}\otimes(e_\mathrm{N})^{\otimes b} \big)\\
&\qquad =u (E_{ii}-E_{i+1,i+1})\big(e_{h_1}\wedge e_{h_2}\wedge\cdots \wedge e_{h_s}\otimes(e_\mathrm{N})^{\otimes b} \big)=0.
 \end{align*}

If $i,i+1\in H$, then
\begin{align*}
 &u E_{i,i+1}\big(e_{h_1}\wedge e_{h_2}\wedge\cdots\wedge e_{h_s}\otimes(e_\mathrm{N})^{\otimes b} \big)=0,\\
 &u E_{i+1,i}\big(e_{h_1}\wedge e_{h_2}\wedge\cdots\wedge e_{h_s}\otimes(e_\mathrm{N})^{\otimes b} \big)=0,\\
 &u (E_{ii}-E_{i+1,i+1})\big(e_{h_1}\wedge e_{h_2}\wedge\cdots\wedge e_{h_s}\otimes(e_\mathrm{N})^{\otimes b} \big)\\
 &\qquad{} =(E_{ii}-E_{i+1,i+1})u\big(e_{h_1}\wedge e_{h_2}\wedge\cdots \wedge e_{h_s}\otimes(e_\mathrm{N})^{\otimes b} \big)=0.
 \end{align*}
Let $e_{X_1}\otimes e_{X_2}\otimes\cdots\otimes e_{X_o}\otimes(e_\mathrm{N})^{\otimes b}$ be a monomial in $u\big(e_{h_1}\wedge e_{h_2}\wedge\cdots\wedge e_{h_s} \otimes(e_\mathrm{N})^{\otimes b}\big)$ with permutation $\sigma$, then we have two possibilities.
\begin{itemize}\itemsep=0pt
 \item If $i,i+1\in X_k$, then
 \[E_{i,i+1}e_{X_1}\otimes e_{X_2}\otimes\cdots\otimes e_{X_o}\otimes(e_\mathrm{N})^{\otimes b}=0.\]
 \item If $i\in X_k$, $i+1\in X_l$ and $k\neq l$, then there is another monomial $e_{X'_1}\otimes e_{X'_2}\otimes\cdots\otimes e_{X'_o}\otimes(e_\mathrm{N})^{\otimes b}$ in $u\big(e_{h_1}\wedge e_{h_2}\wedge\cdots\wedge e_{h_s}\otimes(e_\mathrm{N})^{\otimes b} \big)$ with permutation $\sigma'$ where we flip the order of $i$ and $i+1$ in $X'_j$'s and $l(\sigma')=l(\sigma)+1$. So $E_{i,i+1}e_{X_1}\otimes e_{X_2}\otimes\cdots\otimes e_{X_o}\otimes(e_\mathrm{N})^{\otimes b}=E_{i,i+1}e_{X'_1}\otimes e_{X'_2}\otimes\cdots\otimes e_{X'_o}\otimes(e_\mathrm{N})^{\otimes b}$ is a monomial where we change $i+1$ to $i$ in $X_l$ and~$X'_k$. Then
\[E_{i,i+1}u\big(e_{h_1}\wedge e_{h_2}\wedge\cdots\wedge e_{h_s}\otimes(e_\mathrm{N})^{\otimes b}\big)=0.\]
Similarly, we have
\[E_{i+1,i}u\big(e_{h_1}\wedge e_{h_2}\wedge\cdots\wedge e_{h_s}\otimes(e_\mathrm{N})^{\otimes b}\big)=0.\]
\end{itemize}
If $i\notin H$, $i+1\in H$, then we have
\[u E_{i,i+1}\big(e_{h_1}\wedge e_{h_2}\wedge\cdots\wedge e_{h_s}\otimes(e_\mathrm{N})^{\otimes b}\big)=E_{i,i+1}u \big(e_{h_1}\wedge e_{h_2}\wedge\cdots\wedge e_{h_s}\otimes(e_\mathrm{N})^{\otimes b}\big),\]
where we change $i+1$ to $i$ in $H=\{h_1,h_2,\dots,h_s\}$. Also we have
\[u E_{i+1,i}\big(e_{h_1}\wedge e_{h_2}\wedge\cdots\wedge e_{h_s}\otimes(e_\mathrm{N})^{\otimes b}\big)=E_{i+1,i}u \big(e_{h_1}\wedge e_{h_2}\wedge\cdots\wedge e_{h_s}\otimes(e_\mathrm{N})^{\otimes b}\big)=0,\]
and
\begin{align*}
 &u(E_{ii}-E_{i+1,i+1})\big(e_{h_1}\wedge e_{h_2}\wedge\cdots\wedge e_{h_s}\otimes(e_\mathrm{N})^{\otimes b}\big)=(E_{ii}-E_{i+1,i+1})u\big(e_{h_1}\wedge e_{h_2}\wedge\cdots\\
 &\quad\wedge e_{h_s}\otimes(e_\mathrm{N})^{\otimes b}\big)=-(b+1)u\big(e_{h_1}\wedge e_{h_2}\wedge\cdots\wedge e_{h_s}\otimes(e_\mathrm{N})^{\otimes b}\big).
\end{align*}
The situation of $i\in H$, $i+1\notin H$ is similar.
\end{proof}

We define a $K$-intertwiner $w$ from $u\big(V_{\omega_s+b\omega_n}^K\big)$ to $W_{\lambda}^G$ using Theorem \ref{kintertwiner} such that
\begin{align*}
 &w u\big(e_{h_1}\wedge e_{h_2}\wedge\cdots\wedge e_{h_{s}}\otimes(e_\mathrm{N})^{\otimes b}\big)=\sum_{\sigma\in S_s} (-1)^{l(\sigma)}
 \Bigg(\sum_{\substack{P_1\subset \mathrm{N}\\|P_1|=y_1}} (-1)^{b(P_1)}e_{h_{\sigma(1)}}\wedge e_{h_{\sigma(2)}}\wedge\cdots\\&
 \wedge e_{h_{\sigma(x_1-y_1)}}\wedge e_{P_1}\otimes e_{\mathrm{N}\backslash P_1}\wedge e_{\mathrm{M}}\Bigg)\otimes \Bigg(\sum_{\substack{P_2\subset \mathrm{N}\\|P_2|=y_2}} (-1)^{b(P_2)}e_{h_{\sigma(x_1-y_1+1)}}\wedge e_{h_{\sigma(x_1-y_1+2)}}\wedge\cdots\\
 &\wedge e_{h_{\sigma(x_1-y_1+x_2-y_2)}}\wedge e_{P_2}\otimes e_{\mathrm{N}\backslash P_1}\wedge e_{\mathrm{M}}\Bigg)
 \otimes\cdots\otimes \Bigg(\sum_{\substack{P_o\subset \mathrm{N}\\|P_o|=y_o}} (-1)^{b(P_o)}e_{h_{\sigma(\sum_{i=1}^{o-1} (x_i-y_i)+1)}}\\
 &\wedge e_{h_{\sigma(\sum_{i=1}^{o-1} (x_i-y_i)+2)}}\! \wedge\cdots\wedge e_{h_{\sigma(\sum_{i=1}^{o-1} (x_i-y_i)+x_o-y_o)}}\!\wedge e_{P_o}\otimes e_{\mathrm{N}\backslash P_o}\wedge e_{\mathrm{M}}\Bigg)\!\otimes(e_\mathrm{N})^{\otimes b}
 \otimes \bigotimes_{i=1}^n (v'_i)^{\otimes d_i},
\end{align*}
where $v'_i$ is $K$-fixed vector in $V_{\omega_i}^G\otimes V_{\omega_{n+m-i}}^G$, see Section \ref{geninj}. Let $j_W=w u$, then we can calculate the approximate function $Q_{\lambda}^\mu$. We let
\[Q^\mu_{\lambda} \colon \ A \to \operatorname{End}\big(V^K_\mu\big),
\qquad
a_\mathbf{t}\mapsto
 p\cdot j_W^\ast \circ \pi_{W_{\lambda}^{G}}(a_\mathbf{t}) \circ j_W,\]
where
\[p=\prod_{j=1}^{o} \left(
 \begin{matrix}
 n-x_j+y_j \\
 y_j \\
 \end{matrix}
\right)\cdot \prod_{i=1}^n \left(
 \begin{matrix}
 n \\
 i \\
 \end{matrix}
\right)^{d_i},
\]
and we choose $e_{H}\otimes(e_\mathrm{N})^{\otimes b}$($H=\{1\leq h_1<h_2<\cdots<h_s\leq n\}$) as basis of $V_\mu^K$. The entry of~$Q_{\nu_i}^{\mu}(a_{\mathbf{t}})$ corresponding to $e_1\wedge e_2\wedge\cdots\wedge e_s\otimes(e_\mathrm{N})^{\otimes b}$ is
\begin{gather*}
 f=\cos t_1\cos t_2\cdots \cos t_s(\cos t_1\cos t_2\cdots \cos t_n)^b\psi_1^{d_1}\psi_2^{d_2}\cdots\psi_n^{d_n}\\
 \hphantom{f=}{}\times\sum_{\sigma\in S_s} \psi_{y_1}^{\{\sigma(1),\sigma(2),\dots,\sigma(x_1-y_1)\} }
 \psi_{y_2}^{\{\sigma(x_1-y_1+1),\sigma(x_1-y_1+2),\dots,\sigma(x_1-y_1+x_2-y_2)\} }\cdots\\
 \hphantom{f=}{}\times\psi_{y_o}^{\{\sigma(\sum_{i=1}^{o-1} (x_i-y_i)+1),
 \sigma(\sum_{i=1}^{o-1}
 (x_i-y_i)+2),\dots,\sigma(\sum_{i=1}^{o-1} (x_i-y_i)+x_o-y_o)\}},
\end{gather*}
where the notations are given by Appendix \ref{app:casimirop}. Other entries can be calculated using Remark~\ref{rem:weylgroupelementaction}.
\begin{Remark}\label{rem:polynomialspartialordering}
We define the lattice of $\omega_1,\omega_2,\dots,\omega_n,\omega_m,\omega_{m+1},\dots,\omega_{n+m-1}$ as
\[\overline{P}=\Bigg\{\sum_{i=1}^n a_i\omega_i+\sum_{i=m}^{n+m-1} a_i\omega_i\mid a_i\in\mathbb{Z}\Bigg\}.\]
We define
\[\omega_i|_A=\omega_{n+m-i}|_A=\mathrm{i}\sum_{j=1}^{i}t_j,\qquad i=1,2,\dots,n.\]
Then for $\sum_{i=1}^{n}a_i\omega_i+\sum_{i=m}^{n+m-1}a_i\omega_i=\eta\in\overline{P}$, we have
\[\eta|_A=\sum_{i=1}^n (a_i+a_{n+m-i})\mathrm{i}\sum_{j=1}^{i}t_j.\]
If there is another weight $\eta'\in\overline{P}$ such that $\eta'|_A=\eta|_A$, then we have
\[\eta-\eta'=\sum_{i=1}^n b_i(\omega_i-\omega_{n+m-i}),\qquad b_i\in\mathbb{Z}.\]
Using the result in \cite[p.~250]{Bour}, where
\begin{gather*}
 \omega_i=\frac{1}{n+m}\big[(n+m-i)\alpha_1+2(n+m-i)\alpha_2+\cdots+(i-1)(n+m-i)\alpha_{i-1}\\
\hphantom{\omega_i=\frac{1}{n+m}\big[}{} +i(n+m-i)\alpha_i+i(n+m-1-i)\alpha_{i+1}+\cdots+i\alpha_{n+m-1}\big],
\end{gather*}
then it leads to $\eta-\eta'\notin Q^+\cup(-Q^+)$. We let $f_H$ be the entry corresponding to $e_H\otimes (e_\mathrm{N})^{\otimes b}$ in~$Q_{\lambda_H}^\mu(a_\mathbf{t})$ and we have
\[f_H=k_{\lambda_H}{\rm e}^{\lambda_H|_A}+\sum_{\lambda'\precneqq\lambda_H} k_{\lambda'}{\rm e}^{\lambda'|_A},\qquad k_{\lambda_H}\neq0.\]
Similarly, $\psi_i(\cos \mathbf{t})$ can be written as
\[\psi_i(\cos \mathbf{t})=k_{\lambda_i}{\rm e}^{\lambda_i|_A}+\sum_{\tau\precneqq\lambda_i} k_{\tau}{\rm e}^{\tau|_A},\qquad k_{\lambda_i}\neq0.\]
\end{Remark}
\begin{Proposition}\label{prop:B'mu}
We have
\[\Phi_{\lambda_H}^\mu(a_\mathbf{t})=\sum_{\lambda'\preccurlyeq\lambda_H} d_{\lambda'}Q_{\lambda'}^\mu(a_\mathbf{t}),\qquad d_{\lambda_H}\neq0.\]
Moreover, we have $\lambda_H\in P_G^+(\mu)$.
\end{Proposition}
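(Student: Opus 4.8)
The plan is to follow the scheme of Lemma~\ref{lem:phiaomega1+bomegan}, with the extra input that $\lambda_H\in P_G^+(\mu)$, proved by a leading‑exponential argument. Throughout I use the identity $Q^\mu_{\lambda_{H'}+\lambda_{\rm sph}}(a_{\mathbf t})=\psi_1^{d_1}\cdots\psi_n^{d_n}\,Q^\mu_{\lambda_{H'}}(a_{\mathbf t})$ for $\lambda_{\rm sph}=\sum_i d_i\lambda_i\in P_G^+(0)$, which is immediate from the definition of $Q^\mu_\lambda$: the factor $\prod_i\binom{n}{i}^{d_i}$ in $p$ cancels the norms $\lVert v_i'\rVert^{2d_i}=\binom{n}{i}^{d_i}$ and $\langle\pi(a_{\mathbf t})v_i',v_i'\rangle=\psi_i(\cos\mathbf t)$ by the definition of $\psi_i$ in Section~\ref{geninj}.

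\emph{Step 1: $\lambda_H\in P_G^+(\mu)$.} The map $j_W=wu\colon V_\mu^K\to W^G_{\lambda_H}$ is a nonzero $K$-intertwiner and $W^G_{\lambda_H}\cong\bigoplus_{\lambda'\preccurlyeq\lambda_H}m_{\lambda'}V^G_{\lambda'}$ with $m_{\lambda_H}=1$. Writing $j_W=\sum_{\lambda'\preccurlyeq\lambda_H}j_{\lambda'}$ for the decomposition into the $V^G_{\lambda'}$-isotypic components and using that $\pi_{W^G_{\lambda_H}}(a_{\mathbf t})$ preserves those components, one gets $Q^\mu_{\lambda_H}(a_{\mathbf t})=p\sum_{\lambda'\preccurlyeq\lambda_H}j_{\lambda'}^{\ast}\pi^G_{\lambda'}(a_{\mathbf t})j_{\lambda'}$. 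If $j_{\lambda'}\neq0$ then $\lambda'\in P_G^+(\mu)$ (combining a nonzero $K$-map with Lemma~\ref{deitgkmmultifree}), and a short computation using that $\Phi^\mu_{\lambda'}$ is independent of the $K$-embedding (the discussion after Lemma~\ref{newdef}) shows $j_{\lambda'}^{\ast}\pi^G_{\lambda'}(a_{\mathbf t})j_{\lambda'}=d_{\lambda'}\Phi^\mu_{\lambda'}(a_{\mathbf t})$ for some $d_{\lambda'}\in\mathbb C$, nonzero whenever $j_{\lambda'}\neq0$. Hence $Q^\mu_{\lambda_H}(a_{\mathbf t})=\sum_{\lambda'\preccurlyeq\lambda_H,\ \lambda'\in P_G^+(\mu)}d_{\lambda'}\Phi^\mu_{\lambda'}(a_{\mathbf t})$. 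Now the entry $f_H$ of $Q^\mu_{\lambda_H}(a_{\mathbf t})$ at $e_H\otimes(e_{\mathrm N})^{\otimes b}$ equals a linear combination of the corresponding entries of the $\Phi^\mu_{\lambda'}(a_{\mathbf t})$, each of which is a combination of exponentials ${\rm e}^{\eta|_A}$ with $\eta$ a weight of $V^G_{\lambda'}$, so $\eta\preccurlyeq\lambda'\preccurlyeq\lambda_H$. By the argument of Remark~\ref{rem:polynomialspartialordering}, ${\rm e}^{\eta|_A}={\rm e}^{\lambda_H|_A}$ together with $\lambda_H-\eta\in Q^+$ forces $\eta=\lambda_H$. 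Therefore, if $d_{\lambda_H}=0$ (in particular if $\lambda_H\notin P_G^+(\mu)$, which forces $j_{\lambda_H}=0$), the exponential ${\rm e}^{\lambda_H|_A}$ does not occur in $f_H$, contradicting $f_H=k_{\lambda_H}{\rm e}^{\lambda_H|_A}+\sum_{\lambda'\precneqq\lambda_H}k_{\lambda'}{\rm e}^{\lambda'|_A}$ with $k_{\lambda_H}\neq0$ from Remark~\ref{rem:polynomialspartialordering}. So $d_{\lambda_H}\neq0$ and $\lambda_H\in P_G^+(\mu)$.

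\emph{Step 2: the expansion.} By Step~1, $Q^\mu_{\lambda_H}(a_{\mathbf t})=d_{\lambda_H}\Phi^\mu_{\lambda_H}(a_{\mathbf t})+\sum_{\lambda'\precneqq\lambda_H,\ \lambda'\in P_G^+(\mu)}d_{\lambda'}\Phi^\mu_{\lambda'}(a_{\mathbf t})$ with $d_{\lambda_H}\neq0$, which I invert by induction on $\lambda_H\in B'(\mu)$ with respect to $\preccurlyeq$. Since $\lambda_H\succcurlyeq\mu$ for every $\lambda_H\in B'(\mu)$ and, by Lemma~\ref{lem:PG+mudec}, every $\lambda\in P_G^+(\mu)$ satisfies $\lambda=\lambda_H+\lambda_{\rm sph}\succcurlyeq\lambda_H\succcurlyeq\mu$, the weight $\mu$ is the $\preccurlyeq$-minimum of $P_G^+(\mu)$; hence for $\lambda_H=\mu$ the above relation reads $Q^\mu_\mu=d_\mu\Phi^\mu_\mu$ and the base case follows (alternatively, this is Remark~\ref{nueqmu}). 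For the inductive step, solve for $\Phi^\mu_{\lambda_H}$ and treat each $\Phi^\mu_{\lambda'}$ with $\lambda'\precneqq\lambda_H$, $\lambda'\in P_G^+(\mu)$: write $\lambda'=\lambda_{H'}+\lambda'_{\rm sph}$ with $\lambda_{H'}\in B'(\mu)$ (Lemma~\ref{lem:PG+mudec}); then $\lambda_{H'}\preccurlyeq\lambda'\precneqq\lambda_H$, so the inductive hypothesis applies to $\lambda_{H'}$. By Lemma~\ref{lem:sphericalpolynomials}, $\Phi^\mu_{\lambda'}|_A=\sum_{\lambda''\preccurlyeq\lambda'}q_{\lambda''}F_{\lambda''}$ with $F_{\lambda''}=\phi_1^{e_1}\cdots\phi_n^{e_n}\,\Phi^\mu_{\lambda_{H''}}|_A$ for $\lambda''=\lambda_{H''}+\sum_i e_i\lambda_i$; by Proposition~\ref{prop:leadingtermofphii} each $\phi_i$ is a linear combination of $\psi_1,\dots,\psi_i$, so $\phi_1^{e_1}\cdots\phi_n^{e_n}$ is a linear combination of monomials $\psi^\tau$ with $\tau\in P_G^+(0)$, $\tau\preccurlyeq\sum_i e_i\lambda_i$; and by the inductive hypothesis $\Phi^\mu_{\lambda_{H''}}$ is a linear combination of the $Q^\mu_{\lambda'''}$ with $\lambda'''\preccurlyeq\lambda_{H''}$, $\lambda'''\in P_G^+(\mu)$. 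Multiplying out, using $\psi^\tau Q^\mu_{\lambda'''}=Q^\mu_{\lambda'''+\tau}$ and Remark~\ref{Bomega2induction} (to keep all resulting weights in $P_G^+(\mu)$ and $\preccurlyeq\lambda'$), each $\Phi^\mu_{\lambda'}$ is a linear combination of the $Q^\mu_{\lambda''}$ with $\lambda''\preccurlyeq\lambda'$, $\lambda''\in P_G^+(\mu)$. Substituting back into $\Phi^\mu_{\lambda_H}=\frac1{d_{\lambda_H}}\bigl(Q^\mu_{\lambda_H}-\sum_{\lambda'\precneqq\lambda_H}d_{\lambda'}\Phi^\mu_{\lambda'}\bigr)$ gives the asserted expansion, with $Q^\mu_{\lambda_H}$ occurring only in the first summand, hence with coefficient $1/d_{\lambda_H}\neq0$.

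\emph{Main obstacle.} The crux is Step~1: knowing that the entry $f_H$ of $Q^\mu_{\lambda_H}$ has ${\rm e}^{\lambda_H|_A}$ as its genuine leading term with nonzero coefficient, and that no $\lambda'\precneqq\lambda_H$ can contribute this exponential — both supplied by the weight‑lattice versus $A$-restriction analysis of Remark~\ref{rem:polynomialspartialordering}. The rest is the bookkeeping of the triangular inversion over the product structure $B'(\mu)\times\mathbb N^n$, where Lemma~\ref{lem:PG+mudec} and Remark~\ref{Bomega2induction} keep every index that arises inside $P_G^+(\mu)$.
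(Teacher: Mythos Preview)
Your overall scheme matches the paper's: decompose $Q^\mu_{\lambda_H}$ as a triangular sum of $\Phi^\mu_{\lambda'}$'s via the tensor decomposition of $W^G_{\lambda_H}$, show the top coefficient is nonzero by the leading-exponential argument of Remark~\ref{rem:polynomialspartialordering}, and invert. The difference is organisational: the paper intertwines the two steps inside a single induction, while you separate them into Step~1 (no induction) and Step~2 (induction).

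That separation, however, creates a genuine gap in Step~1. You argue that the entry of $\Phi^\mu_{\lambda'}(a_{\mathbf t})$ at $e_H\otimes(e_{\mathrm N})^{\otimes b}$ is a combination of exponentials ${\rm e}^{\eta|_A}$ with $\eta$ a weight of $V^G_{\lambda'}$, hence $\eta\preccurlyeq\lambda'\preccurlyeq\lambda_H$, and then invoke Remark~\ref{rem:polynomialspartialordering} to conclude that $\eta|_A=\lambda_H|_A$ and $\lambda_H-\eta\in Q^+$ force $\eta=\lambda_H$. But Remark~\ref{rem:polynomialspartialordering} proves this implication only for $\eta\in\overline P$, and an arbitrary weight of $V^G_{\lambda'}$ need not lie in $\overline P$. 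Concretely, for any $n<i<m$ one has $\alpha_i|_{\mathfrak a'}=0$, so $\eta=\lambda_H-\alpha_i$ satisfies $\eta|_A=\lambda_H|_A$ and $\lambda_H-\eta=\alpha_i\in Q^+$ with $\eta\neq\lambda_H$. Thus the step ``forces $\eta=\lambda_H$'' is not justified as written.

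The paper sidesteps exactly this issue by doing the exponential comparison \emph{after} applying the induction hypothesis: it first rewrites $\sum_{\lambda'\precneqq\lambda_H}p_{\lambda'}\Phi^\mu_{\lambda'}$ as $\sum_{\lambda'\precneqq\lambda_H}\bar p_{\lambda'}Q^\mu_{\lambda'}$, and then compares exponentials of the $Q$'s. The point is that the leading exponentials of the explicit functions $Q^\mu_{\lambda'}=\psi_1^{d_1}\cdots\psi_n^{d_n}Q^\mu_{\lambda_{H'}}$ at the $e_H$-entry are of the form $\lambda'_{\rm sph}+w(\lambda_{H'})$, which do lie in $\overline P$; only then is Remark~\ref{rem:polynomialspartialordering} applicable. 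If you want to keep your two-step structure, you would need an additional argument restricting which weights $\eta$ can actually occur in the $e_H$-entry of $\Phi^\mu_{\lambda'}$ (for instance via the $M$-weight of $v_H$), and that argument is not supplied.
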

\begin{proof}
It is true for $\lambda_H=\mu$ since $Q_\mu^\mu(a_\mathbf{t})=\Phi_\mu^\mu(a_\mathbf{t})$ by Remark \ref{nueqmu}, and we assume it is true for $\lambda_H\precneqq\nu\in P_G^+$. For $\lambda_H=\nu$, $Q_{\lambda_H}^\mu(a_\mathbf{t})$ is the linear combination of $\Phi_{\lambda'}^{\mu}(a_\mathbf{t})$ where $\lambda'\preccurlyeq\lambda_H$. Note that $\Phi_{\lambda'}^\mu(a_\mathbf{t})\equiv0$ if $\lambda'\notin P_G^+(\mu)$ by definition. Explicitly,
\[Q_{\lambda_H}^\mu(a_\mathbf{t})=p_{\lambda_H}\Phi_{\lambda_H}^\mu(a_\mathbf{t})+\sum_{\lambda'\precneqq\lambda_H} p_{\lambda'}\Phi_{\lambda'}^\mu(a_\mathbf{t})=p_{\lambda_H}\Phi_{\lambda_H}^\mu(a_\mathbf{t})+\sum_{\lambda'\precneqq\lambda_H} \bar{p}_{\lambda'}Q_{\lambda'}^\mu(a_\mathbf{t}).\]
For any $\lambda'\in P_G^+(\mu)$, we have
\[\lambda'=\lambda_{H'}+\lambda'_{\rm sph}=\lambda_{H'}+\sum_{i=1}^n d_i(\omega_i+\omega_{n+m-i}),\]
see Lemma \ref{lem:PG+mudec}.

Now we prove $p_{\lambda_H}\neq0$. For the entry of $Q_{\lambda_H}^\mu(a_\mathbf{t})$ corresponding to $e_H\otimes (e_\mathrm{N})^{\otimes b}$, we have
\[f_H=k_{\lambda_H}{\rm e}^{\lambda_H|_A}+\sum_{\lambda'\precneqq\lambda_H} k_{\lambda'}{\rm e}^{\lambda'|_A},\qquad k_{\lambda_H}\neq0.\]
For other $Q_{\lambda'}^\mu(a_\mathbf{t})$, where $P_G^+(\mu)\ni\lambda_{H'}+\sum_{i=1}^{n}d_i(\omega_i+\omega_{n+m-i})=\lambda'\precneqq\lambda_H$, the corresponding entry is
\[\prod_{i=1}^n \bigg(k_{\lambda_i}{\rm e}^{\lambda_i|_A}+\sum_{\tau\precneqq\lambda_i} k_{\tau}{\rm e}^{\tau|_A}\bigg)^{d_i}
\bigg(k_{\lambda_{H'}}{\rm e}^{w(\lambda_{H'})|_A}+\sum_{\xi\precneqq\lambda_{H'}} k_\xi {\rm e}^{w(\xi)|_A}\bigg),\]
and $\sum_{i=1}^{n}d_i(\omega_i+\omega_{n+m-i})+w(\lambda_{H'})\preccurlyeq\lambda'\precneqq\lambda_H$, where $\sum_{i=1}^{n}d_i(\omega_i+\omega_{n+m-i})+w(\lambda_{H'})$, $\lambda',\lambda_H\in\overline{P}$.
If $p_{\lambda_H}=0$, then there exists a weight $\eta\precneqq\lambda_H$ such that $\eta|_A=\lambda_H|_A$ which contradicts the result mentioned in Remark \ref{rem:polynomialspartialordering}. So this proposition is proved.
\end{proof}

Similarly, we have
\begin{Theorem}\label{thm:omegasbomegan}
Let $\lambda\in B'(\mu)+P_G^+(0)$, then we have
\begin{equation}\label{lambdaforaomega1+bomega2}
 \Phi_{\lambda}^\mu(a_\mathbf{t})=\sum_{\lambda'\preccurlyeq\lambda} d_{\lambda'}Q_{\lambda'}^\mu(a_\mathbf{t}),\qquad d_{\lambda}\neq0
 \end{equation}
and
\[P_G^+(\mu)=B'(\mu)+P_G^+(0).\]
\end{Theorem}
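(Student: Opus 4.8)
The plan is to establish the two assertions of the theorem in the reverse order: first the set identity $P_G^+(\mu)=B'(\mu)+P_G^+(0)$, which is what makes the right-hand side of the expansion meaningful, and then the $\preccurlyeq$-triangular expansion itself. This follows the same scheme as the case $\mu=a\omega_1+b\omega_n$ in Section~\ref{exa:aomega1+bomegan}, with Proposition~\ref{prop:B'mu} and Lemma~\ref{lem:PG+mudec} playing the roles of Lemma~\ref{lem:phiaomega1+bomegan} and Proposition~\ref{prop:P+Gmuofrankn}.

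For the set identity, the inclusion $P_G^+(\mu)\subseteq B'(\mu)+P_G^+(0)$ is precisely Lemma~\ref{lem:PG+mudec}. For the reverse inclusion, Proposition~\ref{prop:B'mu} already gives $\lambda_H\in P_G^+(\mu)$ for every $\lambda_H\in B'(\mu)$. Given a spherical weight $\lambda_{\rm sph}=\sum_{i=1}^n d_i(\omega_i+\omega_{m+n-i})\in P_G^+(0)$, I would iterate the stability inequality $[V_{\lambda+\omega_i+\omega_{m+n-i}}^G|_K:V_\mu^K]\ge[V_\lambda^G|_K:V_\mu^K]$ of Remark~\ref{Bomega2induction}, starting from $\lambda=\lambda_H$, to obtain $[V_{\lambda_H+\lambda_{\rm sph}}^G|_K:V_\mu^K]\ge 1$; since the triple is multiplicity free by Lemma~\ref{deitgkmmultifree}, this multiplicity is $1$, so $\lambda_H+\lambda_{\rm sph}\in P_G^+(\mu)$. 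Combining the inclusions gives the identity, and uniqueness of the decomposition $\lambda=\lambda_H+\lambda_{\rm sph}$, hence the product structure $B'(\mu)\times\mathbb N^n$, follows from Remark~\ref{rem:polynomialspartialordering}: two distinct weights of $\overline P$ with equal restriction to $A$ are $\preccurlyeq$-incomparable.

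For the expansion I would fix $\lambda=\lambda_H+\sum_{i=1}^n d_i\lambda_i\in P_G^+(\mu)$ with $\lambda_H\in B'(\mu)$, so that by construction of the approximate functions $Q_\lambda^\mu|_A=\psi_1^{d_1}\cdots\psi_n^{d_n}\,Q_{\lambda_H}^\mu|_A$. By Lemma~\ref{lem:sphericalpolynomials}, applied with $B'(\mu)$ as the bottom set (legitimate by the product structure just proved), $\Phi_\lambda^\mu|_A=q_\lambda F_\lambda+\sum_{\lambda'\precneqq\lambda}q_{\lambda'}F_{\lambda'}$ with $q_\lambda\neq0$ and $F_{\lambda'}=\phi_1^{d'_1}\cdots\phi_n^{d'_n}\,\Phi_{\lambda'_H}^\mu|_A$, where $\lambda'=\lambda'_H+\sum_i d'_i\lambda_i$ with $\lambda'_H\in B'(\mu)$. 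Now I would substitute $\phi_i=l\sum_{j\le i}k_j\psi_j$ with $l,k_i\neq0$ (Proposition~\ref{prop:leadingtermofphii}) and $\Phi_{\lambda'_H}^\mu|_A=\sum_{\lambda''\preccurlyeq\lambda'_H}d_{\lambda''}Q_{\lambda''}^\mu|_A$ with $d_{\lambda'_H}\neq0$ (Proposition~\ref{prop:B'mu}), expand the product, and use $\psi_1^{e_1}\cdots\psi_n^{e_n}Q_{\lambda''}^\mu=Q_{\lambda''+\sum_i e_i\lambda_i}^\mu$ together with the set identity, so that every index occurring lies in $P_G^+(\mu)$. Each $F_{\lambda'}$ then becomes a $\preccurlyeq$-triangular combination of the $Q^\mu$'s; the top term of $F_\lambda$ is $\big(\prod_i(lk_i)^{d_i}\big)d_{\lambda_H}\,Q_\lambda^\mu|_A$, and no $F_{\lambda'}$ with $\lambda'\precneqq\lambda$ contributes to $Q_\lambda^\mu$. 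Substituting back gives $\Phi_\lambda^\mu|_A=d_\lambda Q_\lambda^\mu|_A+\sum_{\lambda'\precneqq\lambda,\ \lambda'\in P_G^+(\mu)}d_{\lambda'}Q_{\lambda'}^\mu|_A$ with $d_\lambda=q_\lambda\big(\prod_i(lk_i)^{d_i}\big)d_{\lambda_H}\neq0$, which is the claim; the base point $\lambda_H=\mu$, $d_i=0$ is $\Phi_\mu^\mu=Q_\mu^\mu$ of Remark~\ref{nueqmu}.

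The main obstacle is not new representation theory but the order bookkeeping: one has to be sure that at every multiplication step the lower-order terms stay inside $\operatorname{span}\{Q_{\lambda'}^\mu:\lambda'\in P_G^+(\mu),\ \lambda'\precneqq\lambda\}$ and that the coefficient of $Q_\lambda^\mu$ is not perturbed by them. This rests on two things already available: the set identity, which guarantees that the products $\psi^e Q_{\lambda''}^\mu$ are genuine $Q_{\lambda'}^\mu$ with $\lambda'\in P_G^+(\mu)$; and the weight estimates $\psi_i=k_{\lambda_i}{\rm e}^{\lambda_i|_A}+(\text{lower})$, $Q_{\lambda_H}^\mu=(\text{const}){\rm e}^{\lambda_H|_A}+(\text{lower})$ of Remark~\ref{rem:polynomialspartialordering}, which fix the leading $A$-weight of $Q_\lambda^\mu$ at $\lambda|_A$ and make the transition between $\{F_{\lambda'}\}$ and $\{Q_{\lambda'}^\mu\}$ triangular with nonzero diagonal, so that in particular the $Q^\mu$'s indexed by $P_G^+(\mu)$ are linearly independent and the expansion, hence the assertion $d_\lambda\neq0$, is unambiguous. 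The one genuinely new ingredient beyond the $a\omega_1+b\omega_n$ case is Proposition~\ref{prop:B'mu}, whose proof already carries this weight combinatorics, so here it is only a matter of assembling the pieces.
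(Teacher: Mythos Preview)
Your proposal is correct, but the logical order is inverted relative to the paper. The paper's (very terse) proof runs the weight argument of Proposition~\ref{prop:B'mu} and Remark~\ref{rem:polynomialspartialordering} directly for a general $\lambda\in B'(\mu)+P_G^+(0)$: from the tensor decomposition one has $Q_\lambda^\mu=p_\lambda\Phi_\lambda^\mu+\sum_{\lambda'\precneqq\lambda}p_{\lambda'}\Phi_{\lambda'}^\mu$, and the leading $A$-weight comparison forces $p_\lambda\neq0$; this simultaneously yields $\Phi_\lambda^\mu\not\equiv0$, hence $\lambda\in P_G^+(\mu)$ and the reverse inclusion, and the triangular expansion by inversion. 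In other words, the expansion is proved first and the set identity is read off from it.

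You instead secure the reverse inclusion independently via the stability inequality of Remark~\ref{Bomega2induction}, and then obtain the expansion by composing Lemma~\ref{lem:sphericalpolynomials}, Proposition~\ref{prop:leadingtermofphii} and Proposition~\ref{prop:B'mu}; this is precisely the route the paper uses for the $a\omega_1+b\omega_n$ case in Theorem~\ref{thm:muaomega1bomegan}. Your path is more modular (it recycles Lemma~\ref{lem:sphericalpolynomials} rather than rerunning the weight argument), at the cost of having to establish the product structure $P_G^+(\mu)\cong B'(\mu)\times\mathbb N^n$ \emph{before} Lemma~\ref{lem:sphericalpolynomials} can be invoked with $B'(\mu)$ as bottom. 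The paper's route gets both assertions in one stroke but repeats the combinatorics of Remark~\ref{rem:polynomialspartialordering}. Either way the bookkeeping you flag in the last paragraph (that products $\psi^eQ_{\lambda''}^\mu$ land back in $\{Q_{\lambda'}^\mu:\lambda'\in P_G^+(\mu)\}$ and that the $Q$'s are linearly independent) is exactly what makes the argument go through.
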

\begin{proof}
By Lemma \ref{lem:PG+mudec}, we have $P_G^+(\mu)\subset B'(\mu)+P_G^+(0)$. The proof of \eqref{lambdaforaomega1+bomega2} is similar to Proposition \ref{prop:B'mu}, and then it leads to $P_G^+(\mu)\supset B'(\mu)+P_G^+(0)$. So this theorem is proved.
\end{proof}
\begin{Remark}
In van Pruijssen \cite{Prui-IMRN}, some examples for other symmetric pairs are given.
\end{Remark}

\section{Matrix-valued orthogonal polynomials}\label{sec:matrixweight}
The goal of this section is to give the matrix weight for the case of $\mu=a\omega_1+b\omega_n$ and $\mu=\omega_s+b\omega_n$. Note that for these two cases, we have $\sharp B(\mu)=\dim V_\mu^K$.

We put the diagonal entries of $\Phi_{\tau_i}^{\mu}(a_{\mathbf{t}})$ with $\tau_i\in B(\mu)$ and $i=1,2,\dots,\sharp B(\mu)$ in a row and all~$\Phi_{\tau_i}^{\mu}(a_{\mathbf{t}})$'s generate a matrix $\Phi(a_\mathbf{t})$. Similarly, we put the diagonal entries of $Q_{\tau_i}^{\mu}(a_\mathbf{t})$ in a row and all $Q_{\tau_i}^{\mu}(a_\mathbf{t})$'s generate a matrix $Q(a_\mathbf{t})$. The row of $\Phi(a_\mathbf{t})$, respectively $Q(a_\mathbf{t})$, is corresponding to $\tau_i$ with $i=1,2,\dots,\sharp B(\mu)$ and the column of $\Phi(a_\mathbf{t})$, respectively $Q(a_\mathbf{t})$, is corresponding to the weight vector in $V_\mu^K$. Explicitly,
\[\Phi^{ik}(a_\mathbf{t})=\big(\Phi_{\tau_i}^{\mu}(a_\mathbf{t})\big)^{k,k},\qquad i,k=1,2,\dots,\sharp B(\mu)\]
and
\[Q^{ik}(a_\mathbf{t})=\big(Q_{\tau_i}^{\mu}(a_\mathbf{t})\big)^{k,k},\qquad i,k=1,2,\dots,\sharp B(\mu).\]

\begin{Theorem}
Each spherical function can be written as
\[\Phi_\lambda^\mu(a_\mathbf{t})=\sum_{\nu\in B(\mu)} p_{\nu,\lambda}(\psi_1,\psi_2,\dots,\psi_n)Q_\nu^\mu(a_\mathbf{t}),\]
where $p_{\nu,\lambda}(\psi_1,\psi_2,\dots,\psi_n)$ is a polynomial in $\psi_j(\cos \mathbf{t})$'s.

Moreover, we have $\Phi(a_{\mathbf{t}})=UQ(a_{\mathbf{t}})$ where the entries of $U$ are the polynomials in $\psi_{j}(\cos \mathbf{t})$'s.
\end{Theorem}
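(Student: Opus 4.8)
The plan is to assemble the statement from the structural results already proved in Sections~\ref{exa:aomega1+bomegan} and~\ref{sec:omegasbomegan}, namely Theorem~\ref{thm:muaomega1bomegan} for $\mu=a\omega_1+b\omega_n$ and Theorem~\ref{thm:omegasbomegan} for $\mu=\omega_s+b\omega_n$ (throughout this section $B(\mu)$ stands for $B'(\mu)$ in the second case), together with the product decomposition $P_G^+(\mu)=B(\mu)+P_G^+(0)$ from Proposition~\ref{prop:P+Gmuofrankn}, respectively Lemma~\ref{lem:PG+mudec}. Fix $\lambda\in P_G^+(\mu)$. By Theorem~\ref{thm:muaomega1bomegan}, respectively Theorem~\ref{thm:omegasbomegan}, we may write
\[\Phi_\lambda^\mu(a_{\mathbf{t}})=\sum_{\lambda'\preccurlyeq\lambda,\,\lambda'\in P_G^+(\mu)}d_{\lambda'}\,Q_{\lambda'}^\mu(a_{\mathbf{t}}),\qquad d_{\lambda'}\in\mathbb{C}.\]
For each $\lambda'$ occurring here the product structure provides a unique bottom element $\nu'\in B(\mu)$ and unique $d_1',\dots,d_n'\in\mathbb{N}$ with $\lambda'=\nu'+\sum_{i=1}^n d_i'(\omega_i+\omega_{n+m-i})$, and by the definition of the approximate functions in Sections~\ref{exa:aomega1+bomegan} and~\ref{sec:omegasbomegan} one has $Q_{\lambda'}^\mu(a_{\mathbf{t}})=\psi_1^{d_1'}\psi_2^{d_2'}\cdots\psi_n^{d_n'}\,Q_{\nu'}^\mu(a_{\mathbf{t}})$.

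First I would substitute these expressions into the sum and collect the terms sharing the same bottom element, obtaining
\[\Phi_\lambda^\mu(a_{\mathbf{t}})=\sum_{\nu\in B(\mu)}\Bigg(\sum_{\substack{\lambda'\preccurlyeq\lambda,\,\lambda'\in P_G^+(\mu)\\ \nu'=\nu}}d_{\lambda'}\,\psi_1^{d_1'}\cdots\psi_n^{d_n'}\Bigg)Q_\nu^\mu(a_{\mathbf{t}}).\]
The inner bracket is by construction a $\mathbb{C}$-linear combination of monomials in $\psi_1(\cos\mathbf{t}),\dots,\psi_n(\cos\mathbf{t})$; this is the desired polynomial $p_{\nu,\lambda}(\psi_1,\dots,\psi_n)$, which proves the first assertion.

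For the matrix identity, recall that $\sharp B(\mu)=\dim V_\mu^K=:N$ in both cases and enumerate $B(\mu)=\{\tau_1,\dots,\tau_N\}$. Since $V_\mu^K|_M$ splits multiplicity free, each $\Phi_{\tau_i}^\mu(a_{\mathbf{t}})$ and each $Q_{\tau_j}^\mu(a_{\mathbf{t}})$ lies in $\operatorname{End}_M\big(V_\mu^K\big)$ and is therefore diagonal in the basis of $M$-weight vectors. Applying the first part with $\lambda=\tau_i\in B(\mu)$ and reading off the $k$-th diagonal entry — legitimate because $p_{\tau_j,\tau_i}(\psi_1,\dots,\psi_n)$ is a scalar-valued function and hence factors out of the matrix product — yields
\[\Phi^{ik}(a_{\mathbf{t}})=\sum_{j=1}^N p_{\tau_j,\tau_i}(\psi_1,\dots,\psi_n)\,Q^{jk}(a_{\mathbf{t}}),\qquad i,k=1,2,\dots,N,\]
so that $\Phi(a_{\mathbf{t}})=U\,Q(a_{\mathbf{t}})$ with $U_{ij}=p_{\tau_j,\tau_i}(\psi_1,\dots,\psi_n)$, whose entries are polynomials in the $\psi_j(\cos\mathbf{t})$'s.

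This is essentially a bookkeeping argument: no new analytic or representation-theoretic input is required beyond Sections~\ref{exa:aomega1+bomegan} and~\ref{sec:omegasbomegan}. The only point that genuinely needs care is that the regrouping by $\nu\in B(\mu)$ is both exhaustive and well defined, i.e.\ that every $\lambda'\in P_G^+(\mu)$ arising above splits uniquely as a bottom element plus a spherical part $\sum_{i=1}^n d_i'(\omega_i+\omega_{n+m-i})$ with all $d_i'\in\mathbb{N}$, and that the corresponding spherical factor of $Q_{\lambda'}^\mu$ is exactly the monomial $\psi_1^{d_1'}\cdots\psi_n^{d_n'}$; the former is the product structure $B(\mu)\times\mathbb{N}^n$ recorded in Proposition~\ref{prop:P+Gmuofrankn} and Lemma~\ref{lem:PG+mudec}, and the latter is built into the definition of $Q_\lambda^\mu$.
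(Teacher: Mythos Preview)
Your proof is correct and takes essentially the same approach as the paper, which simply cites Theorems~\ref{thm:muaomega1bomegan} and~\ref{thm:omegasbomegan}; you have spelled out the regrouping-by-bottom-element and the passage to the matrix identity that the paper leaves implicit.
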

\begin{Remark}
In this case, the diagonal of $\Phi_\lambda^\mu(a_\mathbf{t})$ can be viewed as a row vector-valued function, which can be written as $\widetilde{P}(\cos\mathbf{t})\Phi(a_\mathbf{t})$, and respectively $P(\cos\mathbf{t})Q(a_\mathbf{t})$. In this expression, $\widetilde{P}(\cos\mathbf{t})$, and respectively $P(\cos\mathbf{t})$, is a row vector-valued function and all the entries of $\widetilde{P}(\cos\mathbf{t})$ and $P(\cos\mathbf{t})$ are the polynomials in $\psi_{j}(\cos \mathbf{t})$'s.
\end{Remark}
\begin{proof}
 This theorem can be proved by Theorems \ref{thm:muaomega1bomegan} and~\ref{thm:omegasbomegan}.
\end{proof}

\begin{Remark}
In \cite{KoelvPR-JFA}, it mentions that the spherical function of $\nu+\lambda_{\rm sph}=\lambda\in P_G^+(\mu)$, where $|\lambda_{\rm sph}|\geq1$ can be written as three-recurrence relation
\begin{gather*}
\phi_{i}\Phi_{\lambda}^{\mu}=\sum_{\substack{\nu'\in B(\mu),\, \lambda_{\rm sph}'\in P_{G}^{+}(0)\\ |\lambda'|= |\lambda_{\rm sph}|-1}} d_{\nu'+\lambda'}\Phi_{\nu'+\lambda'}^{\mu}\\
\hphantom{\phi_{i}\Phi_{\lambda}^{\mu}=}{}
+\sum_{\substack{\nu'\in B(\mu),\, \lambda_{\rm sph}'\in P_{G}^{+}(0)\\ |\lambda'|= |\lambda_{\rm sph}|}} d_{\nu'+\lambda'}\Phi_{\nu'+\lambda'}^{\mu}
 +\sum_{\substack{\nu'\in B(\mu),\, \lambda_{\rm sph}'\in P_{G}^{+}(0)\\ |\lambda'|= |\lambda_{\rm sph}|+1}} d_{\nu'+\lambda'}\Phi_{\nu'+\lambda'}^{\mu}
\end{gather*}
if the following three conditions are satisfied
\begin{itemize}\itemsep=0pt
 \item[(1)] $(G,K,\mu)$ is a multiplicity free triple.
 \item[(2)] There exists $B(\mu)\subset P_{G}^{+}(\mu)$ so that for any $\lambda\in P_{G}^{+}(\mu)$, there exists unique elements $\lambda_{\rm sph}\in P_{G}^{+}(0)$ and $\nu\in B(\mu)$ such that
\[\lambda=\lambda_{\rm sph}+\nu.\]
Moreover, there is a bijection from the elements in $B(\mu)$ to the irreducible $M$-modules in~$V_{\mu}^{K}|_{M}$.
 \item[(3)] For any generator $\lambda_r$ of $P_G^+(0)$ and any $\nu\in B(\mu)$, we have that if the weight $\eta\in P(\lambda_r)$ satisfies $\nu+\eta\in P_G^+(\mu)$ and hence
\[\nu+\eta=\nu'+\lambda_{\rm sph}\]
for $\nu'\in B(\mu)$ and $\lambda_{\rm sph}\in P_G^+(0)$, then $|\lambda_{\rm sph}|\leq1$.
\end{itemize}

For the case of $\mu=a\omega_1+b\omega_n$, the three conditions are satisfied and moreover, the entries of~$U$ are all constants by Lemma \ref{lem:phiaomega1+bomegan}.

For the case of $\mu=\omega_s+b\omega_n$, the third condition is not satisfied. For instance, we define $(G,K,\mu)=({\rm SU}(9),{\rm S}({\rm U}(4)\times {\rm U}(5)),\omega_2)$. Let $\nu=\omega_1+\omega_3+\omega_7\in B(\mu)$ and $\eta=\omega_1+\omega_2-\omega_3-\omega_7+2\omega_8\in P(\omega_1+\omega_8)$, then we have
\[\nu+\eta=2\omega_1+\omega_2+2\omega_8=\omega_2+2\lambda_1.\]
So this situation contradicts the third condition.
\end{Remark}

\begin{Remark}[differential operator] For $\Phi_{\lambda}^{\mu}=PQ$, the radial part $R$ of the Casimir operator can be rewritten as
\begin{align*}
D(P)={}&-\frac{1}{2}\sum_{k=1}^{n}\frac{\partial^{2} P}{\partial t_{k}^{2}}
-\sum_{k=1}^{n}\frac{\partial P}{\partial t_{k}}\frac{\partial Q}{\partial t_{k}}Q^{-1}-(m-n)\sum_{k=1}^{n}\frac{\partial P}{\partial t_{k}}\frac{\cos t_{k}}{\sin t_{k}}\\
&\quad-\sum_{1\leq j<k\leq n} \frac{\cos(t_{j}-t_{k})\sin(t_{j}+t_{k})}
{\cos^{2}t_{k}-\cos^{2}t_{j}}\left(\frac{\partial}{\partial t_{j}}-\frac{\partial}{\partial t_{k}}\right)P\\
&\quad-\sum_{1\leq j<k\leq n} \frac{\cos(t_{j}+t_{k})\sin(t_{j}-t_{k})}
{\cos^{2}t_{k}-\cos^{2}t_{j}}\left(\frac{\partial}{\partial t_{j}}+\frac{\partial}{\partial t_{k}}\right)P
-\sum_{k=1}^{n}\frac{\cos(2t_{k})}{\sin(2t_{k})}
\frac{\partial P}{\partial t_{k}}+P L,
\end{align*}
where the entries of $L$ are polynomials in $\psi_i$'s.

Using the similar proof with \cite[Lemma 3.9]{KoelvPR-JFA}, we have
\[\sum_{j=1}^{n}\frac{\partial Q}{\partial t_{j}}\frac{\partial \psi_{i}}{\partial t_{j}}=C_{i}(\psi_{1},\psi_{2},\dots,\psi_{n})Q,\]
where all entries of $C(\psi_{1},\psi_{2},\dots,\psi_{n})$ are polynomials in $\psi_{i}$'s.
\end{Remark}

The orthogonality relation in Remark \ref{rem:ortho} can be written as
\begin{align*}
\langle\Phi_{\lambda}^{\mu},\Phi_{\lambda'}^{\mu}\rangle&=c_1\int_{A}\operatorname{Tr}(\Phi_{\lambda}^{\mu}(a_\mathbf{t})(\Phi_{\lambda'}^{\mu} (a_\mathbf{t}))^{\ast})|\delta(a_\mathbf{t})|\, {\rm d} a_\mathbf{t}\\
&=c_{1}\int_A P_{1}(\cos\mathbf{t})Q(a_\mathbf{t})(Q(a_\mathbf{t}))^\ast (P_{2}(\cos\mathbf{t}))^{\ast}|\delta(a_\mathbf{t})|\, {\rm d} a_\mathbf{t},
\end{align*}
then $Q(a_\mathbf{t})(Q(a_\mathbf{t}))^\ast$ is the matrix weight function. Let
\[S(\cos\mathbf{t})=Q(a_\mathbf{t})(Q(a_\mathbf{t}))^\ast\]
and the $(i,j)$-th entry of $S(\cos\mathbf{t})$ is
\[S^{i,j}(\cos\mathbf{t})=\underset{k=1}{\stackrel{\dim V_\mu^K}\sum}Q^{i,k}(a_\mathbf{t})\overline{Q^{j,k}(a_\mathbf{t})},\]
where $i,j=1,2,\dots,\sharp B(\mu)=\dim V_\mu^K$. Note that $S^{i,j}(\cos\mathbf{t})$ is a polynomial in $\cos^2 t_k$($k=1,2,\dots,n$). So we have
\begin{align*}
\langle\Phi_{\lambda}^{\mu},\Phi_{\lambda'}^{\mu}\rangle
&{}=c_{1}\int_{-\pi}^{\pi}\cdots\int_{-\pi}^{\pi}P_{1}(\cos\mathbf{t})S(\cos\mathbf{t}) (P_{2}(\cos\mathbf{t}))^{\ast}|\delta(a_\mathbf{t})|{\rm d}t_{1}{\rm d}t_{2}\cdots {\rm d}t_{n}\\
&{}=4^{n}c_{1}\int_{0}^{\frac{\pi}{2}}\cdots\int_{0}^{\frac{\pi}{2}}P_{1}(\cos\mathbf{t})S(\cos\mathbf{t}) (P_{2}(\cos\mathbf{t}))^{\ast}
\prod_{i=1}^n \bigl(\sin^{2(m-n)}t_{i}\sin 2t_{i}\bigr)\\
&\quad{}\times\prod_{1\leq i<j\leq n} \bigl(\sin^{2}(t_{i}+t_{j})\sin^{2}(t_{i}-t_{j})\bigr){\rm d}t_{1}{\rm d}t_{2}\cdots {\rm d}t_{n}\\
&{}=4^{n}c_{1}\int_{0}^{1}\cdots\int_{0}^{1}P_{1}(\mathbf{l})S(\mathbf{l}) (P_{2}(\mathbf{l}))^{\ast}
\prod_{i=1}^n (1-l_{i})^{m-n}\prod_{1\leq i<j\leq n} (l_{i}-l_{j})^{2}{\rm d}l_{1}{\rm d}l_{2}\cdots {\rm d}l_{n}\\
&{}=\frac{\bigl(\dim V_{\mu}^{K}\bigr)^{2}}{\dim V_{\lambda}^{G}}\delta_{\lambda\lambda'},
\end{align*}
where we let $l_i=\cos^2 t_i$ and $\mathbf{l}=(l_1,l_2,\dots,l_n)$. In this case, we let
\[c_{1}=\Big(\int_{-\pi}^{\pi}\cdots\int_{-\pi}^{\pi}|\delta(a_\mathbf{t})|\,{\rm d}t_{1}{\rm d}t_{2}\cdots {\rm d}t_{n}\Big)^{-1},\]
and $c_{1}^{-1}$ is the Selberg integration $S_{n}(\alpha,\beta,\gamma)$ with $\alpha=1$, $\beta=m-n+1$ and $\gamma=1$. Moreover, we have
\[c_1=\frac{1}{4^n}\prod_{j=0}^{n-1} \frac{\Gamma(m+1+j)}{\Gamma(1+j)\Gamma(m-n+1+j)\Gamma(j+2)}.\]

\subsection[Example: mu=omega\_1+b omega\_n]{Example: $\boldsymbol{\mu=\omega_1+b\omega_n}$}
In this case, we have $\psi_{i}^{(k)}(\cos \mathbf{t})=\psi_{i}^{(\{k\})}(\cos \mathbf{t})$ as defined in Appendix \ref{app:casimirop}, and
\[Q^{i,k}(a_\mathbf{t})=\cos t_k\cos^b t_\mathrm{N}\psi_{i}^{(k)}(\cos \mathbf{t})
 =\cos t_k\cos^b t_\mathrm{N}\sum_{\substack{|I|=i\\I\subset \mathrm{N}\backslash \{k\}}} \cos^{2}t_{I},\qquad i=0,1,\dots,n-1.\]
Then
\[S^{i,j}(\cos \mathbf{t})=\sum_{k=1}^{n}Q^{i,k}(a_\mathbf{t})\overline{Q^{j,k}(a_\mathbf{t})}=\psi_n^b\sum_{k=1}^{n}\bigl(\cos^2 t_k\psi_{i}^{(k)}(\cos\textbf{t})\psi_{j}(\cos\textbf{t})^{(k)}\bigr).\]

\begin{Lemma}
The matrix weight $S$ is indecomposable, i.e.,
\begin{gather}
\mathbb{C}I=\{A\in\operatorname{End}(\mathbb{C}^{n})\mid AS(\cos\textbf{t})=S(\cos\textbf{t})A\},\nonumber\\
\mathbb{R}I=\{A\in\operatorname{End}(\mathbb{C}^{n})\mid AS(\cos\textbf{t})=S(\cos\textbf{t})A^{\ast}\}.\label{lem:indecomposable}
\end{gather}
\end{Lemma}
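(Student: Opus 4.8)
The plan is to prove the two equalities in \eqref{lem:indecomposable} by exploiting the explicit diagonal structure of the approximate functions $Q^\mu_{\nu_i}(a_\mathbf{t})$ recorded in Lemma~\ref{Qomegas+bomeganentry}, together with the fact that the total degrees of the entries $S^{i,j}(\cos\mathbf{t})$ distinguish the rows and columns. For $\mu=\omega_1+b\omega_n$ the matrix $Q(a_\mathbf{t})$ has $(i,k)$-entry $\cos t_k\cos^b t_\mathrm{N}\,\psi_i^{(k)}(\cos\mathbf{t})$, so $S(\cos\mathbf{t})=Q(a_\mathbf{t})Q(a_\mathbf{t})^{\ast}$ has entries $S^{i,j}(\cos\mathbf{t})=\psi_n^b\sum_{k=1}^n\cos^2 t_k\,\psi_i^{(k)}\psi_j^{(k)}$, which is a genuine polynomial in the $\cos^2 t_k$. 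The key point is that $S^{i,j}$ has total degree $2(i+j)$ in the variables $\cos t_k$ (after stripping the common factor $\psi_n^b$ of degree $2bn$), and this degree count, read off the diagonal, will force any commuting matrix to be diagonal; then a further evaluation argument will force it to be scalar.

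First I would suppose $A=(a_{ij})\in\operatorname{End}(\mathbb{C}^n)$ satisfies $AS(\cos\mathbf{t})=S(\cos\mathbf{t})A$ and compare the $(i,j)$-entries of both sides: $\sum_k a_{ik}S^{k,j}(\cos\mathbf{t})=\sum_k S^{i,k}(\cos\mathbf{t})a_{kj}$ as an identity of polynomials. Since $S^{k,j}$ has total degree $2(k+j)$ (in the $\cos t_k$), looking at the top-degree part on each side shows that the leading term on the left comes only from $k=n$ and on the right only from $k=n$; more usefully, for $i\neq j$ one isolates the extreme-degree components to conclude successively that the off-diagonal entries $a_{ij}$ vanish. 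Concretely, take $i<j$: the highest total degree appearing on the right-hand side is $2(i+n)$ with coefficient $a_{nj}$ times the leading term of $S^{i,n}$, while on the left the term $a_{ik}S^{k,j}$ of the same degree $2(i+n)$ forces $k=i+n-j<n$, and iterating this comparison (peeling off degrees from the top) yields $a_{ij}=0$ for all $i\ne j$; the symmetric argument with $i>j$ uses the Hermiticity $S^{i,j}=\overline{S^{j,i}}$ and the fact that $S^{j,i}$, being real on the relevant domain, has the same degree structure. Hence $A$ is diagonal.

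Next, with $A=\operatorname{diag}(a_1,\dots,a_n)$ the commutation relation reduces to $(a_i-a_j)S^{i,j}(\cos\mathbf{t})=0$ for all $i,j$, so it suffices to show $S^{i,j}\not\equiv0$ for all $i,j$ — equivalently that $S(\cos\mathbf{t})$ has no identically-zero entry. This follows because $S^{i,j}(\cos\mathbf{t})=\psi_n^b\sum_{k=1}^n\cos^2 t_k\,\psi_i^{(k)}\psi_j^{(k)}$ is a sum of products of elementary symmetric polynomials with positive integer coefficients, hence takes a strictly positive value at, say, $\mathbf{t}=(0,\dots,0)$ where every $\cos t_k=1$; thus $S^{i,j}\not\equiv0$ and $a_i=a_j$ for all $i,j$, i.e.\ $A\in\mathbb{C}I$. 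This proves the first equality. For the second, suppose $AS(\cos\mathbf{t})=S(\cos\mathbf{t})A^{\ast}$; writing $A=B+\mathrm{i}C$ with $B,C$ Hermitian and using that $S(\cos\mathbf{t})$ is a real symmetric (in fact positive definite) matrix of polynomials, the real and imaginary parts give $BS=SB$ and $CS=-SC$. The first already forces $B\in\mathbb{R}I$ by the case just treated; for the second, the degree argument again forces $C$ diagonal and then $(c_i+c_j)S^{i,j}=0$ with $S^{i,j}\not\equiv0$ gives $c_i=-c_j$ for all $i,j$, which for $n\ge1$ is possible only if $C=0$ (taking $i=j$ gives $c_i=0$). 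Hence $A=B\in\mathbb{R}I$, as claimed.

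**Main obstacle.** The routine part is the evaluation at $\mathbf{t}=0$ showing $S^{i,j}\not\equiv0$; the delicate part is the degree-comparison step that forces a commuting matrix to be diagonal. Here one must be careful that the entries $S^{i,j}$ are not homogeneous — they are polynomials in $\cos^2 t_k$ with top-degree part $2(i+j)$ after factoring out $\psi_n^b$ — so the "peeling off top degrees" induction has to be organized so that the mixed lower-order terms do not interfere; I expect the cleanest route is to grade by total degree, note that the leading homogeneous component of $S^{i,j}$ (namely $\psi_n^b$ times the top-degree part of $\sum_k\cos^2 t_k\,\psi_i^{(k)}\psi_j^{(k)}$) is itself a nonzero symmetric polynomial, and run the argument on these leading components first to kill the off-diagonal entries in one stroke, rather than term-by-term. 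If the leading components alone are not enough to separate rows from columns, one falls back on specializing the $t_k$ to generic values and using that the Vandermonde-type matrix $\big(\psi_i^{(k)}(\cos\mathbf{t})\big)_{i,k}$ is invertible for generic $\mathbf{t}$, which makes $Q(a_\mathbf{t})$ of full rank and hence $S(\cos\mathbf{t})$ positive definite, from which indecomposability of the weight follows by the standard argument.
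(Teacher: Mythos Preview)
Your overall strategy---comparing total degrees of the entries $S^{i,j}$ to force a commuting matrix to be diagonal, then using $S^{i,j}\not\equiv 0$ to force it to be scalar---is exactly the paper's approach (which in turn cites \cite[Proposition~5.1]{KoelLiu}). Two remarks are in order.

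First, the situation is cleaner than you fear in your ``main obstacle'' paragraph: each $\psi_i^{(k)}(\cos\mathbf{t})$ is already a \emph{homogeneous} polynomial of degree $2i$ in the $\cos t_l$, so $S^{i,j}=\psi_n^b\sum_k\cos^2 t_k\,\psi_i^{(k)}\psi_j^{(k)}$ is homogeneous of degree $2bn+2+2i+2j$. Thus the commutation identity $\sum_k a_{ik}S^{k,j}=\sum_k S^{i,k}a_{kj}$ decomposes cleanly by degree, and the ``peeling'' induction goes through without interference from lower-order terms. (Your stated degree $2(i+j)$ after stripping $\psi_n^b$ is off by $2$, but this is harmless.)

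Second, there is a genuine gap in your treatment of the second equality $AS=SA^{\ast}$. Writing $A=B+\mathrm{i}C$ with $B,C$ Hermitian, the relation becomes $(BS-SB)+\mathrm{i}(CS+SC)=0$; but both $BS-SB$ and $\mathrm{i}(CS+SC)$ are anti-Hermitian, so they do \emph{not} separate into independent ``real and imaginary parts'' as you claim, even using that $S$ is real. The fix is simply to run the same degree comparison directly on $AS=SA^{\ast}$: the $(i,j)$-entry reads $\sum_k a_{ik}S^{k,j}=\sum_k S^{i,k}\overline{a_{jk}}$, and since the degree of $S^{i,k}$ still depends only on $i+k$, the identical induction forces $A$ diagonal. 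Then $(a_i-\overline{a_j})S^{i,j}=0$ with $S^{i,j}\not\equiv0$ gives $a_i=\overline{a_j}$ for all $i,j$; taking $i=j$ shows each $a_i\in\mathbb{R}$, and then $a_i=a_j$, so $A\in\mathbb{R}I$.

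Finally, your fallback---``$S(\cos\mathbf{t})$ positive definite for generic $\mathbf{t}$, hence indecomposable by the standard argument''---is not valid. Pointwise positive definiteness does not constrain the commutant at all (e.g.\ a diagonal weight with distinct positive entries commutes with every diagonal matrix). Indecomposability genuinely requires the off-diagonal information $S^{i,j}\not\equiv0$, which is precisely what your evaluation at $\mathbf{t}=0$ provides.
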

\begin{proof}
Recall that the total degree of $S^{ij}(\cos\mathbf{t})$ is $2i+2j+2b+2$, the way to prove this lemma is similar to the proof of \cite[Proposition~5.1]{KoelLiu}, for which we compare the total degree of the entries between the left-hand and right-hand sides in \eqref{lem:indecomposable}.
\end{proof}

\begin{Proposition}
We have
\[S^{ij}(\cos\textbf{t})=-\psi_{n}^{b}(\cos\textbf{t})\sum_{k=-1}^{i-1} (2k+1-i-j)\psi_{k+1}(\cos\textbf{t})\psi_{i+j-k}(\cos\textbf{t})\]
for $i+j\leq n-1$ and
\[S^{ij}(\cos\textbf{t})
=-\psi_{n}^{b}(\cos\textbf{t})\sum_{k=-1}^{n-2-j} (i+j-2n+3+2k)\psi_{i+j-n+2+k}(\cos\textbf{t})\psi_{n-1-k}(\cos\textbf{t})\]
for $i+j>n-1$.
\end{Proposition}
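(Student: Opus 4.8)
The plan is to reduce the statement to a purely symmetric-function identity and to prove it by a generating-function computation. Writing $x_k=\cos^2 t_k$ and noting that $(\cos^b t_\mathrm{N})^2=\psi_n^b$ because the $t_k$ are real, the formula $S^{ij}(\cos\mathbf{t})=\psi_n^b\sum_{k=1}^n\cos^2 t_k\,\psi_i^{(k)}\psi_j^{(k)}$ shows that it suffices to evaluate $c_{ij}:=\sum_{k=1}^n x_k e_i^{(k)}e_j^{(k)}$, where $e_i=\psi_i$ is the $i$-th elementary symmetric polynomial in $x_1,\dots,x_n$ and $e_i^{(k)}=\psi_i^{(k)}$ is the analogous polynomial with $x_k$ deleted, with the convention $e_r=0$ for $r<0$ or $r>n$; note that $c_{ij}=c_{ji}$. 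Both displayed identities will then be obtained by re-indexing $c_{ij}=S^{ij}/\psi_n^b$.

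First I would introduce the generating polynomial $E(t)=\prod_{l=1}^n(1+x_lt)=\sum_i e_it^i$, so that $\sum_i e_i^{(k)}t^i=E(t)/(1+x_kt)=\prod_{l\neq k}(1+x_lt)$. Then $\sum_{i,j\geq0}c_{ij}t^is^j=E(t)E(s)\sum_{k=1}^n x_k(1+x_kt)^{-1}(1+x_ks)^{-1}$, and, multiplying through by $(s-t)$, using the partial-fraction identity $(s-t)x_k(1+x_kt)^{-1}(1+x_ks)^{-1}=(1+x_kt)^{-1}-(1+x_ks)^{-1}$ and the fact that $\sum_k E(t)/(1+x_kt)=\sum_i(n-i)e_it^i$ (a consequence of $\sum_k e_i^{(k)}=(n-i)e_i$), I obtain, entirely within the polynomial ring,
\[(s-t)\sum_{i,j}c_{ij}t^is^j=E(t)G(s)-E(s)G(t)=\sum_{a,b}(b-a)e_ae_bt^as^b,\]
where $G(t):=tE'(t)=\sum_i ie_it^i$ and the first equality uses $\sum_i(n-i)e_it^i=nE(t)-G(t)$. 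Comparing the coefficients of $t^is^{j+1}$ yields the recursion $c_{i,j}-c_{i-1,j+1}=(j+1-i)e_ie_{j+1}$, and telescoping it from $(i,j)$ down to $(0,i+j)$, where the out-of-range term drops out, gives the closed form $c_{ij}=\sum_{l=0}^i(i+j+1-2l)\,e_l\,e_{i+j+1-l}$, valid for all $i,j\geq0$ with the vanishing convention on the $e_r$.

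For $i+j\leq n-1$ no factor $e_r$ in this sum is forced to vanish, and the substitution $l=k+1$ rewrites it as $-\sum_{k=-1}^{i-1}(2k+1-i-j)\psi_{k+1}\psi_{i+j-k}$; multiplying by $\psi_n^b$ gives the first formula. For $i+j>n-1$ the decisive observation is that the summand $(i+j+1-2l)e_le_{i+j+1-l}$ is antisymmetric under the involution $l\mapsto i+j+1-l$ of $\{0,1,\dots,i+j+1\}$, so the full sum over that range vanishes and hence $c_{ij}=-\sum_{l=i+1}^{i+j+1}(i+j+1-2l)e_le_{i+j+1-l}$; here the terms with $i+j+1-l>n$ drop out and, since $j\leq n-1$, the range collapses to $l=i+1,\dots,n$. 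Using $c_{ij}=c_{ji}$ to replace the lower limit $i+1$ by $j+1$ and re-indexing $l=n-1-k$ then matches the second formula after multiplying by $\psi_n^b$. I expect the bookkeeping in this final step — combining the antisymmetry identity with the symmetry $c_{ij}=c_{ji}$ and keeping track of which $e_r$ vanish — to be the only genuinely delicate point, the generating-function computation itself being short and mechanical.
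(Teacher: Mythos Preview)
Your argument is correct. Both your proof and the paper's rest on the same two-term recursion
\[
S^{ij}-S^{i+1,j-1}=(i-j+1)\,\psi_n^b\,\psi_{i+1}\psi_j,
\]
which you obtain by a generating-function computation (the identity $(s-t)\sum c_{ij}t^is^j=E(t)G(s)-E(s)G(t)$) whereas the paper derives it directly from the elementary identities \eqref{symmetripolynomialrelation} and \eqref{symmetripolynomialrelation2}. The telescoping step is the same in spirit; the paper anchors it at the two explicit boundary values $S^{0j}=(j+1)\psi_n^b\psi_{j+1}$ and $S^{i,n-1}=(n-i)\psi_n^{b+1}\psi_i$, while you instead exploit the antisymmetry of $(i+j+1-2l)e_le_{i+j+1-l}$ under $l\mapsto i+j+1-l$ together with $c_{ij}=c_{ji}$ to convert the range $0\leq l\leq i$ into $j+1\leq l\leq n$. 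Your route is slightly slicker for the case $i+j>n-1$, since it avoids a separate boundary computation; the paper's route has the advantage of staying entirely within the elementary identities already set up in the appendix.
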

\begin{proof}
Since $S(\cos\textbf{t})$ is a symmetric matrix, we only need to consider $S^{ij}(\cos\textbf{t})$ for which $i\leq j$. For $i=0$, by \eqref{symmetripolynomialrelation2} we have
\[S^{0j}(\cos\textbf{t})
=\psi_{n}^{b}(\cos\textbf{t})\sum_{k=1}^{n}\bigl(\cos^{2}t_{k}\psi_{j}^{(k)}(\cos\textbf{t})\bigr)
=\psi_{n}^{b}(\cos\textbf{t})(j+1)\psi_{j+1}(\cos\textbf{t})\]
and for $j=n-1$, we have, by \eqref{symmetripolynomialrelation2} and $\cos^2 t_k\psi_{n-1}^{(k)}(\cos\mathbf{t})=\psi_n(\cos\mathbf{t})$,
\[S^{i,n-1}(\cos\textbf{t})=\psi_{n}^{b}(\cos\textbf{t})\sum_{k=1}^{n}\bigl(\psi_{i}^{(k)}(\cos\textbf{t})\psi_{n}(\cos\textbf{t})\bigr)
=(n-i)\psi_{i}(\cos\textbf{t})\psi_{n}^{b+1}(\cos\textbf{t}).\]
Now we calculate other entries. We use \eqref{symmetripolynomialrelation} and \eqref{symmetripolynomialrelation2} such that
\begin{align*}
S^{ij}(\cos\textbf{t})&=\psi_{n}^{b}(\cos\textbf{t})\sum_{k=1}^{n}\bigl(\cos^{2}t_{k}\psi_{i}^{(k)}(\cos\textbf{t})\psi_{j}^{(k)}(\cos\textbf{t})\bigr)\\
&=\psi_{n}^{b}(\cos\textbf{t})\sum_{k=1}^{n}\bigl(\psi_{i+1}(\cos\textbf{t})-\psi_{i+1}^{(k)}(\cos\textbf{t})\bigr)
\bigl(\psi_{j}(\cos\textbf{t})-\cos^{2}t_{k}\psi_{j-1}^{(k)}(\cos\textbf{t})\bigr)\\
&=(i-j+1)\psi_{n}^{b}(\cos\textbf{t})\psi_{i+1}(\cos\textbf{t})\psi_{j}(\cos\textbf{t})+S^{i+1,j-1}(\cos\textbf{t}).
\end{align*}
Then we can calculate all the entries of $S$ by induction and this proposition is proved.
\end{proof}

We have
\begin{Lemma}
\[\det(S)=\psi_n^{b+1}(\cos \mathbf{t})\prod_{1\leq i<j\leq n} \bigl(\cos^2 t_j-\cos^2 t_i\bigr)^2.\]
\end{Lemma}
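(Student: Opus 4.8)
The plan is to exploit the factorisation $S(\cos\mathbf{t})=Q(a_\mathbf{t})\bigl(Q(a_\mathbf{t})\bigr)^{\ast}$ recorded just above. Here $Q(a_\mathbf{t})$ is a square matrix, because $\sharp B(\mu)=\dim V_\mu^K=n$, and all of its entries are real (they are polynomials in the $\cos t_j$). Hence $\det S=\det Q\cdot\overline{\det Q}=(\det Q)^{2}$, and it is enough to compute $\det Q$.

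To do that I would insert the entries $Q^{i,k}(a_\mathbf{t})=\cos t_k\,\cos^{b} t_\mathrm{N}\,\psi^{(k)}_i(\cos\mathbf{t})$ coming from Lemma~\ref{Qomegas+bomeganentry}. The scalar $\cos^{b} t_\mathrm{N}$ is common to every entry and $\cos t_k$ is common to the $k$-th column, so pulling these out gives
\[
\det Q=\bigl(\cos^{b} t_\mathrm{N}\bigr)^{n}\Bigl(\prod_{k=1}^{n}\cos t_k\Bigr)\det M,\qquad M=\bigl[\,\psi^{(k)}_i(\cos\mathbf{t})\,\bigr]_{0\le i\le n-1,\;1\le k\le n},
\]
where $\psi^{(k)}_i$ is the $i$-th elementary symmetric polynomial in $l_1,\dots,l_n$ with $l_k$ deleted and $l_j:=\cos^{2}t_j$. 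Thus everything reduces to the ``one-variable-deleted'' elementary-symmetric determinant $\det M$.

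For $\det M$ I would use the identity $\prod_{j\neq k}(X-l_j)=\sum_{i=0}^{n-1}(-1)^{\,n-1-i}\psi^{(k)}_{n-1-i}(\cos\mathbf{t})\,X^{i}$. Evaluating the left-hand side at $X=l_p$ gives $0$ for $p\neq k$ and $\prod_{j\neq k}(l_k-l_j)$ for $p=k$, which in matrix form is $A^{\mathsf T}W=\operatorname{diag}\bigl(\prod_{j\neq k}(l_k-l_j)\bigr)$ with $W=\bigl[\,l_k^{\,i}\,\bigr]$ the Vandermonde matrix and $A$ the coefficient matrix of the identity, which is $M$ up to reversing the rows and an overall sign. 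Taking determinants and dividing by $\det W=\pm\prod_{i<j}(l_i-l_j)$ yields $\det M=\pm\prod_{1\le i<j\le n}(l_i-l_j)$ (one may instead prove this by column reduction: subtract the first column from the others, factor $(l_1-l_k)$ out of column $k$, and induct on $n$, which is the route suggested by the $n\le 3$ cases). Squaring and collecting the cosine prefactors---using $\bigl(\cos^{b} t_\mathrm{N}\bigr)^{2}=\psi_n^{\,b}$ and $\bigl(\prod_k\cos t_k\bigr)^{2}=\psi_n$---then turns $\det S=(\det Q)^{2}$ into the asserted product of a power of $\psi_n$ with $\prod_{1\le i<j\le n}(\cos^{2}t_j-\cos^{2}t_i)^{2}$. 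Alternatively one could start from the explicit expression for $S^{i,j}$ in the preceding Proposition and reduce the determinant directly, but the factorisation $S=QQ^{\ast}$ makes the Vandermonde structure immediate. The only genuinely non-routine step is the deleted-variable determinant $\det M$; the factorisations and the final squaring are mechanical.
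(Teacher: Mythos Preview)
Your strategy is exactly the paper's: reduce $\det S$ to $(\det Q)^{2}$, strip the common factors from $\det Q$, and evaluate the residual $\det M=\det\bigl[\psi_{i}^{(k)}\bigr]$. For $\det M$ the paper uses the recurrence $\psi_{i}^{(H\cup\{k\})}-\psi_{i}^{(H\cup\{1\})}=(\cos^{2}t_{1}-\cos^{2}t_{k})\,\psi_{i-1}^{(H\cup\{1,k\})}$ and column-reduces inductively, which is precisely the alternative you sketch; your Vandermonde factorisation $A^{\mathsf T}W=\operatorname{diag}\bigl(\prod_{j\neq k}(l_{k}-l_{j})\bigr)$ is a neat equivalent and yields the same $\det M=\pm\prod_{i<j}(\cos^{2}t_{i}-\cos^{2}t_{j})$.

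There is, however, one genuine arithmetic slip, and it is shared by the paper's own proof. You correctly pull the common scalar $\cos^{b}t_{\mathrm N}$ out of the $n\times n$ determinant as $(\cos^{b}t_{\mathrm N})^{n}$ (the paper erroneously extracts it only once). But then squaring gives $\bigl[(\cos^{b}t_{\mathrm N})^{n}\bigr]^{2}=\psi_{n}^{\,bn}$, not $\psi_{n}^{\,b}$; together with $\bigl(\prod_{k}\cos t_{k}\bigr)^{2}=\psi_{n}$ the prefactor becomes $\psi_{n}^{\,bn+1}$, not the stated $\psi_{n}^{\,b+1}$. The same conclusion follows directly from $S^{i,j}=\psi_{n}^{\,b}\sum_{k}\cos^{2}t_{k}\,\psi_{i}^{(k)}\psi_{j}^{(k)}$, since the overall scalar $\psi_{n}^{\,b}$ contributes $(\psi_{n}^{\,b})^{n}=\psi_{n}^{\,bn}$ to the determinant of an $n\times n$ matrix. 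Your hedge ``a power of~$\psi_{n}$'' hides the discrepancy; the exponent $b+1$ in the Lemma is only correct for $b=0$ or $n=1$, and the honest output of your own computation is $\det S=\psi_{n}^{\,bn+1}\prod_{i<j}(\cos^{2}t_{j}-\cos^{2}t_{i})^{2}$.
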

\begin{proof}
We only need to calculate $\det(Q)$ since $S=QQ^\ast$ and we have
\begin{align*}
 \det(Q)&=\cos^b t_\mathrm{N}
 \left|
 \begin{matrix}
 \cos t_{1}&\cos t_{2}&\cdots&\cos t_{n}\\
 \cos t_{1}\psi_{1}^{(1)}(\cos \mathbf{t})&\cos t_{2}\psi_{1}^{(2)}(\cos \mathbf{t})&\cdots& \cos t_{n}\psi_{1}^{(n)}(\cos \mathbf{t})\\
 \vdots&\vdots&\ddots&\vdots\\
 \cos t_{1}\psi_{n-1}^{(1)}(\cos \mathbf{t})&\cos t_{2}\psi_{n-1}^{(2)}(\cos \mathbf{t})& \cdots&\cos t_{n}\psi_{n-1}^{(n)}(\cos \mathbf{t})\\
 \end{matrix}
\right|\\
&=\cos^{(b+1)} t_{\mathrm{N}}\left|
 \begin{matrix}
 1&1&\cdots&1\\
 \psi_{1}^{(1)}(\cos \mathbf{t})&\psi_{1}^{(2)}(\cos \mathbf{t})&\cdots&\psi_{1}^{(n)}(\cos \mathbf{t})\\
 \vdots&\vdots&\ddots&\vdots\\
 \psi_{n-1}^{(1)}(\cos \mathbf{t})&\psi_{n-1}^{(2)}(\cos \mathbf{t})& \cdots&\psi_{n-1}^{(n)}(\cos \mathbf{t})\\
 \end{matrix}
\right|.
\end{align*}
Let $H\subset\mathrm{N}$, then by \eqref{symmetripolynomialrelation} it leads to
\begin{gather}
 \psi_{i}^{(H\cup\{k\})}(\cos \mathbf{t})-\psi_{i}^{(H\cup\{1\})}(\cos \mathbf{t})=\bigl(\psi_{i}^{(H\cup\{1,k\})}(\cos \mathbf{t})+\cos^2 t_1\psi_{i-1}^{(H\cup \{1,k\})}(\cos \mathbf{t})\bigr)\nonumber\\
 \qquad\quad{}-\bigl(\psi_{i}^{(H\cup\{1,k\})}(\cos \mathbf{t})+\cos^2 t_k\psi_{i-1}^{(H\cup \{1,k\})}(\cos \mathbf{t})\bigr)\nonumber\\
\qquad {} =\bigl(\cos^2 t_1-\cos^2 t_k\bigr)\psi_{i-1}^{(H\cup \{1,k\})}(\cos \mathbf{t}).\label{detQ}
\end{gather}
So
\begin{gather}
\det(Q)=\cos^{(b+1)} t_{\mathrm{N}}\nonumber\\
\hphantom{\det(Q)=}{} \times\left|
 \begin{matrix}
 1&0&\cdots&0\\
 \psi_{1}^{(1)}(\cos \mathbf{t})&\psi_{1}^{(2)}(\cos \mathbf{t})-\psi_{1}^{(1)}(\cos \mathbf{t})&\cdots&\psi_{1}^{(n)}(\cos \mathbf{t})-\psi_{1}^{(1)}(\cos \mathbf{t})\\
 \vdots&\vdots&\ddots&\vdots\\
 \psi_{n-1}^{(1)}(\cos \mathbf{t})&\psi_{n-1}^{(2)}(\cos \mathbf{t})-\psi_{n-1}^{(1)}(\cos \mathbf{t})& \cdots&\psi_{n-1}^{(n)}(\cos \mathbf{t})-\psi_{n-1}^{(1)}(\cos \mathbf{t})\\
 \end{matrix}
\right|\nonumber\\
\hphantom{\det(Q)}{}=\cos^{(b+1)} t_{\mathrm{N}}\prod_{i=2}^{n} \bigl(\cos^2 t_1-\cos^2 t_i\bigr)\nonumber\\
\hphantom{\det(Q)=}{} \times \left|
 \begin{matrix}
 1&1&\cdots&1\\
 \psi_{1}^{(\{1,2\})}(\cos \mathbf{t})&\psi_{1}^{(\{1,3\})}(\cos \mathbf{t})&\cdots&\psi_{1}^{(\{1,n\})}(\cos \mathbf{t})\\
 \vdots&\vdots&\ddots&\vdots\\
 \psi_{n-2}^{(\{1,2\})}(\cos \mathbf{t})&\psi_{n-2}^{(\{1,3\})}(\cos \mathbf{t})& \cdots&\psi_{n-2}^{(\{1,n\})}(\cos \mathbf{t})\\
 \end{matrix}
\right|,\label{detQfirststep}
\end{gather}
where we let $H=\varnothing$. We calculate \eqref{detQfirststep} by induction using \eqref{detQ} and
\[\det Q=\cos^{(b+1)} t_{\mathrm{N}}\prod_{1\leq i<j\leq n} \bigl(\cos^2 t_i-\cos^2 t_j\bigr).\]
So this lemma is proved.
\end{proof}

\appendix

\section[Calculation of radial part R of the Casimir operator]{Calculation of radial part $\boldsymbol{R}$ of the Casimir operator}\label{app:caloper}
The radial part $R$ of the Casimir operator is arising from an element in the universal enveloping algebra of $\mathfrak{g}$, i.e., the Casimir element. The complex result is given in Warner \cite[Proposition~9.1.2.11]{Warn-2}. The goal of this section is to give the compact type using Casselman and Mili\v{c}i\'{c}~\cite{CassM}. This appendix is a generalization of
\cite[Appendix]{KoelLiu}.
\subsection{Structure theory}
In order to calculate the radial part $R$ of the Casimir operator, we need to calculate another $K$ type which conjugates to block diagonal case. Note that $K=G^\theta$ and
\[\theta(g)=JgJ,\qquad J=\begin{pmatrix} -I_n & 0 \\ 0 & I_m \end{pmatrix}.\]
We define
\[J'=\begin{pmatrix} 0 & 0 & L_n \\ 0 & I_{m-n} & 0 \\ L_n & 0 & 0 \end{pmatrix},\qquad u=\begin{pmatrix} \frac{1}{\sqrt{2}}I_n & 0 & \frac{1}{\sqrt{2}}L_n \\ 0 & I_{m-n} & 0 \\ -\frac{1}{\sqrt{2}}L_n & 0 &\frac{1}{\sqrt{2}}I_n\end{pmatrix}.\]
Then we have $K'=G^{\theta'}=uKu^{\ast}$, where $\theta'(g)=J'gJ'$. Also $\mathfrak{k}'$, $\mathfrak{a}'$, $A'$, and $\mathfrak{m}'$ can be defined analogously. Note that $\mathfrak{m}'=u\mathfrak{m}u^\ast=\mathfrak{m}$.

Now we describe the restricted root system $\mathbf{R}$. Let
$f_i\colon \mathfrak{a}'\to \mathbb{C}$, $1\leq i \leq n$, be defined by
\[f_i \colon\ \begin{pmatrix} D & 0 & 0 \\ 0 & 0 & 0 \\ 0 & 0 &-L_{n}D L_{n}\end{pmatrix} \mapsto z_i,
\qquad D=\operatorname{diag}(z_1,\dots, z_n).\]
Then the identification of $\mathbf{R}$ is given
in Table \ref{fig:rootsystem}.

\begin{table}\renewcommand{\arraystretch}{1.2}\label{fig:rootsystem}\centering
\begin{tabular}{|l|c|l|}
\hline
$\beta\in \mathbf{R}$ & $\dim \mathfrak{g}_\beta$ & $\alpha\in \Phi$ with $\alpha\vert_{\mathfrak{a}'}=\beta$ \\
\hline
$f_i-f_j$, $1\leq i\not=j\leq n$ & $2$ & $\epsilon_i-\epsilon_j$, $\epsilon_{m+n+1-j}-\epsilon_{m+n+1-i}$ \\
\hline
$f_i+f_j$, $1\leq i<j\leq n$ & $2$ & $\epsilon_{i}-\epsilon_{m+n+1-j}$, $\epsilon_{j}-\epsilon_{m+n+n+1-i}$ \\
\hline
$2f_i$, $1\leq i\leq n$ & $1$ & $\epsilon_{i}-\epsilon_{m+n+1-i}$ \\
\hline
$f_i$, $1\leq i\leq n$ & $2(m-n)$ & \makecell[l]{$\epsilon_{i}-\epsilon_{n+j}$, $\epsilon_{n+j}-\epsilon_{m+n+1-i}$,\\ $1\leq j\leq m-n$} \\
\hline
$-f_i-f_j$, $1\leq i\neq j\leq n$ &$2$ & $\epsilon_{m+n+1-i}-\epsilon_{j}$, $\epsilon_{m+n+1-j}-\epsilon_{i}$ \\
\hline
$-2f_i$, $1\leq i\leq n$ & $1$ & $\epsilon_{m+n+1-i}-\epsilon_{i}$ \\
\hline
$-f_i$, $1\leq i\leq n$ & $2(m-n)$ & \makecell[l]{$\epsilon_{n+j}-\epsilon_{i}$, $\epsilon_{m+n+1-i}-\epsilon_{n+j}$,\\ $1\leq j\leq m-n$} \\
\hline
\end{tabular}
\caption{The restricted root system.}
\end{table}

The roots of $\Phi$ not occurring in Table \ref{fig:rootsystem} are zero when
restricted to $\mathfrak{a}'$, i.e., the roots of the form $\epsilon_{n+i}-\epsilon_{n+j}$ for
$1\leq i\not=j \leq m-n$. These roots are contained in $\mathfrak{m}$. Also we have
\[\mathfrak{g} = \mathfrak{a}' \oplus \mathfrak{m} \oplus \bigoplus_{\beta\in \mathbf{R}} \mathfrak{g}_\beta.\]
We define
\[\Delta^{+}=\{\alpha\in\Phi^{+}\mid\alpha|_{\mathfrak{a}'}\neq0\},\qquad\Delta^{-}
=\{\alpha\in\Phi^{-}\mid\alpha|_{\mathfrak{a}'}\neq0\},\qquad\Delta=\Delta^{+}\cup\Delta^{-}.\]
We define $\{ f_i-f_j\mid 1\leq i,j\leq n,\, i\neq j\}\cup\{f_i+f_j \mid 1\leq i,j\leq n,\, i\neq j\}$ as the middle roots, $\{\pm2f_i\}_{i=1}^n$ as the long roots, and $\{\pm f_i\}_{i=1}^n$ as the short roots.

\begin{Remark}
For $m>n$, the restricted root system is of $BC_{n}$ type. For $m=n$, the restricted root system is of $C_{n}$ type.
Also for $m=n$, all the matrices in this section can be written as $2\times 2$ block matrices. Then the restricted root system only includes middle roots and long roots since the dimension of the short root space is $2(m-n)=0$, for $m=n$.
\end{Remark}

In order to calculate the Weyl group of the restricted root system, we need to calculate
$W = N_{K'}(A')/Z_{K'}(A')$. Note that $M=Z_{K'}(A')=Z_K(A)$.

\begin{Lemma}\label{lem:NKa} We have
\begin{gather*}
N_{K'}(A') = \Bigg\{ k= \begin{pmatrix} a & 0 & c L \\ 0 & e & 0 \\ L c & 0 & L a L \end{pmatrix} \in K'
\mid a+c \in \mathcal{P}_n, \\
\hphantom{N_{K'}(A') = \Bigg\{}{}\forall r\in\{1,\dots, n\} \, \exists!
s\in \{1,\dots, n\}\cup \{ m+1,\dots, m+n\}\ \, k_{r,s}\not=0 \Bigg\}.
\end{gather*}
Let $P_i$ be the $n\times n$ permutation matrix corresponding to the
transposition $(i,i+1)$, put
\begin{gather*}
s_i = \begin{pmatrix} P_i & 0 & 0 \\ 0 & I_{m-n} & 0 \\ 0 & 0 & LP_iL \end{pmatrix}, \quad 1\leq i \leq n-1,
\qquad
s_n = \begin{pmatrix} x & 0 & yL \\ 0 & I_{m-n} & 0 \\ Ly & 0 & LxL\end{pmatrix},
\end{gather*}
where $x=\operatorname{diag}(1,\dots, 1,0)$, $y=\operatorname{diag}(0,\dots, 0,1)$ are $(n\times n)$-matrices.
Then the elements $s_k$, $k\in \{1,\dots,n\}$ satisfy the quadratic and the braid
relations
\begin{gather*}
s_i^2 =1,\qquad s_is_j = s_js_i,\quad |i-j|>1, \\
s_{i+1}s_i s_{i+1} = s_i s_{i+1}s_i, \quad 1\leq i < n-1, \qquad
s_{n-1}s_n s_{n-1}s_n = s_ns_{n-1}s_n s_{n-1}.
\end{gather*}
\end{Lemma}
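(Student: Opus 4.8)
The plan is to compute $A'$ explicitly, reduce the determination of $N_{K'}(A')$ to a statement about how $K'$-elements can permute the common eigenspaces of $A'$ on $\mathbb C^{n+m}$, and then check that the listed matrices $s_1,\dots,s_n$ lie in $N_{K'}(A')$ and realise the simple reflections of the restricted root system, from which the quadratic and braid relations follow.

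First I would conjugate the subgroup $A$ of \eqref{atinA} by $u$. Using $\mathfrak a'=u\mathfrak a u^{\ast}$ together with the description of the $f_i$ given just before Table~\ref{fig:rootsystem}, one finds that $A'$ consists of the diagonal matrices $\operatorname{diag}\big(\mathrm e^{\mathrm i t_1},\dots,\mathrm e^{\mathrm i t_n},1,\dots,1,\mathrm e^{-\mathrm i t_n},\dots,\mathrm e^{-\mathrm i t_1}\big)$. Hence $A'$ acts on $\mathbb C^{n+m}=\bigoplus_j\mathbb C e_j$ with character $f_j$ on $\mathbb C e_j$ for $1\le j\le n$, character $-f_j$ on $\mathbb C e_{m+n+1-j}$, and character $0$ on $\mathbb C e_{n+1}\oplus\dots\oplus\mathbb C e_m$. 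Recall also that $M=Z_{K'}(A')=Z_K(A)$ is already known.

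Next I would characterise $N_{K'}(A')$. Since $A'$ is connected, $k\in K'$ normalizes $A'$ iff $\operatorname{Ad}(k)\mathfrak a'=\mathfrak a'$, iff conjugation by $k$ permutes the common eigenspaces of a generic element of $A'$. By the previous step, that eigenspace decomposition consists of the $2n$ lines $\mathbb C e_1,\dots,\mathbb C e_n,\mathbb C e_{m+1},\dots,\mathbb C e_{m+n}$ together with the single $(m-n)$-dimensional space $\mathbb C e_{n+1}\oplus\dots\oplus\mathbb C e_m$, and the matching of the lines must respect the $\pm$ pairing $f_j\leftrightarrow-f_j$. This forces $k$ to preserve $\operatorname{span}(e_{n+1},\dots,e_m)$ — giving the four vanishing off-diagonal blocks in the displayed form — and to act by a generalized (signed) permutation on the remaining $2n$ coordinates; combining this with $\theta'(k)=k$, which ties the bottom-right and lower-left blocks to the upper-left and upper-right ones through $L$, and with $\det k=1$, yields exactly the stated description of $N_{K'}(A')$ with $a+c\in\mathcal P_n$. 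The reverse inclusion is a direct check that any matrix of that shape conjugates $A'$ into itself.

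Finally I would verify that $s_1,\dots,s_{n-1}$ — acting on the relevant $2n$ coordinates as the transpositions $e_i\leftrightarrow e_{i+1}$ and $e_{m+n-i}\leftrightarrow e_{m+n+1-i}$ — and $s_n$ — swapping $e_n\leftrightarrow e_{m+1}$ and fixing everything else — have the required block shape and determinant $1$, hence lie in $N_{K'}(A')$, and that their images in $N_{K'}(A')/M$ are the simple reflections of the restricted root system $\mathbf R$ of Table~\ref{fig:rootsystem} (of type $BC_n$, or $C_n$ when $m=n$). The quadratic and braid relations then either follow from a short direct matrix multiplication in the $2n\times 2n$ corner, the middle $(m-n)\times(m-n)$ block being $I_{m-n}$ throughout, or are inherited from the Coxeter presentation of the Weyl group once the $s_k$ are identified with its standard generators. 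I expect the main obstacle to be the forward direction of the normalizer computation: showing that the eigenspace-permutation condition forces precisely the zero-block pattern and the generalized-permutation structure and nothing more, while correctly bookkeeping the $\theta'$-symmetry, the $L$-twists and the determinant-one constraint; the verification of the relations is routine.
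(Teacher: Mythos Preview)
Your proposal is correct and takes essentially the same approach as the paper, which simply states that the lemma ``can be proved by brute force.'' Your outline spells out what that brute force computation actually entails --- conjugating $A$ to the diagonal form, reading off the eigenspace decomposition, and deducing the block structure from the requirement that $k$ permute these eigenspaces compatibly with $\theta'$ --- so in fact you have provided more detail than the paper does.
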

\begin{proof}
It can be proved by brute force.
\end{proof}

Note that the Weyl group for the restricted root system corresponds to
the hyperoctahedral group, i.e., the wreath product
$S_n \wr \mathbb{Z}_2 = S_n \ltimes \mathbb{Z}^n_2$. The Dynkin diagram is given in Figure \ref{fig:Dynkindiagram}.

\begin{figure}[h]\centering
\begin{tikzpicture}[scale=0.7]
\draw [ultra thick] (6,4) circle [radius=0.2];
\draw [ultra thick] (8,4) circle [radius=0.2];
\draw [ultra thick] (11,4) circle [radius=0.2];
\draw [ultra thick] (13,4) circle [radius=0.2];
\draw [ultra thick] (15,4) circle [radius=0.2];
\draw [ultra thick] (17,4) circle [radius=0.2];
\draw[very thick] (6.2,4) to [out=0,in=180] (7.8,4);
\draw[dashed, very thick] (8.2,4) to [out=0,in=180] (10.8,4);
\draw[very thick] (11.2,4) to [out=0,in=180] (12.8,4);
\draw[very thick] (13.2,4) to [out=0,in=180] (14.8,4);
\draw[very thick] (15,4.2) to [out=0,in=180] (17,4.2);
\draw[very thick] (15,3.8) to [out=0,in=180] (17,3.8);
\end{tikzpicture}
\caption{Dynkin diagram with $n$ nodes for the restricted Weyl group.}\label{fig:Dynkindiagram}
\end{figure}
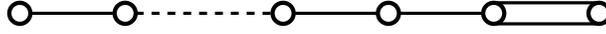

\subsection[Radial part R of the Casimir operator]{Radial part $\boldsymbol{R}$ of the Casimir operator}\label{app:rofcasimiroperator}

We recall the Casimir element in the universal enveloping algebra $\mathfrak{U}(\mathfrak{g})$ and it can be written as
\[\Omega=\sum X_{i}\widetilde{X}_{i},\]
where $X_{i}$ is the basis of $\mathfrak{g}$ and $\widetilde{X}_{i}$ is the corresponding dual basis. We define the dual basis by the Killing form $B(X,Y)=\operatorname{Tr}(XY)$.

We use the orthogonal decomposition
\[\mathfrak{g}=\mathfrak{m}\oplus\mathfrak{a}'\oplus\mathfrak{n}.\]
In this equation, $\mathfrak{n}$ is spanned by the root space vector corresponding to the roots in $\Delta$.

Note that $B|_{\mathfrak{m}\times\mathfrak{m}}$ and $B|_{\mathfrak{a}'\times\mathfrak{a}'}$ are non-degenerate. Moreover, denoting the Casimir element of $\mathfrak{m}$, respectively $\mathfrak{a}'$, by $\Omega_{\mathfrak{m}}$, respectively $\Omega_{\mathfrak{a}'}$, we have
\[\Omega=\Omega_{\mathfrak{m}}+\Omega_{\mathfrak{a}'}
+\sum_{\alpha\in\Delta^{+}} (Y_{\alpha}Y_{-\alpha}+Y_{-\alpha}Y_{\alpha})\]
and for $\epsilon_{i}-\epsilon_{j}\in\Delta$, we define $Y_{\epsilon_{i}-\epsilon_{j}}=E_{ij}$ which spans the root space $\mathfrak{g}_{\epsilon_{i}-\epsilon_{j}}$.

For $\alpha\in\Delta$, we define
\[X_{\alpha}=Y_{\alpha}+Y_{\theta\alpha}.\]
We have
\begin{align*}
\Omega=\Omega_{\mathfrak{m}}+\frac{1}{2}\sum_{i=1}^n H_{ii}H_{ii}
&-\sum_{\alpha\in\Delta^{+}} \frac{1}{(\alpha(a)-\alpha(a)^{-1})^{2}}
(2(X_{\alpha}^{a}X_{-\alpha}^{a}+X_{\alpha}X_{-\alpha})\\
&-\bigl(\alpha(a)+\alpha(a)^{-1}\bigr)(2X_{\alpha}^{a}X_{-\alpha}-(\alpha(a)-\alpha(a)^{-1})H_{\alpha|_{\mathfrak{a}'}}))
\end{align*}
by calculation.

We put
\[\mathfrak{A}=\mathfrak{U}(\mathfrak{a}')\otimes\mathfrak{U}(\mathfrak{k}')
{\otimes}_{\mathfrak{U}(\mathfrak{m})}\mathfrak{U}(\mathfrak{k}'),\]
then we have
\begin{Lemma}[{\cite[Theorem 2.4]{CassM}}]\label{casielementtocasioperator}
Let $a\in (A')^\mathbb{C}$ such that $\alpha(a)\neq\pm1$ for $\alpha\in\Delta$, and $F\colon (A')^{\mathbb{C}}\to\operatorname{End}\big(V_\mu^K\big)$, then
\[\Gamma_{a}\colon \ \mathfrak{A}\rightarrow \mathfrak{U}(\mathfrak{g}),\]
where
\[\Gamma_{a}(H\otimes X\otimes Y)=X^{a}HY\]
is a linear isomorphism. Moreover, $H\otimes X\otimes Y$ acts on the function $F(a)$ by
\[(H\otimes X\otimes Y)F(a)=\pi_\mu^K(X)\frac{{\rm d}}{{\rm d}z}\Big|_{z=0}F(a\exp(zH))\pi_\mu^K(Y).\]
\end{Lemma}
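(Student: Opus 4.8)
This lemma is \cite[Theorem~2.4]{CassM}; here is the idea in the notation above. There are two assertions to prove: that $\Gamma_a$ is a linear isomorphism, and the differentiation formula for the action on $F(a)$. The first task is a ``generic'' vector-space decomposition of $\mathfrak{g}$. Since $M=Z_{K'}(A')$, the element $a\in(A')^{\mathbb{C}}$ centralizes $\mathfrak{m}$, so $\operatorname{Ad}(a)$ fixes $\mathfrak{m}$ pointwise and $\mathfrak{m}$ commutes with $\mathfrak{a}'$. For a restricted root $\beta$, the subspace $\mathfrak{k}'\cap(\mathfrak{g}_\beta\oplus\mathfrak{g}_{-\beta})$ consists of the vectors $X+\theta' X$ with $X\in\mathfrak{g}_\beta$, and $\operatorname{Ad}(a)$ sends such a vector to $\beta(a)X+\beta(a)^{-1}\theta' X$, which is again $\theta'$-fixed if and only if $\beta(a)^2=1$. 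Hence the hypothesis $\alpha(a)\neq\pm1$ for $\alpha\in\Delta$ is exactly what forces $\operatorname{Ad}(a)\mathfrak{k}'\cap\mathfrak{k}'=\mathfrak{m}$. Choosing a vector-space complement $\mathfrak{q}$ of $\mathfrak{m}$ in $\mathfrak{k}'$, a dimension count based on $\mathfrak{g}=\mathfrak{k}'\oplus\mathfrak{p}'=\mathfrak{m}\oplus\mathfrak{a}'\oplus\mathfrak{n}$ (using $\dim\mathfrak{k}'-\dim\mathfrak{m}=\dim\mathfrak{p}'-\dim\mathfrak{a}'$) then upgrades this to an internal direct sum of subspaces $\mathfrak{g}=\operatorname{Ad}(a)\mathfrak{q}\oplus\mathfrak{a}'\oplus\mathfrak{m}\oplus\mathfrak{q}$.

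Given this, I would prove bijectivity of $\Gamma_a$ by matching Poincar\'e--Birkhoff--Witt bases. Ordering the decomposition of the previous paragraph as $\operatorname{Ad}(a)\mathfrak{q}$, then $\mathfrak{a}'$, then $\mathfrak{m}$, then $\mathfrak{q}$, PBW produces a basis of $\mathfrak{U}(\mathfrak{g})$ by the corresponding ordered monomials. On the other side, $\mathfrak{U}(\mathfrak{k}')$ is free, as a left (resp.\ right) $\mathfrak{U}(\mathfrak{m})$-module, on the ordered monomials in a chosen basis of $\mathfrak{q}$, so $\mathfrak{A}=\mathfrak{U}(\mathfrak{a}')\otimes\big(\mathfrak{U}(\mathfrak{k}')\otimes_{\mathfrak{U}(\mathfrak{m})}\mathfrak{U}(\mathfrak{k}')\big)$ has a basis of tensors $H\otimes X\otimes Y$ with $H$ an $\mathfrak{a}'$-monomial, $X$ a $\mathfrak{q}$-monomial (a coset representative, the $\mathfrak{m}$-part having been balanced to the right), and $Y$ an $\mathfrak{m}$-then-$\mathfrak{q}$ monomial in $\mathfrak{U}(\mathfrak{k}')$. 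Since $\operatorname{Ad}(a)$ fixes $\mathfrak{m}$ and $\mathfrak{m}$ commutes with $\mathfrak{a}'$, the balancing over $\mathfrak{U}(\mathfrak{m})$ is respected, and $\Gamma_a(H\otimes X\otimes Y)=X^aHY$ becomes the ordered product (monomial in $\operatorname{Ad}(a)\mathfrak{q}$)(monomial in $\mathfrak{a}'$)(monomial in $\mathfrak{m}$)(monomial in $\mathfrak{q}$); thus $\Gamma_a$ carries this basis of $\mathfrak{A}$ bijectively onto the PBW basis of $\mathfrak{U}(\mathfrak{g})$ just described, and is therefore a linear isomorphism. (Alternatively one checks only surjectivity, which is immediate from a PBW filtration argument since $\operatorname{Ad}(a)\mathfrak{k}'+\mathfrak{a}'+\mathfrak{k}'=\mathfrak{g}$, and then invokes equality of graded dimensions.)

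For the differentiation formula I would transport a matrix spherical function $\Phi$ to the $K'$-picture, so that $\Phi(k_1gk_2)=\pi_\mu^K(k_1)\Phi(g)\pi_\mu^K(k_2)$ and $F=\Phi|_{(A')^{\mathbb{C}}}$, and let $\mathfrak{U}(\mathfrak{g})$ act by left-invariant differential operators. For $Y\in\mathfrak{k}'$ the right-equivariance gives $(Y\Phi)(g)=\Phi(g)\pi_\mu^K(Y)$ for every $g$, hence $(Y\Phi)(a)=F(a)\pi_\mu^K(Y)$, and this persists for $Y\in\mathfrak{U}(\mathfrak{k}')$. For $H\in\mathfrak{a}'$ one gets the radial derivative $(H\Phi)(a)=\frac{{\rm d}}{{\rm d}z}\Big|_{z=0}F(a\exp(zH))$. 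For the left factor one uses the conjugation identity $a\exp(zX^a)=\exp(zX)\,a$ (with $X^a=\operatorname{Ad}(a^{-1})X$), which turns the right translation by $X^a$ at $a$ into the left translation by $X$; since $HY\Phi$ is still left-equivariant under $\pi_\mu^K$, this yields $(X^a\Psi)(a)=\pi_\mu^K(X)\Psi(a)$ with $\Psi=HY\Phi$, extending to $X\in\mathfrak{U}(\mathfrak{k}')$. Composing the three factors on a term $X^aHY$ reproduces $\pi_\mu^K(X)\frac{{\rm d}}{{\rm d}z}\Big|_{z=0}F(a\exp(zH))\pi_\mu^K(Y)$.

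The main obstacle is the first half: verifying that $\alpha(a)\neq\pm1$ is precisely what makes $\operatorname{Ad}(a)\mathfrak{k}'\cap\mathfrak{k}'=\mathfrak{m}$, hence the four-term sum direct, and then keeping the $\mathfrak{U}(\mathfrak{m})$-balanced tensor product straight so that the two PBW bases correspond under $\Gamma_a$. Once that is in place the action formula is routine, amounting only to bookkeeping of the left/right conjugation identities and the bi-equivariance of $\Phi$.
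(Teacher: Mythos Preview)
The paper does not supply its own proof of this lemma: it is quoted verbatim as \cite[Theorem~2.4]{CassM} and used as a black box, so there is no argument to compare against. Your sketch is a correct outline of the standard Casselman--Mili\v{c}i\'{c} proof: the regularity hypothesis $\alpha(a)\neq\pm1$ forces $\operatorname{Ad}(a)\mathfrak{k}'\cap\mathfrak{k}'=\mathfrak{m}$, whence the four-summand decomposition and the PBW matching establish that $\Gamma_a$ is bijective, while the action formula follows from the bi-$K'$-equivariance of $\Phi$ together with the identity $a\exp(zX^{a})=\exp(zX)a$.
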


\begin{Remark}\label{rem:campactOmega}
We restrict $a\in (A')^\mathbb{C}$ to the compact type such that
\[a=\operatorname{diag}\bigl({\rm e}^{\mathrm{i}t_1},\dots, {\rm e}^{\mathrm{i}t_{n}}, \underbrace{1,\dots, 1}_{m-n}, {\rm e}^{-\mathrm{i}t_n},\dots, {\rm e}^{-\mathrm{i}t_{1}}\bigr),\qquad t_i\in\mathbb{R},\]
then we have
\[(H\otimes X\otimes Y)F(a)=\mathrm{i}\pi_\mu^K(X)\frac{{\rm d}}{{\rm d}t}\Big|_{t=0}F(a\exp(tH))\pi_\mu^K(Y).\]
\end{Remark}

We separate the radial part $R$ of the Casimir operator, i.e., $\Gamma_{a}^{-1}(\Omega)$, into five parts which are $M$-scalar part, second order differential operator part, short root part, middle root part, and long root part. We have
\begin{align*}
(RF)(a_{\mathbf{t}})&=\bigl(\Gamma_{a}^{-1}\Omega\bigr)(F)(a_{\mathbf{t}})\\
&=\pi_\mu^K(\Omega_{\mathfrak{m}})F(a_{\mathbf{t}})
-\frac{1}{2}\sum_{j=1}^{n}\frac{\partial^{2}}{\partial t_{j}^{2}}F(a_{\mathbf{t}})+(R_{s}F)(a_{\mathbf{t}})+(R_{m}F)(a_{\mathbf{t}})+(R_{l}F)(a_{\mathbf{t}}).
\end{align*}
In this operator, the short root part is
\begin{align*}
(R_{s}F)(a_{\mathbf{t}})={}&\sum_{1\leq j\leq n} \sum_{\substack{\alpha\in\Delta^{+}\\ \alpha|_{A}=\beta_{j}}} \frac{1}{2\sin^{2} t_{j}}
\biggl(\bigl(\pi_\mu^K(X_{\alpha})\pi_\mu^K(X_{-\alpha})F(a_{\mathbf{t}})+F(a_{\mathbf{t}})\pi_\mu^K(X_{\alpha})\pi_\mu^K(X_{-\alpha})\bigr)
\\
&{}-\cos t_{j}\biggl(2\pi_\mu^K(X_{\alpha})F(a_{\mathbf{t}})\pi_\mu^K(X_{-\alpha})+\sin t_{j}\frac{\partial}{\partial t_{j}}F(a_{\mathbf{t}})\biggr)\biggr),
\end{align*}
the middle root part is
\begin{align*}
(R_{m}F)(a_{\mathbf{t}})={}&\sum_{1\leq j<k \leq n} \sum_{\substack{\alpha\in\Delta^{+}\\ \alpha|_{A}=\beta_{j}-\beta_{k}}} \frac{1}{2\sin^{2}(t_{j}-t_{k})}
\bigg(\bigl(\pi_\mu^K(X_{\alpha})\pi_\mu^K(X_{-\alpha})F(a_{\mathfrak{t}})\\
&{}+F(a_{\mathfrak{t}})\pi_\mu^K(X_{\alpha})\pi_\mu^K(X_{-\alpha})\bigr)-\cos(t_{j}-t_{k})\bigg(2\pi_\mu^K(X_{\alpha})F(a_{\mathfrak{t}})\pi_\mu^K(X_{-\alpha})
\\
&{}+\sin(t_{j}-t_{k})\bigg(\frac{\partial}{\partial t_{j}}-\frac{\partial}{\partial t_{k}}\bigg)F(a_{\mathfrak{t}})\bigg)\bigg)\\
&{}+\sum_{1\leq j<k \leq n} \sum_{\substack{\alpha\in\Delta^{+}\\ \alpha|_{A}=\beta_{j}+\beta_{k}}} \frac{1}{2\sin^{2}(t_{j}+t_{k})}
\bigg(\big(\pi_\mu^K(X_{\alpha})\pi_\mu^K(X_{-\alpha})F(a_{\mathfrak{t}})\\
&{}+F(a_{\mathfrak{t}})\pi_\mu^K(X_{\alpha})\pi_\mu^K(X_{-\alpha})\big)
-\cos(t_{j}+t_{k})\bigg(2\pi_\mu^K(X_{\alpha})F(a_{\mathfrak{t}})\pi_\mu^K(X_{-\alpha})
\\
&{}+\sin(t_{j}+t_{k})\bigg(\frac{\partial}{\partial t_{j}}+\frac{\partial}{\partial t_{k}}\bigg)F(a_{\mathfrak{t}})\bigg)\bigg),
\end{align*}
and the long root part is
\begin{align*}
(R_{l}F)(a_{\mathbf{t}})={}&\sum_{1\leq j\leq n} \sum_{\substack{\alpha\in\Delta^{+}\\ \alpha|_{A}=2\beta_{j}}} \frac{1}{2\sin^{2}(2t_{j})}
 \bigg(\big(\pi_\mu^K(X_{\alpha})\pi_\mu^K(X_{-\alpha})F(a_{\mathfrak{t}})+F(a_{\mathfrak{t}})\pi_\mu^K(X_{\alpha})\pi_\mu^K(X_{-\alpha})\big)\\
&{}-\cos(2t_{j})\bigg(2\pi_\mu^K(X_{\alpha})F(a_{\mathfrak{t}})\pi_\mu^K(X_{-\alpha})+\sin(2t_{j})
\frac{2\partial}{\partial t_{j}}F(a_{\mathfrak{t}})\bigg)\bigg).
\end{align*}
\begin{Remark}
For $\mu=\omega_s+b\omega_n$, we have
\[\pi_\mu^K(\Omega_{\mathfrak{m}}) F(a_{\mathbf{t}})
=-\frac{2 {{s}^{2}}+(( 2 b-1) n+( -2 b-1) m) s+{{b}^{2}}{{n}^{2}}-{{b}^{2}} m n}{2 n+2 m}F(a_\mathbf{t}).\]
\end{Remark}

\subsection{Special cases}\label{app:casimirop}
The proof of Lemma \ref{lem:rqi} is a lengthy, but explicit calculation, which has been done with the help of computer algebra, in particular \textsc{Maxima}. We introduce some preliminaries in this section for proving Lemma \ref{lem:rqi}.

For $H\subset\{1,2,\dots,n\}$ with $|H|=s$, we define
\[\psi_i^{(H)}(\cos\mathbf{t})=\sum_{\substack{|I|=i\\I\subset \mathrm{N}\backslash H}} \cos^{2}t_{I},\qquad i=0,1,\dots,n-s,\]
then $\psi_i^{\varnothing}=\psi_i$. In this case, we rewrite $\psi_i^{(k)}$ as $\psi_i^{(\{k\})}$.

We have
\begin{equation}\label{symmetripolynomialrelation}
\psi_{i}^{(H\cup \{j\})}(\cos\textbf{t})+\cos^{2}t_{j}\psi_{i-1}^{(H\cup \{j\})}(\cos\textbf{t})=\psi_{i}^{(H)}(\cos\textbf{t}).
\end{equation}
It leads to
\begin{gather}
\sum_{j\in \mathrm{N}\backslash H} \psi_{i}^{(H\cup\{j\})}(\cos\textbf{t})=(n-s-i)\psi^{(H)}_{i}(\cos\textbf{t}),\nonumber\\ \sum_{j\in \mathrm{N}\backslash H} \cos^{2}t_{j}\psi_{i-1}^{(H\cup\{j\})}(\cos\textbf{t})=i\psi_{i}^{(H)}(\cos\textbf{t}).\label{symmetripolynomialrelation2}
\end{gather}

Note that \eqref{symmetripolynomialrelation} follows by a direct calculation or by use of the generating function, see \cite[Section~I.2]{Mac}, for the elementary symmetric functions. By \eqref{symmetripolynomialrelation} and differentiating the generating function, we get \eqref{symmetripolynomialrelation2}.

By using \eqref{symmetripolynomialrelation} twice, we have
\begin{align*}
\psi_{i}^{(H)}(\cos\mathbf{t})={}&\cos^{2}t_{j}\cos^{2}t_{k}\sum_{\substack{|I|=i-2\\I\subset \mathrm{N}\backslash (H\cup\{j,k\})}} \cos^{2}t_{I}+(\cos^{2}t_{j}+\cos^{2}t_{k})\sum_{\substack{|I|=i-1\\I\subset \mathrm{N}\backslash (H\cup\{j,k\})}} \cos^{2}t_{I}
\\&{}+\sum_{\substack{|I|=i\\I\subset \mathrm{N}\backslash (H\cup\{j,k\})}} \cos^{2}t_{I}\nonumber\\
={}&\cos^{2}t_{j}\cos^{2}t_{k}\psi_{i-2}^{(H\cup\{j,k\})}(\cos\mathbf{t})\\
&{}+(\cos^{2}t_{j}+\cos^{2}t_{k})\psi_{i-1}^{(H\cup\{j,k\})}(\cos\mathbf{t})+\psi_{i}^{(H\cup\{j,k\})}(\cos\mathbf{t}).
\end{align*}

\subsection*{Acknowledgements}

We thank Erik Koelink and Maarten van Pruijssen for useful discussions and feedback, and we also thank the anonymous referees for contributing to improve the paper.


\pdfbookmark[1]{References}{ref}
\LastPageEnding

\end{document}